\newcommand{\leqnomode}{\tagsleft@true}
\newcommand{\reqnomode}{\tagsleft@false}
\theoremstyle{plain}
\newtheorem{thm}{\protect\theoremname}[section]
\theoremstyle{plain}
\newtheorem{lem}[thm]{\protect\lemmaname}
\theoremstyle{plain}
\newtheorem{cor}[thm]{\protect\corollaryname}
\theoremstyle{plain}
\newtheorem{prop}[thm]{\protect\propositionname}
\theoremstyle{definition}
\newtheorem{problem}[thm]{\protect\problemname}
\theoremstyle{plain}
\newtheorem{conjecture}[thm]{\protect\conjecturename}
\renewcommand{\S}{\mathfrak{S}}
\renewcommand{\P}{\mathcal{P}}
\newcommand{\Q}{\mathcal{Q}}
\newcommand{\pl}{\tilde p}
\newcommand{\ql}{\tilde q}
\newcommand{\Cl}{\tilde C}
\newcommand{\Fl}{\tilde F}
\DeclareMathOperator{\des}{des}
\DeclareMathOperator{\pk}{pk}
\DeclareMathOperator{\lpk}{lpk}
\DeclareMathOperator{\plat}{plat}
\DeclareMathOperator{\ascplat}{ascplat}
\DeclareMathOperator{\lascplat}{lascplat}
\DeclareMathOperator{\lplat}{lplat}
\DeclareMathOperator{\ides}{ides}
\DeclareMathOperator{\ipk}{ipk}
\DeclareMathOperator{\ilpk}{ilpk}
\DeclareMathOperator{\occ}{occ}
\DeclareMathOperator{\mk}{mk}
\DeclareMathOperator{\std}{std}
\DeclareMathOperator{\inv}{inv}
\DeclareMathOperator{\Fib}{Fib}
\let\originalleft\left
\let\originalright\right
\renewcommand{\left}{\mathopen{}\mathclose\bgroup\originalleft}
\renewcommand{\right}{\aftergroup\egroup\originalright}
\title{Statistics on clusters and $r$-Stirling permutations}
\author[1]{Sergi Elizalde\thanks{\tt{sergi.elizalde@dartmouth.edu}}}
\author[2]{Justin M. Troyka\thanks{\tt{jtroyka@calstatela.edu}}}
\author[3]{Yan Zhuang\thanks{\tt{yazhuang@davidson.edu}}}
\affil[1]{Department of Mathematics, Dartmouth College} 
\affil[2]{Department of Mathematics, California State University, Los Angeles}
\affil[3]{Department of Mathematics and Computer Science, Davidson College}
\date{\vspace{-0.15in}May 17, 2023}
\providecommand{\corollaryname}{Corollary}
\providecommand{\lemmaname}{Lemma}
\providecommand{\theoremname}{Theorem}
\providecommand{\propositionname}{Proposition}
\providecommand{\problemname}{Problem}
\providecommand{\conjecturename}{Conjecture}
\begin{document}

\maketitle

\begin{abstract}
The Goulden--Jackson cluster method, adapted to permutations by Elizalde and Noy, reduces the problem of counting permutations by occurrences of a prescribed consecutive pattern to that of counting clusters, which are special permutations with a lot of structure. Recently, Zhuang found a generalization of the cluster method which specializes to refinements by additional permutation statistics, namely the inverse descent number $\ides$, the inverse peak number $\ipk$, and the inverse left peak number $\ilpk$. Continuing this line of work, we study the enumeration of $2134\cdots m$-clusters by $\ides$, $\ipk$, and $\ilpk$, which allows us to derive formulas for counting permutations by occurrences of the consecutive pattern $2134\cdots m$ jointly with each of these statistics. Analogous results for the pattern $12\cdots (m-2)m(m-1)$ are obtained via symmetry arguments. Along the way, we discover that $2134\cdots (r+1)$-clusters are equinumerous with $r$-Stirling permutations introduced by Gessel and Stanley, and we establish some joint equidistributions between these two families of permutations.
\end{abstract}
\textbf{\small{}Keywords: }{\small{}consecutive pattern, cluster, descent, peak, $r$-Stirling permutation, plateau}{\let\thefootnote\relax\footnotetext{2020 \textit{Mathematics Subject Classification}. Primary 05A05; Secondary 05A15, 05A16.}}

%\tableofcontents

\section{Introduction}

Let $\mathfrak{S}_{n}$ denote the symmetric group of permutations of the set $[n]\coloneqq\{1,2,\dots,n\}$. We write permutations in one-line notation---that is, $\pi=\pi_{1}\pi_{2}\cdots\pi_{n}$---and call the $\pi_{i}$ \textit{letters} of $\pi$. The \textit{length} of $\pi$ is the number of letters in $\pi$, so that $\pi$ has length $n$ whenever $\pi\in\mathfrak{S}_{n}$.

The present paper is motivated by the study of consecutive patterns in permutations. Given a sequence $w$ of distinct integers, the \textit{standardization} of $w$---denoted $\std(w)$---is defined to be the permutation obtained by replacing the smallest letter of $w$ with 1, the second smallest letter with 2, and so on. For example, we have $\std(51629)=31425$. Given permutations $\pi\in\mathfrak{S}_{n}$ and $\sigma\in\mathfrak{S}_{m}$, we say that $\pi$ \textit{contains} $\sigma$ (as a \textit{consecutive pattern}) if $\std(\pi_{i}\pi_{i+1}\cdots\pi_{i+m-1})=\sigma$ for some $i\in[n-m+1]$; when this occurs, we call $\pi_{i}\pi_{i+1}\cdots\pi_{i+m-1}$ an \textit{occurrence} of $\sigma$ (as a consecutive pattern) in $\pi$. For instance, the permutation $162437598$ has two occurrences of the consecutive pattern $213$---namely $437$ and $759$---whereas the permutation $672489153$ has no occurrences of $213$.

Let $\occ_{\sigma}(\pi)$ denote the number of occurrences of $\sigma$ in $\pi$. If $\occ_{\sigma}(\pi)=0$, then we say that $\pi$ \textit{avoids} $\sigma$ (as a consecutive pattern). We let $\mathfrak{S}_{n}(\sigma)$ denote the subset of permutations in $\mathfrak{S}_{n}$ avoiding $\sigma$. As observed above, we have $672489153\in\mathfrak{S}_{9}(213)$. For the remainder of this paper, the notions of occurrence and avoidance of patterns in permutations always refer to consecutive patterns unless otherwise stated.

The study of consecutive patterns in permutations was initiated by Elizalde and Noy \cite{Elizalde2003} in 2003, as a variation of classical patterns in permutations, whose systematic study originated in the seminal paper of Simion and Schmidt \cite{Simion1985}. Among the standard tools used in the study of consecutive patterns is the cluster method of Goulden and Jackson. In its original form, the Goulden\textendash Jackson cluster method \cite{Goulden1979} provides a very general formula expressing the generating function for words by occurrences of prescribed subwords in terms of a ``cluster generating function'', which is easier to compute. In 2012, Elizalde and Noy \cite{Elizalde2012} adapted the Goulden\textendash Jackson cluster method to the setting of permutations, which in a similar vein reduces the problem of counting permutations by occurrences of a prescribed consecutive pattern $\sigma$ to that of counting simpler permutations called $\sigma$-clusters.

Work in recent years have led to refinements and variations of the cluster method which keep track of additional permutation statistics. The first result in this direction is a $q$-analogue which provides a refinement by the inversion number $\inv$. This $q$-cluster method first appeared in a survey on consecutive patterns by Elizalde \cite{Elizalde2016}, and was used by Crane, DeSalvo, and Elizalde \cite{Crane2018} in their study of the Mallows distribution. In short, the $q$-cluster method allows us to count permutations by the number of occurrences of $\sigma$ jointly with the statistic $\inv$ if we can count $\sigma$-clusters by $\inv$. 

Subsequently, Zhuang proved a lifting of the cluster method to the Malvenuto\textendash Reutenauer algebra \cite{Zhuang2021}, which we will sometimes refer to as the {\em generalized cluster method}, since it specializes to both Elizalde and Noy's cluster method for permutations and its $q$-analogue upon applying standard homomorphisms. By constructing and applying other homomorphisms, Zhuang obtained further specializations which keep track of other permutation statistics: the \textit{inverse descent number} $\ides$, the \textit{inverse peak number} $\ipk$, and the \textit{inverse left peak number} $\ilpk$. Similar to the $q$-cluster method, these specializations allow us to count permutations by the number of occurrences of $\sigma$ jointly with $\ides$, $\ipk$, or $\ilpk$ if we can count $\sigma$-clusters by the corresponding statistic. This was done in \cite{Zhuang2021} for the monotone patterns $\sigma=12\cdots m$ and $\sigma=m\cdots21$, as well as the patterns $\sigma=12\cdots(a-1)(a+1)a(a+2)(a+3)\cdots m$ where $m\geq5$ and $2\leq a\leq m-2$. Both of these families of patterns are examples of \textit{chain patterns} (see \cite[Section 2.2]{Elizalde2012} for the definition), which is a restrictive condition that makes their clusters more amenable to study. The latter family in particular consists of elementary transpositions, i.e., transpositions of the form $(a,a+1)$; for this reason, we call such patterns \textit{transpositional patterns}. In fact, the work in \cite{Zhuang2021} covers all transpositional patterns except for $1324$ and those of the form $2134\cdots m$ and $12\cdots (m-2)m(m-1)$ for $m\geq3$.

The primary goal of the present paper is to study the enumeration of $2134\cdots m$-clusters by the statistics $\ides$, $\ipk$, and $\ilpk$; via the cluster method, this will allow us to count permutations by occurrences of $2134\cdots m$ jointly with these statistics. By way of reverse-complementation symmetry, we will be able to derive analogous results for the pattern $12\cdots (m-2)m(m-1)$ as well.

\subsection{Inverse \texorpdfstring{$2134\cdots m$}{2134...m}-clusters, $r$-Stirling permutations,
and statistics}

Before describing our work further, let us give some additional definitions and establish notation for our main objects of study. We begin with several permutation statistics: the \textit{descent number} $\des$, the \textit{peak number} $\pk$, the \textit{left peak number} $\lpk$, and their corresponding inverse statistics $\ides$, $\ipk$, and $\ilpk$.
\begin{itemize}
\item We call $i\in[n-1]$ a \textit{descent} of $\pi\in\mathfrak{S}_{n}$ if $\pi_{i}>\pi_{i+1}$. Let $\des(\pi)$ be the number of descents of $\pi$, and $\ides(\pi)\coloneqq\des(\pi^{-1})$ the number of descents of the inverse of $\pi$.
\item We call $i\in\{2,3,\dots,n-1\}$ a \textit{peak} of $\pi\in\mathfrak{S}_{n}$ if $\pi_{i-1}<\pi_{i}>\pi_{i+1}$. Let $\pk(\pi)$ be the number of peaks of $\pi$, and $\ipk(\pi)\coloneqq\pk(\pi^{-1})$.
\item We call $i\in[n-1]$ a \textit{left peak} of $\pi\in\mathfrak{S}_{n}$ if $i$ is a peak of $\pi$, or if $i=1$ and $i$ is a descent of $\pi$. Let $\lpk(\pi)$ be the number of left peaks of $\pi$ (equivalently, the number of peaks of the sequence $0\pi$ obtained by prepending 0 to $\pi$), and $\ilpk(\pi)\coloneqq\ilpk(\pi^{-1})$.
\end{itemize}
As an example, if we have $\pi=4351627$, then $\des(\pi)=3$, $\pk(\pi)=2$, and $\lpk(\pi)=3$. Furthermore, the inverse of $\pi$ is $\pi^{-1}=4621357$, so $\ides(\pi)=2$ and $\ipk(\pi)=\ilpk(\pi)=1$.

Let $m\geq3$ and $k\geq1$. We say that a permutation $\pi\in\mathfrak{S}_{(m-1)k+1}$ is a \textit{$2134\cdots m$-cluster} if $\std(\pi_{i}\pi_{i+1}\pi_{i+m-1})=2134\cdots m$ for all $i\in\{\,(m-1)j+1:0\leq j\leq k-1\,\}$. (We will define clusters in full generality in Section \ref{sec:clusters-def}.) Informally, we can think of a \textit{$2134\cdots m$-cluster} as a permutation consisting of occurrences of $2134\cdots m$, each overlapping with the next one. Clearly, the distributions of $\ides$, $\ipk$, and $\ilpk$ over $2134\cdots m$-clusters are the same as the distributions of $\des$, $\pk$, and $\lpk$ over inverses of $2134\cdots m$-clusters, and it will be easier for us to work with the latter. So, let us define
\[
\P_{r,k}\coloneqq\{\,\pi\in\mathfrak{S}_{rk+1}:\pi^{-1}\text{ is a }2134\cdots(r+1)\text{-cluster}\,\};
\]
the reason for the change from $m$ to $r+1$ will be apparent soon. Continuing the example from the previous paragraph, we have that $\pi=4351627$ is a $213$-cluster, so $\pi^{-1}=4621357$ is a permutation in ${\cal P}_{2,3}$.

Although the study of $2134\cdots m$-clusters was the original goal of this project, our work led us to discover connections between these objects and $r$-Stirling permutations. An \textit{$r$-Stirling permutation}\footnote{Sometimes $r$-Stirling permutations are called $r$\textit{-multipermutations}, such as by Park \cite{Park1994a, Park1994}.} of \textit{order} $k$ is a permutation $\rho=\rho_{1}\rho_{2}\cdots\rho_{rk}$ of the multiset $\{1^{r},2^{r},\dots,k^{r}\}$ (where the superscript denotes multiplicity) such that if $\rho_{i}=\rho_{j}$ and $i<j$, then $\rho_{k}\geq\rho_{i}$ for every $i<k<j$. For example, $\rho=112223444331$ is a 3-Stirling permutation of order 4, but $123244433211$ is not because there is a 2 between two instances of the larger letter 3. Note that 1-Stirling permutations are ordinary permutations; 2-Stirling permutations are simply called \textit{Stirling permutations} and were first studied by Gessel and Stanley in \cite{Gessel1978}, who also proposed the notion of $r$-Stirling permutations for any $r$. There is now a sizable literature on $r$-Stirling permutations and their statistics; see the works listed in \cite{Gessel2020a}.

Let $\Q_{r,k}$ denote the set of $r$-Stirling permutations of order $k$. We define the following statistics on $\rho\in\Q_{r,k}$.
\begin{itemize}
\item We call $i\in[rk-1]$ a \textit{plateau} of $\rho$ if $\rho_{i}=\rho_{i+1}$. Define $\plat(\rho)$ to be the number of plateaus of $\rho$.
\item We call $i\in[rk-1]$ a \textit{leading plateau}\footnote{Leading plateaus are also called \textit{1-plateaus} by Janson, Kuba, and Panholzer \cite{Janson2011}.}  of $\rho\in\Q_{r,k}$ if $i$ is a plateau of $\rho$ and $\rho_{j}\neq\rho_{i}$ for all $j<i$. Define $\lplat(\rho)$ to be the number of leading plateaus of $\rho$.
\item We call $i\in\{2,3,\dots,rk-1\}$ an \textit{ascent-plateau} of $\rho\in\Q_{r,k}$ if $\rho_{i-1}<\rho_{i}=\rho_{i+1}$. Define $\ascplat(\rho)$ to be the number of ascent-plateaus of $\rho$.
\item We call $i\in[rk-1]$ a \textit{left ascent-plateau} of $\rho\in\Q_{r,k}$ if $i$ is an ascent-plateau of $\pi$, or if $i=1$ and $i$ is a plateau of $\pi$. Define $\lascplat(\rho)$ to be the number of left ascent-plateaus of $\rho$.
\end{itemize}
We note that every ascent-plateau is a leading plateau; indeed, if $i$ is an ascent-plateau of $\rho$ and we had $\rho_j=\rho_i$ for some $j<i-1$, then $\rho_{i-1}$ would be between two instances of the larger letter $\rho_i$ and hence $\rho$ would not be an $r$-Stirling permutation.

It is not hard to show that $\left|\P_{r,k}\right|=\left|\Q_{r,k}\right|=\prod_{j=1}^{k-1}(rj+1)$ for all $r\geq2$ and $k\geq1$: the formula for $\left|\Q_{r,k}\right|$ is shown by counting the places to insert $1^r$, then $2^r$, and so on; and the formula for $\left|\P_{r,k}\right|$ is shown in a similar way using the characterization in Proposition \ref{prop:characterizeclusters} given later. Thus, it is worth asking whether any interesting statistics are {\em equidistributed} over these sets, i.e., one statistic has the same distribution over $\P_{r,k}$ as the other does over $\Q_{r,k}$. To that end, define
\begin{align*}
p_{r,k}(i,j) & \coloneqq \left|\{\,\pi\in\P_{r,k}:\des(\pi)=i,\pk(\pi)=j\,\}\right|,\\
\pl_{r,k}(i,j) & \coloneqq\left|\{\,\pi\in\P_{r,k}:\des(\pi)=i,\lpk(\pi)=j\,\}\right|,\\
q_{r,k}(i,j) & \coloneqq\left|\{\,\pi\in\Q_{r,k}:\lplat(\pi)=i,\ascplat(\pi)=j\,\}\right|,\text{ and}\\
\ql_{r,k}(i,j) & \coloneqq\left|\{\,\pi\in\Q_{r,k}:\lplat(\pi)=i,\lascplat(\pi)=j\,\}\right|.
\end{align*}
The following is our main result connecting inverse $2134\cdots m$-clusters and $r$-Stirling permutations.
\begin{thm}[Equidistributions between $\P_{r,k}$ and $\Q_{r,k}$] \label{thm:equidistribution}
For all $r\geq2$, $k\geq 1$, and $i,j\geq0$, we have 
\[ p_{r,k}(i,j)=q_{r,k}(i,j) \qquad \text{and} \qquad \pl_{r,k}(i,j)=\ql_{r,k}(i,j).\]
In other words, the pair $(\des,\pk)$ over $\P_{r,k}$ is jointly equidistributed
with $(\lplat,\ascplat)$ over $\Q_{r,k}$, and the pair $(\des,\lpk)$
over $\P_{r,k}$ is jointly equidistributed with $(\lplat,\lascplat)$
over $\Q_{r,k}$.
\end{thm}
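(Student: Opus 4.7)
The plan is to prove both equidistributions at once by constructing a bijection $\Phi\colon\P_{r,k}\to\Q_{r,k}$ that simultaneously maps $\des$ to $\lplat$, $\pk$ to $\ascplat$, and $\lpk$ to $\lascplat$. This suffices for the theorem, and tracking all three statistics at once is no harder than tracking any two, since $\lpk(\pi)-\pk(\pi)\in\{0,1\}$ according as $\pi_1>\pi_2$ or not, and $\lascplat(\rho)-\ascplat(\rho)\in\{0,1\}$ according as $\rho_1=\rho_2$ or not; consistency of these two ``boundary'' conditions will be a byproduct of the construction.

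The bijection $\Phi$ will be defined recursively on $k$, exploiting a common insertion structure on both sides. On the $\Q$-side, every $\rho\in\Q_{r,k}$ is uniquely obtained from an element of $\Q_{r,k-1}$ by inserting the block $k^r$ into one of the $r(k-1)+1$ slots (before the first letter, strictly inside, or after the last letter). On the $\P$-side, I would use the characterization of $2134\cdots(r+1)$-clusters (Proposition~\ref{prop:characterizeclusters}) to establish an analogous insertion structure: every $\pi\in\P_{r,k}$ is uniquely built from a $\pi'\in\P_{r,k-1}$ by an operation parametrized by a slot in $\{0,1,\dots,r(k-1)\}$ corresponding to where the new overlapping occurrence of $2134\cdots(r+1)$ is grafted onto the cluster $\pi^{-1}$. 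This yields $\Phi$ by induction once a slot-to-slot matching is chosen; the base case $k=1$ is trivial as each side is a singleton.

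The core of the proof is to verify that the three statistic increments agree slot by slot. On the $\Q$-side, inserting $k^r$ contributes $r-1$ new plateaus, all leading since $k$ is a new letter; the number of new (left) ascent-plateaus depends on the slot type (boundary versus interior, and whether the inserted block splits an existing equal pair), and $\lplat$ receives a $-1$ correction when the slot falls inside an existing leading plateau. One writes down the resulting table of increments in $(\lplat,\ascplat,\lascplat)$ across the $r(k-1)+1$ slots. A parallel case analysis on the $\P$-side---using the structure of clusters and the fact that $\des$, $\pk$, $\lpk$ of $\pi$ are the inverse statistics of $\sigma=\pi^{-1}$---should produce an analogous table for $(\des,\pk,\lpk)$, and matching the two tables fixes the slot pairing. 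The main obstacle is this matching itself: because $\des$, $\pk$, $\lpk$ translate into properties about the relative positions of consecutive values $i,i+1$ in $\sigma$, the effect of inserting new values into $\sigma$ depends on how those new values interleave with the existing ones, so the bookkeeping near boundary slots is delicate and must be carried out carefully. If a clean slot matching proves elusive, a backup is to derive joint generating-polynomial recurrences for $\sum_{\pi\in\P_{r,k}}t^{\des(\pi)}u^{\pk(\pi)}v^{\lpk(\pi)}$ and $\sum_{\rho\in\Q_{r,k}}t^{\lplat(\rho)}u^{\ascplat(\rho)}v^{\lascplat(\rho)}$ on the two sides and verify that they agree, which would yield the equidistributions without exhibiting $\Phi$ explicitly.
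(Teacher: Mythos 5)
Your plan is essentially the paper's proof: the paper also recurses on $k$ using exactly these two insertion structures (the letter $rk+2$ inserted into an element of $\P_{r,k}$ anywhere except the very end, with $rk+3,\dots,r(k+1)+1$ appended, which leaves the statistics unchanged; the block $(k+1)^r$ inserted into an element of $\Q_{r,k}$), and classifies the $rk+1$ slots on each side by how $(\des,\pk)$, respectively $(\lplat,\ascplat)$, change, and likewise for the left-peak versions. The paper then simply stops at what you describe as your fallback: it verifies that the number of slots in each class is the same function of the current statistic values on both sides, so $p_{r,k}(i,j)$ and $q_{r,k}(i,j)$ (and $\pl_{r,k}(i,j)$, $\ql_{r,k}(i,j)$) satisfy identical recurrences with identical initial conditions, and the equidistribution follows by induction without ever fixing an explicit slot-to-slot bijection.
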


In light of this theorem, for $r\geq2$ and $k\geq1$ we define the polynomials {\allowdisplaybreaks
\begin{align}\label{eq:C-def}
C_{r,k}(t,y) & \coloneqq\sum_{\pi\in\P_{r,k}}t^{\des(\pi)}y^{\pk(\pi)}=\sum_{\pi\in\Q_{r,k}}t^{\lplat(\pi)}y^{\ascplat(\pi)},\\
\label{eq:Cl-def}
\Cl_{r,k}(t,y) & \coloneqq\sum_{\pi\in\P_{r,k}}t^{\des(\pi)}y^{\lpk(\pi)}=\sum_{\pi\in\Q_{r,k}}t^{\lplat(\pi)}y^{\lascplat(\pi)},
\end{align}
}and their specializations {\allowdisplaybreaks
\begin{align}\label{eq:Cdes-def}
C_{r,k}^{\des}(t) & \coloneqq C_{r,k}(t,1)=\sum_{\pi\in\P_{r,k}}t^{\des(\pi)}=\sum_{\rho\in\Q_{r,k}}t^{\lplat(\rho)},\\
\label{eq:Cpk-def}
C_{r,k}^{\pk}(t) & \coloneqq C_{r,k}(1,t)=\sum_{\pi\in\P_{r,k}}t^{\pk(\pi)}=\sum_{\rho\in\Q_{r,k}}t^{\ascplat(\rho)},\text{ and}\\
\label{eq:Clpk-def}
C_{r,k}^{\lpk}(t) & \coloneqq \Cl_{r,k}(1,t)=\sum_{\pi\in\P_{r,k}}t^{\lpk(\pi)}=\sum_{\rho\in\Q_{r,k}}t^{\lascplat(\rho)}.
\end{align}}

\subsection{Outline}

The organization of this paper is as follows. 
In Section~\ref{sec:exact}, we prove a collection of enumerative results for statistics on $\P_{r,k}$ and $\Q_{r,k}$. 
We begin by giving a simple, alternative characterization of permutations in $\P_{r,k}$. Then we show that the numbers $p_{r,k}(i,j)$ and $q_{r,k}(i,j)$ satisfy the same recurrence relation and initial conditions, and similarly with $\pl_{r,k}(i,j)$ and $\ql_{r,k}(i,j)$, thus establishing the equidistributions in Theorem \ref{thm:equidistribution}. We then prove recurrences for the polynomials $C_{r,k}(t,y)$ and $\Cl_{r,k}(t,y)$, and derive differential equations satisfied by the exponential generating functions for these polynomials. Notably, the proof of one of these differential equations requires a decomposition of $r$-Stirling permutations which does not translate easily to inverse $2134\cdots m$-clusters, so our equidistributions play a crucial role there.

In Section~\ref{sec:roots}, we show that the polynomials $C_{r,k}^{\des}(t)$, $C_{r,k}^{\pk}(t)$ and $C_{r,k}^{\lpk}(t) $ have only real roots, which in turn implies that their coefficients are unimodal and log-concave. We also study some probabilistic aspects of these distributions by giving explicit formulas for the average number of descents, peaks, and left peaks over $\P_{r,k}$, and showing that the corresponding distributions are asymptotically normal---that is, they each converge to a normal distribution as $k\rightarrow\infty$.

In Section~\ref{s-clusters}, we apply the cluster method to obtain generating functions for the polynomials 
\begin{align*}
A_{\sigma,n}^{\ides}(s,t) & \coloneqq\sum_{\pi\in\mathfrak{S}_{n}}s^{\occ_{\sigma}(\pi)}t^{\ides(\pi)+1},\\
P_{\sigma,n}^{\ipk}(s,t) & \coloneqq\sum_{\pi\in\mathfrak{S}_{n}}s^{\occ_{\sigma}(\pi)}t^{\ipk(\pi)+1},\text{ and}\\
P_{\sigma,n}^{\ilpk}(s,t) & \coloneqq\sum_{\pi\in\mathfrak{S}_{n}}s^{\occ_{\sigma}(\pi)}t^{\ilpk(\pi)}
\end{align*}
where $n\ge1$ (these are defined to equal 1 when $n=0$), in the special case that $\sigma=2134\cdots m$. We also use symmetry arguments to obtain analogous results for $\sigma=12\cdots (m-2)m(m-1)$. We conclude in Section~\ref{sec:future} with a brief discussion of future directions of research, including a list of conjectures.

\section{Refined enumeration of inverse \texorpdfstring{$2134\cdots m$}{2134...m}-clusters and \texorpdfstring{$r$}{r}-Stirling permutations} \label{sec:exact}

This section focuses on counting permutations in $\P_{r,k}$ and $\Q_{r,k}$ by the statistics defined in the introduction. After describing a convenient characterization of the elements of $\P_{r,k}$, our first goal is to prove the equidistributions in Theorem \ref{thm:equidistribution}. Then we will give recurrence formulas for the polynomials $C_{r,k}(t,y)$ and $\Cl_{r,k}(t,y)$, along with differential equations for their exponential generating functions.

\subsection{A characterization of inverse \texorpdfstring{$2134\cdots m$}{2134...m}-clusters}

In the following proposition, we will see that an inverse $2134\cdots (r+1)$-cluster is characterized by having an increasing sequence of the values that are not congruent to $2$ modulo $r$, along with a relatively free placement of the values that are congruent to $2$ modulo $r$. We will later use this characterization to derive our recursive formulas for the numbers $p_{r,k}(i,j)$ and $\pl_{r,k}(i,j)$. For this proposition only, we will write $\pi(j)$ in place of $\pi_j$ to denote the $j$th entry of $\pi$, so as to streamline the notation when we also use $\pi^{-1}(a)$ to denote the $a$th entry of $\pi^{-1}$.

\begin{prop} \label{prop:characterizeclusters} Let $\pi \in \S_{rk+1}$ for some $r\ge2$ and $k\geq1$. Then $\pi \in \P_{r,k}$ if and only if both of the following two conditions are met:
\begin{itemize}
\item[\normalfont{(i)}] The letters that are not congruent to $2$ modulo $r$ form an increasing subsequence in $\pi$; that is, if $j < j'$, $\pi(j) \not\equiv 2 \pmod{r}$, and $\pi(j') \not\equiv 2 \pmod{r}$, then $\pi(j) < \pi(j')$.
\item[\normalfont{(ii)}] Each letter $ri+2$ that is congruent to $2$ modulo $r$ comes before the letter $ri+1$ in $\pi$; that is, $\pi^{-1}(ri+2) < \pi^{-1}(ri+1)$.
\end{itemize}
\end{prop}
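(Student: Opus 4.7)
The plan is to reinterpret the cluster condition on $\sigma = \pi^{-1}$ as a chain of inequalities among the positions $\pi^{-1}(a)$, and then observe that conditions (i) and (ii) together encode precisely these chains across all $k$ windows. Writing $\sigma = \pi^{-1}$, the entry $\sigma_a$ equals $\pi^{-1}(a)$, the position of $a$ in $\pi$, so the requirement $\pi \in \P_{r,k}$ says that for every $j \in \{0,1,\ldots,k-1\}$, the sequence $\pi^{-1}(rj+1)\,\pi^{-1}(rj+2)\cdots\pi^{-1}(rj+r+1)$ standardizes to $2134\cdots(r+1)$. Unpacking the pattern, this is equivalent to the chain
\[
\pi^{-1}(rj+2) < \pi^{-1}(rj+1) < \pi^{-1}(rj+3) < \pi^{-1}(rj+4) < \cdots < \pi^{-1}(rj+r+1),
\]
which I will call $(\star)_j$. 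The proposition then reduces to showing that (i) and (ii) jointly are equivalent to the simultaneous validity of all the $(\star)_j$.

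For the forward direction, assuming each $(\star)_j$ holds, the leftmost inequality is precisely condition (ii) with $i = j$. The remaining inequalities say that the values in $\{rj+1, rj+2, \ldots, rj+r+1\}$ not congruent to $2$ modulo $r$, namely $rj+1, rj+3, rj+4, \ldots, rj+r+1$, appear in $\pi$ in increasing order. Since consecutive windows share the value $rj+r+1 = r(j+1)+1$, concatenating these chains across $j = 0, 1, \ldots, k-1$ produces a single increasing chain in $\pi$ involving every value of $\{1,\ldots,rk+1\}$ not congruent to $2$ modulo $r$, giving (i). Conversely, fixing $j$, condition (ii) with $i = j$ supplies $\pi^{-1}(rj+2) < \pi^{-1}(rj+1)$, and condition (i) applied to the increasing sequence $rj+1, rj+3, rj+4, \ldots, rj+r+1$ supplies the other inequalities of $(\star)_j$, so $\pi \in \P_{r,k}$.

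The only mildly delicate point is the bookkeeping: verifying which entries of a window are congruent to $2$ modulo $r$ and recognizing that consecutive windows overlap at exactly one value, so that the per-window chains glue into a global one. Once this is handled, each direction of the equivalence is a direct translation of the pattern condition, so I expect the proof to be short.
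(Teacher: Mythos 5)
Your proposal is correct and follows essentially the same route as the paper: both translate the cluster condition on $\pi^{-1}$ into inequalities among the positions $\pi^{-1}(a)$ (the paper via its adjacent conditions (A) and (B), you via the per-window chains $(\star)_j$ glued at the shared values $r(j+1)+1$) and then match these against (i) and (ii), using the fact that $rj+2$ is the unique letter in each window congruent to $2$ modulo $r$.
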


\begin{proof}
First, observe that conditions (i) and (ii) are equivalent to the following conditions on $\pi^{-1}$:
\begin{itemize}
\item[(A)] If $a \in [rk]$ is congruent modulo $r$ to one of $3,4, \ldots, r$, then $\pi^{-1}(a) < \pi^{-1}(a+1)$.
\item[(B)] If $a \in [rk]$ is congruent to $1$ modulo $r$, then $\pi^{-1}(a) < \pi^{-1}(a+2)$ and $\pi^{-1}(a) > \pi^{-1}(a+1)$.
\end{itemize}
So, it suffices to show that $\pi \in \P_{r,k}$ if and only if conditions (A) and (B) are satisfied.

Suppose that $\pi \in \P_{r,k}$, so that $\pi^{-1}$ is a $2134\cdots (r+1)$-cluster. Assume that $a \in [n-1]$ is congruent modulo $r$ to one of $3,4, \ldots, r$. Then the letters in positions $a$ and $a+1$ of $\pi^{-1}$ are both in the $34\cdots (r+1)$ segment of the same occurrence of $2134\cdots(r+1)$, which implies $\pi^{-1}(a) < \pi^{-1}(a+1)$. This proves condition (A).

Now, assume $a \in [n-1]$ is congruent to $1$ modulo $r$. Then the letters in positions $a$, $a+1$, and $a+2$ in $\pi^{-1}$ correspond precisely to the $213$ segment of the same occurrence of $2134\cdots(r+1)$, so $\pi^{-1}(a) < \pi^{-1}(a+2)$ and $\pi^{-1}(a) > \pi^{-1}(a+1)$. This proves condition (B), and completes the proof of one direction of this proposition.

Conversely, let $\pi \in \S_{rk+1}$ be a permutation for which conditions (A) and (B) are satisfied. The order relations in (A) and (B) imply that, for each $i \in [k]$, $\pi^{-1}$ has an occurrence of $2134\cdots(r+1)$ in positions $r(i-1)+1, r(i-1)+2, \ldots, ri+1$. It follows that $\pi^{-1}$ is a $2134\cdots(r+1)$-cluster, so we conclude that $\pi \in \P_{r,k}$.
\end{proof}

\subsection{Equidistributions}

Recall the classical recurrence relation on the \textit{Eulerian numbers} $A(n,i)$:
\[ A(n+1,i) = i\,A(n,i) + (n-i+2)\,A(n,i-1) \]
(see, e.g., \cite[Section 1.4]{Stanley2011}). Since $A(n,i)$ is the number of permutations in $\S_n$ with $i-1$ descents, this recurrence can be proved combinatorially by examining where the letter $n+1$ can be inserted into a permutation in $\S_n$ and whether this insertion creates a new descent.

We will now establish similar recurrences for the numbers $p_{r,k}(i,j)$ and $q_{r,k}(i,j)$, using the same method as for the Eulerian numbers but keeping track of an additional parameter. In fact, the recurrence for $p_{r,k}(i,j)$ is exactly the same as for $q_{r,k}(i,j)$, from which it will follow by induction that $p_{r,k}(i,j) = q_{r,k}(i,j)$, thus proving that the joint distribution of $(\des,\pk)$ over $\P_{r,k}$ is the same as that of $(\lplat,\ascplat)$ over $\Q_{r,k}$. Intuitively, this means that the two statistics are affected in the same way when new highest values are inserted.

Note first that the result clearly holds for $k=1$, since $$\des(2134\cdots (r+1))=1=\lplat(1^r) \quad\text{and}\quad \pk(2134\cdots (r+1))=0=\ascplat(1^r).$$

\begin{prop}\label{prop:recp}
Let $r\ge2$ and $k\ge1$. Then 
$$
p_{r,k+1}(i,j)=jp_{r,k}(i,j)+(i-j+1)p_{r,k}(i,j-1)+(j+1)p_{r,k}(i-1,j)+(rk-i-j+2)p_{r,k}(i-1,j-1).
$$
\end{prop}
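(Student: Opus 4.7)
The plan is to prove the recurrence combinatorially by exhibiting a bijection $\P_{r,k} \times \{0, 1, \ldots, rk\} \to \P_{r,k+1}$ and tracking how $\des$ and $\pk$ change under it. The starting point is a structural observation from Proposition~\ref{prop:characterizeclusters}: every $\pi \in \P_{r,k}$ must end with its maximum letter, i.e., $\pi_{rk+1} = rk+1$. Indeed, $rk+1$ is the largest non-$\equiv 2 \pmod{r}$ letter, so condition~(i) places it after every other such letter, while condition~(ii) forces each $\equiv 2 \pmod{r}$ letter $ri+2$ to precede $ri+1$, and hence to precede $rk+1$. The same reasoning applied inside $\P_{r,k+1}$ shows that the last $r$ entries of any $\pi' \in \P_{r,k+1}$ are forced to be $(rk+1)(rk+3)(rk+4)\cdots(rk+r+1)$, with $rk+1$ occupying position $rk+2$.

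Given this, for $(\pi, s) \in \P_{r,k} \times \{0, 1, \ldots, rk\}$ I would define $\pi'$ to be the word
\[ \pi_1 \cdots \pi_s \,(rk+2)\, \pi_{s+1} \cdots \pi_{rk+1} \,(rk+3)(rk+4)\cdots(rk+r+1). \]
A direct check of the characterization shows $\pi' \in \P_{r,k+1}$, and the structural observation above makes $(\pi, s) \mapsto \pi'$ a bijection: the inverse deletes the forced trailing block and reads off $s$ as one less than the position of $rk+2$ in what remains. Since the appended block is an increasing run of values above $rk+1$, it contributes nothing to $\des$ or $\pk$ past position $rk+1$ of $\pi'$; all the change concentrates at the inserted $rk+2$, which exceeds every entry of $\pi$. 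Concretely, inserting $rk+2$ at slot $s$ creates a descent at position $s+1$ of $\pi'$ and, when $s \geq 1$, a peak there as well; simultaneously, any descent of $\pi$ at position $s$ is absorbed (since now $\pi_s < rk+2$), and any peak of $\pi$ at position $s$ or $s+1$ is destroyed.

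Using that positions $s$ and $s+1$ of $\pi$ cannot both be peaks (a peak at $s$ forces $\pi_s > \pi_{s+1}$, while a peak at $s+1$ forces $\pi_s < \pi_{s+1}$), together with the fact that $\pi_{rk+1} = rk+1$ makes position $rk$ always an ascent of $\pi$, a short case analysis on $s$ produces the following counts: the number of $s \in \{0, 1, \ldots, rk\}$ yielding $(\Delta\des, \Delta\pk) = (0,0),\, (0,+1),\, (+1,0),\, (+1,+1)$ is $\pk(\pi),\, \des(\pi) - \pk(\pi),\, \pk(\pi) + 1,\, rk - \des(\pi) - \pk(\pi)$ respectively, summing to $rk+1$ as required. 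Aggregating over $\pi \in \P_{r,k}$ with fixed $(\des, \pk) = (i', j')$ and collecting contributions to each $(i, j) = (i' + \Delta\des,\, j' + \Delta\pk)$ then produces the four summands of the recurrence. I expect the main obstacle to be the peak bookkeeping at the boundary values $s \in \{0, rk\}$: at $s = 0$ no peak is created or destroyed (the inserted $rk+2$ lands at position~$1$ of $\pi'$, which is never a peak), while at $s = rk$ the interaction between $\pi_{rk+1} = rk+1$ and the appended block starting with $rk+3 > rk+1$ creates exactly one new peak at position $rk+1$ of $\pi'$, and one must verify that these edge behaviors align with the generic formula to yield the correct counts.
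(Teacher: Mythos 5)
Your proposal is correct and follows essentially the same route as the paper: both build $\P_{r,k+1}$ from $\P_{r,k}$ by inserting $rk+2$ into one of the $rk+1$ slots (everywhere except after the final letter) and appending the forced tail $(rk+3)\cdots(r(k+1)+1)$, then sort the insertion slots into the same four cases according to how $(\des,\pk)$ changes, with the same counts $\pk$, $\des-\pk$, $\pk+1$, and $rk-\des-\pk$. The only difference is that you spell out the structural facts (that elements of $\P_{r,k}$ end in $rk+1$ and that the tail of an element of $\P_{r,k+1}$ is forced) which the paper leaves implicit via Proposition~\ref{prop:characterizeclusters}.
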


\begin{proof}
Every $\pi\in\P_{r,k+1}$ can be obtained uniquely by taking a permutation $\sigma\in\P_{r,k}$ and inserting the letter $rk+2$ in one of $rk+1$ places (any position except the very end), and appending the letters $rk+3,rk+4,\dots,r(k+1)+1$ in this order. Appending $rk+3,rk+4,\dots,r(k+1)+1$ does not change the number of descents or peaks, so the location where the letter $rk+2$ is inserted determines how the statistic values $\des(\pi)$ and $\pk(\pi)$ compare to $\des(\sigma)$ and $\pk(\sigma)$. 

Suppose that $rk+2$ is inserted between $\sigma_\ell$ and $\sigma_{\ell+1}$ (where we set $\ell=0$ if $rk+2$ is inserted at the beginning of $\sigma$), and consider the following four options:
\begin{itemize}
\item if $\ell$ is a peak (and thus also a descent) of $\sigma$, then  $\des(\pi)=\des(\sigma)$ and  $\pk(\pi)=\pk(\sigma)$;
\item if $\ell$ is a descent but not a peak of $\sigma$, then  $\des(\pi)=\des(\sigma)$ and  $\pk(\pi)=\pk(\sigma)+1$;
\item if $\ell+1$ is a peak of $\sigma$ or $\ell=0$, then  $\des(\pi)=\des(\sigma)+1$ and  $\pk(\pi)=\pk(\sigma)$;
\item in all other cases---i.e., $\ell$ is an ascent of $\sigma$ not followed by a descent---we have $\des(\pi)=\des(\sigma)+1$ and  $\pk(\pi)=\pk(\sigma)+1$.
\end{itemize}
Note that the number of positions in the last bullet is $rk-\des(\sigma)-\pk(\sigma)$. Thus, a permutation $\pi\in\P_{r,k+1}$ with $\des(\pi)=i$ and $\pk(\pi)=j$ can be obtained by inserting $rk+2$ in one of the four following ways, prior to appending $rk+3,rk+4,\dots,r(k+1)+1$:
\begin{itemize}
\item by inserting $rk+2$ at one of the $j$ peaks of a permutation $\sigma\in\P_{r,k}$ with $\des(\sigma)=i$ and $\pk(\sigma)=j$;
\item by inserting $rk+2$ at one of the $i-(j-1)$ descents that are not peaks of a permutation $\sigma\in\P_{r,k}$ with $\des(\sigma)=i$ and $\pk(\sigma)=j-1$;
\item by inserting $rk+2$ at the beginning or immediately to the left of one of the $j$ peaks of a permutation $\sigma\in\P_{r,k}$ with $\des(\sigma)=i-1$ and $\pk(\sigma)=j$;
\item by inserting $rk+2$ in any of the $rk-(i-1)-(j-1)$ ascents not followed by a descent of a permutation $\sigma\in\P_{r,k}$ with $\des(\sigma)=i-1$ and $\pk(\sigma)=j-1$.
\end{itemize}
This gives the stated recurrence.
\end{proof}

\begin{prop}\label{prop:recq}
Let $r\ge2$ and $k\ge1$. Then 
$$
q_{r,k+1}(i,j)=jq_{r,k}(i,j)+(i-j+1)q_{r,k}(i,j-1)+(j+1)q_{r,k}(i-1,j)+(rk-i-j+2)q_{r,k}(i-1,j-1).
$$
\end{prop}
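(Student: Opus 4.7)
The plan is to mirror the argument of Proposition \ref{prop:recp} by analyzing how the block insertion that produces elements of $\Q_{r,k+1}$ affects the statistics. Specifically, every $\rho \in \Q_{r,k+1}$ arises uniquely by taking some $\sigma \in \Q_{r,k}$ and inserting the block $(k+1)^r$ at one of the $rk+1$ positions, since in any $r$-Stirling permutation the copies of the largest letter must appear contiguously (any letter between two copies of the maximum is itself at least the maximum). Writing $\ell \in \{0, 1, \dots, rk\}$ for the insertion position, with the block occupying positions $\ell+1, \ldots, \ell+r$ of $\rho$, I would analyze how $(\lplat, \ascplat)$ changes as a function of $\ell$ and count how many positions give each of the four possible outcomes in $\{0, 1\} \times \{0, 1\}$.

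The first step is to track the local effect of inserting $(k+1)^r$. The block creates $r-1$ new plateaus at positions $\ell+1, \ldots, \ell+r-1$ of $\rho$, all with the new maximum value $k+1$. Only the one at position $\ell+1$ is a leading plateau (since $k+1$ appears nowhere earlier), and it is an ascent-plateau exactly when $\ell \geq 1$. Meanwhile, if $\ell$ was a plateau of $\sigma$ then it is destroyed by the insertion, and if $\ell+1$ was an ascent-plateau of $\sigma$ then it is demoted in $\rho$ to a plateau that is no longer an ascent-plateau, because the letter now immediately preceding it is $k+1$. All other plateaus and ascent-plateaus of $\sigma$ are preserved (with positions shifted by $r$ past the insertion point). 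Writing $[\cdot]$ for the Iverson bracket, these observations yield
\begin{align*}
\lplat(\rho) - \lplat(\sigma) &= 1 - [\ell \text{ is a leading plateau of } \sigma], \\
\ascplat(\rho) - \ascplat(\sigma) &= [\ell \geq 1] - [\ell \text{ is an ascent-plateau of } \sigma] - [\ell+1 \text{ is an ascent-plateau of } \sigma].
\end{align*}

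Using the fact (noted earlier) that every ascent-plateau is a leading plateau, and that $\ell$ and $\ell+1$ cannot both be ascent-plateaus of $\sigma$, I would then partition the $rk+1$ insertion positions into four disjoint, exhaustive classes: (a) $\ell$ is an ascent-plateau of $\sigma$, contributing $\ascplat(\sigma)$ positions with outcome $(0,0)$; (b) $\ell$ is a leading plateau but not an ascent-plateau of $\sigma$, contributing $\lplat(\sigma) - \ascplat(\sigma)$ positions with outcome $(0,1)$; (c) $\ell = 0$ or $\ell + 1$ is an ascent-plateau of $\sigma$, contributing $\ascplat(\sigma) + 1$ positions with outcome $(1,0)$; and (d) all remaining positions, contributing $rk - \lplat(\sigma) - \ascplat(\sigma)$ positions with outcome $(1,1)$. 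Summing over $\sigma$ in each class will yield the four terms of the claimed recurrence. The main subtlety is verifying that case (c) really yields $\Delta\lplat = 1$: if $\ell+1$ is an ascent-plateau then $\sigma_\ell < \sigma_{\ell+1}$, so $\ell$ cannot itself be a plateau of $\sigma$; and the boundary $\ell = 0$ must be singled out separately, since position $1$ is never an ascent-plateau by convention, which is precisely what produces the extra $+1$ in the count.
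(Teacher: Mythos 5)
Your proposal is correct and follows essentially the same route as the paper: decompose each $\rho\in\Q_{r,k+1}$ uniquely as an insertion of the block $(k+1)^r$ into some element of $\Q_{r,k}$, then split the $rk+1$ insertion positions into the same four classes (ascent-plateau; leading plateau that is not an ascent-plateau; $\ell=0$ or $\ell+1$ an ascent-plateau; the rest) and count them as $j$, $i-j+1$, $j+1$, and $rk-i-j+2$ respectively. Your Iverson-bracket increment formulas are just a slightly more systematic packaging of the paper's case-by-case bookkeeping, and your checks (every ascent-plateau is a leading plateau, $\ell$ and $\ell+1$ cannot both be ascent-plateaus, the $\ell=0$ boundary) are exactly the points the paper's argument relies on.
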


\begin{proof}
Every $r$-Stirling permutation $\rho\in\Q_{r,k+1}$ can be obtained uniquely by taking a permutation $\tau\in\Q_{r,k}$ and inserting $r$ consecutive copies of the letter $k+1$ in one of $rk+1$ places. The location where the $k+1$ block is inserted determines how the statistic values $\lplat(\rho)$ and $\ascplat(\rho)$ compare to $\lplat(\tau)$ and $\ascplat(\tau)$.

Suppose that the $k+1$ block is inserted between $\rho_\ell$ and $\rho_{\ell+1}$ (where we set $\ell=0$ if the block is inserted at the beginning of $\rho$, and $\ell=rk$ if it is inserted at the end), and consider the following four options:

\begin{itemize}
\item if $\ell$ is an ascent-plateau (and thus also a leading plateau) of $\tau$, then $\lplat(\rho)=\lplat(\tau)$ and  $\ascplat(\rho)=\ascplat(\tau)$;
\item if $\ell$ is a leading plateau but not an ascent-plateau of $\tau$, then  $\lplat(\rho)=\lplat(\tau)$ and  $\ascplat(\rho)=\ascplat(\tau)+1$;
\item if $\ell+1$ is an ascent-plateau of $\tau$ or $\ell=0$, then  $\lplat(\rho)=\lplat(\tau)+1$ and  $\ascplat(\rho)=\ascplat(\tau)$;
\item in all other cases---i.e., $\ell$ is a descent, $\ell$ is a plateau that is not a leading plateau, $\ell$ is an ascent not followed by a plateau, or $\ell=rk$---we have $\lplat(\rho)=\lplat(\tau)+1$ and  $\ascplat(\rho)=\ascplat(\tau)+1$.
\end{itemize}
Note that the number of positions in the last bullet is $rk-\lplat(\tau)-\ascplat(\tau)$. Thus, a permutation $\rho\in\Q_{r,k+1}$ with $\lplat(\rho)=i$ and $\ascplat(\rho)=j$ can be obtained in one of the four following ways:
\begin{itemize}
\item by inserting $r$ consecutive copies of $k+1$ at one of the $j$ ascent-plateaus of a permutation $\tau\in\Q_{r,k}$ with $\lplat(\tau)=i$ and $\ascplat(\tau)=j$;
\item by inserting $r$ consecutive copies of $k+1$ at one of the $i-(j-1)$ leading plateaus that are not ascent-plateaus of a permutation $\tau\in\Q_{r,k}$ with $\lplat(\tau)=i$ and $\ascplat(\tau)=j-1$;
\item by inserting $r$ consecutive copies of $k+1$ at the beginning or immediately to the left of one of the $j$ ascent-plateaus of a permutation $\tau\in\Q_{r,k}$ with $\lplat(\tau)=i-1$ and $\ascplat(\tau)=j$;
\item by inserting $r$ consecutive copies of $k+1$ in any of the $rk-(i-1)-(j-1)$ positions that are neither leading plateaus, immediately to the left of an ascent-plateau, or at the very beginning of a permutation $\tau\in\Q_{r,k}$ with $\lplat(\tau)=i-1$ and $\ascplat(\tau)=j-1$.
\end{itemize}
This gives the stated recurrence.
\end{proof}

We now complete the proof of Theorem \ref{thm:equidistribution} by showing that the joint distribution of $(\des,\lpk)$ over $\P_{r,k}$ is the same as that of $(\lplat,\lascplat)$ over $\Q_{r,k}$. For $k=1$, we have $$\des(2134\cdots (r+1))=1=\lplat(1^r) \quad\text{and}\quad \lpk(2134\cdots (r+1))=1=\lascplat(1^r).$$
Furthermore, we have the following recurrences for the numbers $\pl(i,j)$ and $\ql(i,j)$.

\begin{prop}\label{prop:recpl}
Let $r\ge2$ and $k\ge1$. Then 
$$
\pl_{r,k+1}(i,j)=j\pl_{r,k}(i,j)+(i-j+1)\pl_{r,k}(i,j-1)+j\pl_{r,k}(i-1,j)+(rk-i-j+3)\pl_{r,k}(i-1,j-1).
$$
\end{prop}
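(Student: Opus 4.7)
The plan is to follow the template of Proposition \ref{prop:recp}, replacing peaks by left peaks throughout. As before, every $\pi\in\P_{r,k+1}$ arises uniquely from a permutation $\sigma\in\P_{r,k}$ by inserting the letter $rk+2$ at one of the $rk+1$ positions other than the very end, and then appending $rk+3,\dots,r(k+1)+1$. Let $\ell\in\{0,1,\dots,rk\}$ denote the insertion position (so that $rk+2$ is placed immediately after $\sigma_\ell$, with $\ell=0$ meaning the beginning). I will sort these insertions into four types based on how they affect the pair $(\des,\lpk)$.

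The cleanest way to carry out the case analysis is via the observation $\lpk(\sigma)=\pk(0\sigma)$, where $0\sigma$ denotes the sequence $0,\sigma_1,\dots,\sigma_{rk+1}$ obtained by prepending $0$: an insertion into $\sigma$ at position $\ell$ corresponds to an insertion into $0\sigma$ at position $\ell+1$, so the peak analysis of Proposition \ref{prop:recp} applied to $0\sigma$ translates into four cases on $\sigma$. The insertion (i) leaves both statistics unchanged when $\ell$ is a left peak of $\sigma$; (ii) leaves $\des$ unchanged and increases $\lpk$ by $1$ when $\ell$ is a descent of $\sigma$ that is not a left peak; (iii) increases $\des$ by $1$ and leaves $\lpk$ unchanged when $\ell+1$ is a left peak of $\sigma$; and (iv) increases both by $1$ otherwise. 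The number of positions of each type is $\lpk(\sigma)$, $\des(\sigma)-\lpk(\sigma)$, $\lpk(\sigma)$, and $rk+1-\des(\sigma)-\lpk(\sigma)$, respectively. Summing contributions, a permutation $\pi\in\P_{r,k+1}$ with $\des(\pi)=i$ and $\lpk(\pi)=j$ is obtained from a $\sigma$ with parameters $(i,j)$, $(i,j-1)$, $(i-1,j)$, or $(i-1,j-1)$, contributing coefficients $j$, $i-j+1$, $j$, and $rk-i-j+3$, which gives the stated recurrence.

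The main subtlety, and the point where care is required, is the treatment of the $\ell=0$ insertion. In the $(\des,\pk)$ analysis of Proposition \ref{prop:recp}, inserting at the very beginning is an ``extra'' position counted in case (iii), producing the $+1$ in the coefficient $j+1$ of $p_{r,k}(i-1,j)$. Here, by contrast, $\ell=0$ falls into case (iii) precisely when $\sigma_1>\sigma_2$---equivalently, when position $1$ is already a left peak of $\sigma$ and so is already one of the $\lpk(\sigma)$ positions counted. Thus there is no extra $+1$ on the coefficient of $\pl_{r,k}(i-1,j)$, and the missing unit instead inflates the coefficient of $\pl_{r,k}(i-1,j-1)$ from the $rk-i-j+2$ appearing in Proposition \ref{prop:recp} to $rk-i-j+3$. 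Once this boundary case is handled correctly, the rest of the argument is a routine transcription of the proof of Proposition \ref{prop:recp}.
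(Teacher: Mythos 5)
Your proof is correct and follows exactly the route the paper intends: the paper omits the argument, noting it is nearly identical to the insertion proofs of Propositions \ref{prop:recp} and \ref{prop:recq}, and your adaptation (including the observation $\lpk(\sigma)=\pk(0\sigma)$ and the careful handling of the $\ell=0$ slot, which lands in case (iii) exactly when position $1$ is already a left peak) correctly accounts for the coefficient shift from $j+1$ and $rk-i-j+2$ to $j$ and $rk-i-j+3$.
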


\begin{prop}\label{prop:recql}
Let $r\ge2$ and $k\ge1$. Then 
$$
\ql_{r,k+1}(i,j)=j\ql_{r,k}(i,j)+(i-j+1)\ql_{r,k}(i,j-1)+j\ql_{r,k}(i-1,j)+(rk-i-j+3)\ql_{r,k}(i-1,j-1).
$$
\end{prop}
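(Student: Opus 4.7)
The plan is to follow the same strategy as in the proof of Proposition \ref{prop:recq}: construct each $\rho \in \Q_{r,k+1}$ uniquely as the result of inserting $r$ consecutive copies of $k+1$ into some $\tau \in \Q_{r,k}$ at one of the $rk+1$ possible positions, and classify these positions according to how they affect the pair $(\lplat,\lascplat)$. The goal is to show that the four possible changes $(\Delta\lplat,\Delta\lascplat) \in \{(0,0),(0,1),(1,0),(1,1)\}$ arise from exactly $\lascplat(\tau)$, $\lplat(\tau)-\lascplat(\tau)$, $\lascplat(\tau)$, and $rk+1-\lplat(\tau)-\lascplat(\tau)$ positions respectively, yielding the four terms of the recurrence with the claimed coefficients.

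The main tool will be the identity $\lascplat(\sigma) = \ascplat(\sigma) + \delta(\sigma)$ for any $r$-Stirling permutation $\sigma$, where $\delta(\sigma) = 1$ if $\sigma_1 = \sigma_2$ and $\delta(\sigma) = 0$ otherwise. This reduces the analysis to computing $\delta(\rho) - \delta(\tau)$ for each insertion position $\ell$ and combining with the $\ascplat$ analysis already carried out for Proposition \ref{prop:recq}. Explicitly, $\delta$ changes only at $\ell \in \{0,1\}$: inserting at $\ell=0$ forces $\rho_1=\rho_2=k+1$ so $\delta(\rho)=1$; inserting at $\ell=1$ places $k+1$ between $\tau_1$ and $\tau_2$ so $\delta(\rho)=0$; and for $\ell \geq 2$ we have $\delta(\rho)=\delta(\tau)$.

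Next, I will revisit each of the four cases in the proof of Proposition \ref{prop:recq} and determine which pair $(\Delta\lplat,\Delta\lascplat)$ each position produces. The first case (positions that are ascent-plateaus of $\tau$) has $\ell \geq 2$ and contributes $(0,0)$. The second case (leading plateaus that are not ascent-plateaus) splits according to $\ell$: the $\ell \geq 2$ positions contribute $(0,1)$, while the single possible position $\ell=1$ (which forces $\tau_1=\tau_2$ because a plateau at position $1$ is vacuously leading) contributes $(0,0)$ after accounting for the loss in $\delta$. The third case splits as well: $\ell \geq 1$ positions contribute $(1,0)$, while $\ell=0$ contributes $(1,1-\delta(\tau))$. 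Finally, the fourth case always has $\ell \geq 1$, and whenever $\ell=1$ occurs here we must have $\tau_1 \neq \tau_2$, so $\delta$ is preserved and every position contributes $(1,1)$. Summing these counts and using $\ascplat(\tau)+\delta(\tau)=\lascplat(\tau)$ to repackage the totals produces the claimed recurrence.

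The main obstacle will be the bookkeeping for the insertions at $\ell=0$ and $\ell=1$, where the special role of position $1$ in the definition of $\lascplat$ reshuffles contributions across cases compared to Proposition \ref{prop:recq}. In particular, the position $\ell=0$ (previously in the third case for Proposition \ref{prop:recq}, contributing to its coefficient $j+1$) now largely migrates to the fourth case, explaining why the coefficient there increases from $rk-i-j+2$ to $rk-i-j+3$ while the third coefficient drops from $j+1$ to $j$. Verifying that the $\tau_1=\tau_2$ vs.\ $\tau_1 \neq \tau_2$ subcases absorb correctly into the substitution $\ascplat \mapsto \lascplat$ is the only delicate point.
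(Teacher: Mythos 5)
Your proposal is correct and follows essentially the same route as the paper's (omitted) proof, namely the unique insertion of the block $(k+1)^r$ into $\tau\in\Q_{r,k}$ at one of the $rk+1$ positions and a classification of positions by their effect on the statistic pair, exactly as in Proposition \ref{prop:recq}. The only difference is organizational: you mediate the classification through the identity $\lascplat(\tau)=\ascplat(\tau)+[\tau_1=\tau_2]$, and your treatment of the boundary insertions $\ell\in\{0,1\}$ (the only places where this correction term changes) is accurate and yields the stated coefficients $j$, $i-j+1$, $j$, and $rk-i-j+3$.
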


We omit the proofs for these recurrences as they are nearly identical to the proofs of Propositions \ref{prop:recp} and \ref{prop:recq}. Since $\pl(i,j)$ and $\ql(i,j)$ satisfy the same recurrence, the proof of Theorem \ref{thm:equidistribution} is complete.

\subsection{Polynomial recurrences} \label{sec:polynomial}

Recall from Equations~\eqref{eq:C-def} and~\eqref{eq:Cl-def} that the polynomial $C_{r,k}(t,y)$ encodes the joint distribution of $\des$ and $\pk$ over $\mathcal{P}_{r,k}$---equivalently, that of $\lplat$ and $\ascplat$ over $\mathcal{Q}_{r,k}$---and $\Cl_{r,k}(t,y)$ encodes the joint distribution of $\des$ and $\lpk$ over $\mathcal{P}_{r,k}$---equivalently, that of $\lplat$ and $\lascplat$ over $\mathcal{Q}_{r,k}$. 
For all $r\geq 2$, we have $C_{r,1}(t,y)=t$ and $\Cl_{r,1}(t,y)=ty$. Since
\[
C_{r,k}(t,y) = \sum_{i,j\geq0}p_{r,k}(i,j)t^{i}y^{j}\qquad\text{and}\qquad \Cl_{r,k}(t,y) = \sum_{i,j\geq0}\pl_{r,k}(i,j)t^{i}y^{j},
\]
we may use our recurrences for the numbers $p_{r,k}(i,j)$ and $\pl_{r,k}(i,j)$ to derive recurrences for the polynomials $C_{r,k}(t,y)$ and $\Cl_{r,k}(t,y)$, which can then be used to compute these polynomials for larger values of $k$.  We begin by proving a recurrence for $C_{r,k}(t,y)$.

\begin{thm} \label{t-Crec}
Let $r\geq2$ and $k\geq1$. We have
\begin{align*}
C_{r,k+1}(t,y) & =(1+rky)tC_{r,k}(t,y)+(1-t)ty\frac{\partial}{\partial t}C_{r,k}(t,y)+(1+t)(1-y)y\frac{\partial}{\partial y}C_{r,k}(t,y).
\end{align*}
\end{thm}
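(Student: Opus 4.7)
The plan is to derive the polynomial recurrence directly from the coefficient recurrence in Proposition~\ref{prop:recp}. Since $C_{r,k}(t,y)=\sum_{i,j\ge 0}p_{r,k}(i,j)t^i y^j$, I multiply both sides of the recurrence
\[
p_{r,k+1}(i,j)=jp_{r,k}(i,j)+(i-j+1)p_{r,k}(i,j-1)+(j+1)p_{r,k}(i-1,j)+(rk-i-j+2)p_{r,k}(i-1,j-1)
\]
by $t^i y^j$ and sum over all $i,j\ge 0$. The left-hand side becomes $C_{r,k+1}(t,y)$, and the task is to identify each of the four sums on the right with a natural differential operator applied to $C_{r,k}$.

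For the four terms I would use the standard dictionary: multiplication of the coefficient by $j$ corresponds to $y\,\partial/\partial y$, multiplication by $i$ corresponds to $t\,\partial/\partial t$, and an index shift $i\mapsto i-1$ (resp.\ $j\mapsto j-1$) corresponds to multiplication of the whole series by $t$ (resp.\ $y$). Concretely, I expect to obtain
\begin{align*}
\sum_{i,j}jp_{r,k}(i,j)t^i y^j &= y\frac{\partial C_{r,k}}{\partial y},\\
\sum_{i,j}(i-j+1)p_{r,k}(i,j-1)t^i y^j &= y\left(t\frac{\partial C_{r,k}}{\partial t}-y\frac{\partial C_{r,k}}{\partial y}\right),\\
\sum_{i,j}(j+1)p_{r,k}(i-1,j)t^i y^j &= t\left(C_{r,k}+y\frac{\partial C_{r,k}}{\partial y}\right),\\
\sum_{i,j}(rk-i-j+2)p_{r,k}(i-1,j-1)t^i y^j &= ty\left(rk\,C_{r,k}-t\frac{\partial C_{r,k}}{\partial t}-y\frac{\partial C_{r,k}}{\partial y}\right),
\end{align*}
where in the second and fourth lines the reindexing $j\mapsto j-1$ (resp.\ $i\mapsto i-1$, $j\mapsto j-1$) replaces the coefficient $(i-j+1)$ by $(i-j)$ (resp.\ $(rk-i-j+2)$ by $(rk-i-j)$), which is then read as a combination of the Euler operators.

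Finally, I would collect terms and verify that the coefficient of $C_{r,k}$ is $t(1+rky)$, the coefficient of $\partial C_{r,k}/\partial t$ is $ty-t^2 y=(1-t)ty$, and the coefficient of $\partial C_{r,k}/\partial y$ is $y-y^2+ty-ty^2=(1+t)(1-y)y$, matching the stated formula. There is no real obstacle here—the entire argument is bookkeeping—but the step that requires the most care is the fourth term, where both indices are shifted simultaneously and the linear coefficient $(rk-i-j+2)$ must be rewritten in the new variables before being converted into differential operators; a sign error there would corrupt both the $\partial/\partial t$ and $\partial/\partial y$ coefficients.
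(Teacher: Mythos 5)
Your proposal is correct and follows essentially the same route as the paper: translate the coefficient recurrence of Proposition~\ref{prop:recp} into Euler operators $t\,\partial/\partial t$ and $y\,\partial/\partial y$ term by term, and your four identities and the final collection of coefficients all check out. The only divergence is bookkeeping at the boundary: you sum over all $i,j\geq 0$ (with the convention that $p_{r,k}$ vanishes at negative indices), which is legitimate because in each term either the $p$-factor is zero or its coefficient genuinely counts insertion positions, so the recurrence holds there as well; the paper instead sums over $i,j\geq 1$ and then cancels the resulting boundary contributions (both equal to $t^{k+1}$) using the facts $p_{r,k}(0,j)=0$ and $p_{r,k}(i,0)=\delta_{i,k}$, which your version never needs.
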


\begin{proof}
Multiplying both sides of the recurrence from Proposition \ref{prop:recp} by $t^{i}y^{j}$ and summing over all $i,j\geq1$ yields {\allowdisplaybreaks
\begin{align}
\sum_{i,j\geq1}p_{r,k+1}(i,j)t^{i}y^{j} & =\sum_{i,j\geq1}jp_{r,k}(i,j)t^{i}y^{j}+\sum_{i,j\geq1}(i-j+1)p_{r,k}(i,j-1)t^{i}y^{j}\nonumber \\
 & \quad+\sum_{i,j\geq1}(j+1)p_{r,k}(i-1,j)t^{i}y^{j}+\sum_{i,j\geq1}(rk-i-j+2)p_{r,k}(i-1,j-1)t^{i}y^{j}\nonumber \\
 & =\sum_{i,j\geq1}jp_{r,k}(i,j)t^{i}y^{j}+\sum_{\substack{i\geq1\\
j\geq0
}
}(i-j)p_{r,k}(i,j)t^{i}y^{j+1}\nonumber \\
 & \quad+\sum_{\substack{i\geq0\\
j\geq1
}
}(j+1)p_{r,k}(i,j)t^{i+1}y^{j}+\sum_{i,j\geq0}(rk-i-j)p_{r,k}(i,j)t^{i+1}y^{j+1}.
\label{e-hugemess}
\end{align}}

Let us take the summands in (\ref{e-hugemess}) and rewrite them in terms of $C_{r,k}(t,y)$ and its derivatives. We begin with the left-hand side of (\ref{e-hugemess}). Observe that 
\[
C_{r,k+1}(t,y)-\sum_{\substack{j\geq0}
}p_{r,k+1}(0,j)y^{j}-\sum_{\substack{i\geq1}
}p_{r,k+1}(i,0)t^{i}=\sum_{i,j\geq1}p_{r,k+1}(i,j)t^{i}y^{j}.
\]
We have $p_{r,k+1}(0,j)=0$ for all $j\geq0$ because every permutation in $\mathcal{P}_{r,k}$ has at least one descent. Furthermore, there is only one permutation in $\mathcal{P}_{r,k}$ with no peaks---namely, the permutation beginning with the $k$ letters congruent to 2 modulo $r$ in a decreasing run followed by all remaining letters in an increasing run---which has exactly $k$ descents, so $p_{r,k}(i,0)=\delta_{i,k}$ (the Kronecker delta). Thus, we find
\[
C_{r,k+1}(t,y)-t^{k+1}=\sum_{i,j\geq1}p_{r,k+1}(i,j)t^{i}y^{j}
\]
for the left-hand side of (\ref{e-hugemess}).

We proceed to the right-hand side. We have
\[
y\frac{\partial}{\partial y}C_{r,k}(t,y)=\sum_{i,j\geq0}jp_{r,k}(i,j)t^{i}y^{j},
\]
which implies 
\[
y\frac{\partial}{\partial y}C_{r,k}(t,y)-\sum_{j\geq0}jp_{r,k}(0,j)y^{j}=\sum_{i,j\geq1}jp_{r,k}(i,j)t^{i}y^{j}.
\]
Because $p_{r,k}(0,j)=0$, we obtain
\[
\sum_{i,j\geq1}jp_{r,k}(i,j)t^{i}y^{j}=y\frac{\partial}{\partial y}C_{r,k}(t,y)
\]
for the first summand on the right-hand side of (\ref{e-hugemess}).

Next, we have 
\[
t\frac{\partial}{\partial t}C_{r,k}(t,y)=\sum_{i,j\geq0}ip_{r,k}(i,j)t^{i}y^{j},
\]
so
\[
ty\frac{\partial}{\partial t}C_{r,k}(t,y)-y^{2}\frac{\partial}{\partial y}C_{r,k}(t,y)=\sum_{i,j\geq0}(i-j)p_{r,k}(i,j)t^{i}y^{j+1},
\]
and thus
\[
ty\frac{\partial}{\partial t}C_{r,k}(t,y)-y^{2}\frac{\partial}{\partial y}C_{r,k}(t,y)+\sum_{j\geq0}jp_{r,k}(0,j)y^{j+1}=\sum_{\substack{i\geq1\\
j\geq0
}
}(i-j)p_{r,k}(i,j)t^{i}y^{j+1}.
\]
Because $p_{r,k}(0,j)=0$, we are left with
\[
\sum_{\substack{i\geq1\\
j\geq0
}
}(i-j)p_{r,k}(i,j)t^{i}y^{j+1}=ty\frac{\partial}{\partial t}C_{r,k}(t,y)-y^{2}\frac{\partial}{\partial y}C_{r,k}(t,y)
\]
for the second summand on the right-hand side of (\ref{e-hugemess}).

Now, observe that
\begin{align*}
ty\frac{\partial}{\partial y}C_{r,k}(t,y)+tC_{r,k}(t,y) & =\sum_{i,j\geq0}(j+1)p_{r,k}(i,j)t^{i+1}y^{j},
\end{align*}
which gives us
\[
ty\frac{\partial}{\partial y}C_{r,k}(t,y)+tC_{r,k}(t,y)-\sum_{i\geq0}p_{r,k}(i,0)t^{i+1}=\sum_{\substack{i\geq0\\
j\geq1
}
}(j+1)p_{r,k}(i,j)t^{i+1}y^{j}.
\]
Again using the fact that $p_{r,k}(i,0)=\delta_{i,k}$, we obtain
\[
ty\frac{\partial}{\partial y}C_{r,k}(t,y)+tC_{r,k}(t,y)-t^{k+1}=\sum_{\substack{i\geq0\\
j\geq1
}
}(j+1)p_{r,k}(i,j)t^{i+1}y^{j}
\]
for the third summand on the right-hand side of (\ref{e-hugemess}).

Finally, the last summand on the right-hand side is equal to
\[
rktyC_{r,k}(t,y)-t^{2}y\frac{\partial}{\partial t}C_{r,k}(t,y)-ty^{2}\frac{\partial}{\partial y}C_{r,k}(t,y)
=\sum_{i,j\geq0}(rk-i-j)p_{r,k}(i,j)t^{i+1}y^{j+1}.
\]
Putting everything together, we obtain 
\begin{align*}
C_{r,k+1}(t,y)-t^{k+1} & =y\frac{\partial}{\partial y}C_{r,k}(t,y)+ty\frac{\partial}{\partial t}C_{r,k}(t,y)-y^{2}\frac{\partial}{\partial y}C_{r,k}(t,y)\\
 & \qquad+ty\frac{\partial}{\partial y}C_{r,k}(t,y)+tC_{r,k}(t,y)-t^{k+1}\\
 & \qquad+rktyC_{r,k}(t,y)-t^{2}y\frac{\partial}{\partial t}C_{r,k}(t,y)-ty^{2}\frac{\partial}{\partial y}C_{r,k}(t,y)
\end{align*}
which simplifies to the desired formula.
\end{proof}

Below is an analogous recurrence for the polynomials $\Cl_{r,k}(t,y)$. We omit the proof, as it proceeds in the same way as the proof of Theorem \ref{t-Crec} but uses Proposition \ref{prop:recpl} in place of Proposition \ref{prop:recp}.

\begin{thm} \label{t-Clrec}
Let $r\geq2$ and $k\geq1$. We have
\begin{align*}
\Cl_{r,k+1}(t,y) & =(1+rk)ty\Cl_{r,k}(t,y)+(1-t)ty\frac{\partial}{\partial t}\Cl_{r,k}(t,y)+(1+t)(1-y)y\frac{\partial}{\partial y}\Cl_{r,k}(t,y).
\end{align*}
\end{thm}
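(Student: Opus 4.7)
The plan is to mirror the proof of Theorem~\ref{t-Crec}: multiply the recurrence from Proposition~\ref{prop:recpl} by $t^{i}y^{j}$, sum over $i,j\geq 1$, and rewrite each of the four resulting sums in terms of $\Cl_{r,k}(t,y)$ and its partial derivatives with respect to $t$ and $y$. The key preparatory step, as before, is to pin down the boundary values $\pl_{r,k}(0,j)$ and $\pl_{r,k}(i,0)$ that govern what gets added or subtracted during the index shifts.

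First, just as in the proof of Theorem~\ref{t-Crec}, I would note that $\pl_{r,k}(0,j)=0$ for all $j\geq 0$ because every $\pi\in\P_{r,k}$ has at least one descent. Unlike the situation for $C_{r,k}$, however, we also have $\pl_{r,k}(i,0)=0$ for every $i$, so no boundary correction of the form $t^{k+1}$ ever appears. To verify this, take any $\pi\in\P_{r,k}$; by Proposition~\ref{prop:characterizeclusters}(ii), the letter $2$ precedes the letter $1$, so $1$ occupies some position $b>1$ and is preceded by a strictly larger letter. Walking leftward from position $b$ along the maximal descending chain, we either meet an ascent (yielding an interior peak) or arrive at position~$1$ (which is then a descent, hence a left peak). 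Either way $\lpk(\pi)\geq 1$. Therefore both $\Cl_{r,k}(t,y)$ and $\Cl_{r,k+1}(t,y)$ already coincide with their restrictions to the range $i,j\geq 1$.

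With this boundary analysis in place, the four sums on the right-hand side convert into expressions in $\Cl_{r,k}$, $t\,\partial_{t}\Cl_{r,k}$, and $y\,\partial_{y}\Cl_{r,k}$ by the same index-shifting tricks used in Theorem~\ref{t-Crec}. The first sum becomes $y\,\partial_{y}\Cl_{r,k}$ directly. Writing $i-j+1=i-(j-1)$ turns the second sum into $ty\,\partial_{t}\Cl_{r,k}-y^{2}\partial_{y}\Cl_{r,k}$. The third sum becomes $ty\,\partial_{y}\Cl_{r,k}$ after a single shift in $i$. Finally, writing $rk-i-j+3=(rk+1)-(i-1)-(j-1)$ turns the fourth sum into $(rk+1)ty\,\Cl_{r,k}-t^{2}y\,\partial_{t}\Cl_{r,k}-ty^{2}\partial_{y}\Cl_{r,k}$. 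Adding everything together and using the factorization $y-y^{2}+ty-ty^{2}=(1+t)(1-y)y$ yields the stated identity.

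The only conceptual step here is the boundary observation that $\pl_{r,k}(i,0)=0$; without it, stray low-order terms would clutter the algebra. Once that is in hand, the remainder is purely mechanical and in fact cleaner than the proof of Theorem~\ref{t-Crec} (no $t^{k+1}$ corrections are needed), which is presumably why the authors omit the explicit computation.
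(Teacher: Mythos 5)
Your proposal is correct and follows exactly the route the paper intends (it omits the proof, noting it proceeds as in Theorem~\ref{t-Crec} with Proposition~\ref{prop:recpl} in place of Proposition~\ref{prop:recp}); your index shifts and the final collection of terms check out. Your boundary observation that $\pl_{r,k}(i,0)=0$ is the right detail to supply---indeed any $\pi\in\P_{r,k}$ has a descent, and its first descent is already a left peak, so $\lpk(\pi)\geq 1$ and no $t^{k+1}$ correction arises.
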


By specializing appropriately, we obtain from Theorems \ref{t-Crec} and \ref{t-Clrec} recurrences for the univariate polynomials defined in Equations \eqref{eq:Cdes-def}--\eqref{eq:Clpk-def}.

\begin{cor} \label{c-uniC}
Let $r\geq2$ and $k\geq1$. We have \leqnomode 
\begin{align*}
\text{(a)} && C_{r,k+1}^{\des}(t) & =(1+rk)tC_{r,k}^{\des}(t)+t(1-t)\frac{d}{dt}C_{r,k}^{\des}(t), \\
\text{(b)} && C_{r,k+1}^{\pk}(t) & =(1+rkt)C_{r,k}^{\pk}(t)+2t(1-t)\frac{d}{dt}C_{r,k}^{\pk}(t), \text{ and} \\
\text{(c)} && C_{r,k+1}^{\lpk}(t) & =(1+rk)tC_{r,k}^{\lpk}(t)+2t(1-t)\frac{d}{dt}C_{r,k}^{\lpk}(t).
\end{align*}
\end{cor}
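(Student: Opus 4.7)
The plan is to obtain all three recurrences as direct specializations of the bivariate recurrences in Theorems~\ref{t-Crec} and~\ref{t-Clrec}, using the identifications $C_{r,k}^{\des}(t) = C_{r,k}(t,1)$, $C_{r,k}^{\pk}(t) = C_{r,k}(1,t)$, and $C_{r,k}^{\lpk}(t) = \Cl_{r,k}(1,t)$ from Equations~\eqref{eq:Cdes-def}--\eqref{eq:Clpk-def}. The key structural observation that makes this strategy work is that each bivariate recurrence contains one summand with a factor of $(1-t)$ and another with a factor of $(1-y)$, so under each of the substitutions $y=1$ and $t=1$ exactly one of the two derivative summands is annihilated, and no residual information about the unspecialized bivariate polynomial is needed on the right-hand side.

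For part (a), I would substitute $y=1$ into Theorem~\ref{t-Crec}. The summand $(1+t)(1-y)y\frac{\partial}{\partial y}C_{r,k}(t,y)$ vanishes identically, and the two remaining summands reduce---after noting that $\frac{\partial}{\partial t}C_{r,k}(t,y)\big|_{y=1}$ equals the total derivative $\frac{d}{dt}C_{r,k}^{\des}(t)$---to the claimed right-hand side. For parts (b) and (c), I would instead substitute $t=1$ into Theorems~\ref{t-Crec} and~\ref{t-Clrec} respectively, and then relabel the surviving variable from $y$ to $t$. This time the middle summand, which carries a factor of $(1-t)$, drops out. The asymmetry between the coefficients $(1+rkt)$ in (b) and $(1+rk)t$ in (c) traces directly to the difference between the leading summands of the two bivariate recurrences---namely $(1+rky)t$ in Theorem~\ref{t-Crec} versus $(1+rk)ty$ in Theorem~\ref{t-Clrec}---after the substitution and relabeling.

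There is no real obstacle here, as the corollary is a routine specialization of results already proved. The only small bookkeeping items are to verify which derivative summand is annihilated under each substitution, and to confirm that the partial derivatives in the bivariate recurrences evaluate to the total derivatives of the correct one-variable specializations; both are immediate since $C_{r,k}(t,y)$ and $\Cl_{r,k}(t,y)$ are polynomials.
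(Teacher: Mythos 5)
Your proposal is correct and matches the paper's approach exactly: the corollary is obtained by specializing Theorems~\ref{t-Crec} and~\ref{t-Clrec} at $y=1$ (for part (a)) and at $t=1$ (for parts (b) and (c), relabeling the remaining variable), with the $(1-y)$ or $(1-t)$ factor killing the appropriate derivative term in each case. Your bookkeeping about the surviving coefficients and the commutation of specialization with differentiation is accurate, and nothing further is needed.
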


The first few polynomials $C_{r,k}^{\des}(t)$ for $r=2$ and $r=3$ are displayed in Tables~\ref{tab:Cdes2} and~\ref{tab:Cdes3}, respectively. We note that $C_{r,k}^{\des}(t)$ not only encodes the distribution of $\des$ over $\P_{r,k}$ and that of $\lplat$ over $\Q_{r,k}$, but also the distribution over $\Q_{r,k}$ of the statistic $\des$ defined as follows. We say that $i \in [rk]$ is a \textit{descent} of an $r$-Stirling permutation $\rho \in \Q_{r,k}$ if $\rho_i > \rho_{i+1}$ or if $i=rk$ and, as with ordinary permutations, we let $\des(\rho)$ denote the number of descents of $\rho$. It was shown by Janson, Kuba, and Panholzer \cite[Theorems 2 and 8]{Janson2011} that the statistics $\des$ and $\lplat$ are equidistributed over $\Q_{r,k}$, and so
\begin{equation}\label{eq:Cdes-Qdes}
C_{r,k}^{\des}(t)=\sum_{\rho\in\Q_{r,k}}t^{\des(\rho)}.
\end{equation}

\renewcommand{\arraystretch}{1.2}
\begin{table}[H]
\begin{centering}
\begin{tabular}{|c|c|}
\hline 
$k$ & $C_{2,k}^{\des}(t)$\tabularnewline
\hline 
$1$ & $t$\tabularnewline
\hline 
$2$ & $t+2t^{2}$\tabularnewline
\hline 
$3$ & $t+8t^{2}+6t^{3}$\tabularnewline
\hline 
$4$ & $t+22t^{2}+58t^{3}+24t^{4}$\tabularnewline
\hline 
$5$ & $t+52t^{2}+328t^{3}+444t^{4}+120t^{5}$\tabularnewline
\hline 
$6$ & $t+114t^{2}+1452t^{3}+4400t^{4}+3708t^{5}+720t^{6}$\tabularnewline
\hline 
$7$ & $t+240t^{2}+5610t^{3}+32120t^{4}+58140t^{5}+33984t^{6}+5040t^{7}$\tabularnewline
\hline 
\end{tabular}
\par\end{centering}
\caption{Distribution of $\protect\des$ over $\protect\P_{2,k}$ (equivalently,
of $\protect\lplat$ over ${\cal Q}_{2,k}$)}
\label{tab:Cdes2}
\end{table}

\begin{table}[H]
\begin{centering}
\begin{tabular}{|c|c|}
\hline 
$k$ & $C_{3,k}^{\des}(t)$\tabularnewline
\hline 
$1$ & $t$\tabularnewline
\hline 
$2$ & $t+3t^{2}$\tabularnewline
\hline 
$3$ & $t+12t^{2}+15t^{3}$\tabularnewline
\hline 
$4$ & $t+33t^{2}+141t^{3}+105t^{4}$\tabularnewline
\hline 
$5$ & $t+78t^{2}+786t^{3}+1830t^{4}+945t^{5}$\tabularnewline
\hline 
$6$ & $t+171t^{2}+3450t^{3}+17538t^{4}+26685t^{5}+10395t^{6}$\tabularnewline
\hline 
$7$ & $t+360t^{2}+13257t^{3}+125352t^{4}+396495t^{5}+435960t^{6}+135135t^{7}$\tabularnewline
\hline 
\end{tabular}
\par\end{centering}
\caption{Distribution of $\protect\des$ over $\protect\P_{3,k}$ (equivalently,
of $\protect\lplat$ over ${\cal Q}_{3,k}$)}
\label{tab:Cdes3}
\end{table}

In the case of Stirling permutations ($r=2$), the equidistribution result of Janson--Kuba--Panholzer reduces to B\'ona's earlier result \cite[Proposition 1]{Bona2008/09} that $\des$ and $\plat$ are equidistributed over $\Q_{2,k}$, as plateaus and leading plateaus coincide in this setting. Gessel and Stanley studied the distribution of $\des$ over $\Q_{2,k}$ in their seminal paper on Stirling permutations \cite{Gessel1978}; in particular, they proved that 
\[
\frac{C_{2,k}^{\des}(t)}{(1-t)^{2k+1}}=\sum_{m=0}^{\infty}S(k+m,m)t^{m}
\]
where the $S(n,m)$ are Stirling numbers of the second kind, hence the name ``Stirling permutations''.

We display the first few polynomials $C_{r,k}^{\pk}(t)$ and $C_{r,k}^{\lpk}(t)$ for $r=2$ and $r=3$ in Tables~\ref{tab:Cpk2}--\ref{tab:Clpk3}. In terms of $r$-Stirling permutations, these polynomials encode the distributions of $\ascplat$ and $\lascplat$ over $\Q_{r,k}$. 

\begin{table}[H]
\begin{centering}
\begin{tabular}{|c|c|}
\hline 
$k$ & $C_{2,k}^{\pk}(t)$\tabularnewline
\hline 
$1$ & $1$\tabularnewline
\hline 
$2$ & $1+2t$\tabularnewline
\hline 
$3$ & $1+10t+4t^{2}$\tabularnewline
\hline 
$4$ & $1+36t+60t^{2}+8t^{3}$\tabularnewline
\hline 
$5$ & $1+116t+516t^{2}+296t^{3}+16t^{4}$\tabularnewline
\hline 
$6$ & $1+358t+3508t^{2}+5168t^{3}+1328t^{4}+32t^{5}$\tabularnewline
\hline 
$7$ & $1+1086t+21120t^{2}+64240t^{3}+42960t^{4}+5664t^{5}+64t^{6}$\tabularnewline
\hline 
\end{tabular}
\par\end{centering}
\caption{Distribution of $\protect\pk$ over $\protect\P_{2,k}$ (equivalently,
of $\protect\ascplat$ over ${\cal Q}_{2,k}$)}
\label{tab:Cpk2}
\end{table}

\begin{table}[H]
\begin{centering}
\begin{tabular}{|c|c|}
\hline 
$k$ & $C_{3,k}^{\pk}(t)$\tabularnewline
\hline 
$1$ & $1$\tabularnewline
\hline 
$2$ & $1+3t$\tabularnewline
\hline 
$3$ & $1+15t+12t^{2}$\tabularnewline
\hline 
$4$ & $1+54t+165t^{2}+60t^{3}$\tabularnewline
\hline 
$5$ & $1+174t+1365t^{2}+1740t^{3}+360t^{4}$\tabularnewline
\hline 
$6$ & $1+537t+9087t^{2}+27195t^{3}+18900t^{4}+2520t^{5}$\tabularnewline
\hline 
$7$ & $1+1629t+54027t^{2}+317583t^{3}+496440t^{4}+216720t^{5}+20160t^{6}$\tabularnewline
\hline 
\end{tabular}
\par\end{centering}
\caption{Distribution of $\protect\pk$ over $\protect\P_{3,k}$ (equivalently,
of $\protect\ascplat$ over ${\cal Q}_{3,k}$)}
\label{tab:Cpk3}
\end{table}

\begin{table}[H]
\begin{centering}
\begin{tabular}{|c|c|}
\hline 
$k$ & $C_{2,k}^{\lpk}(t)$\tabularnewline
\hline 
$1$ & $t$\tabularnewline
\hline 
$2$ & $2t+t^{2}$\tabularnewline
\hline 
$3$ & $4t+10t^{2}+t^{3}$\tabularnewline
\hline 
$4$ & $8t+60t^{2}+36t^{3}+t^{4}$\tabularnewline
\hline 
$5$ & $16t+296t^{2}+516t^{3}+116t^{4}+t^{5}$\tabularnewline
\hline 
$6$ & $32t+1328t^{2}+5168t^{3}+3508t^{4}+358t^{5}+t^{6}$\tabularnewline
\hline 
$7$ & $64t+5664t^{2}+42960t^{3}+64240t^{4}+21120t^{5}+1086t^{6}+t^{7}$\tabularnewline
\hline 
\end{tabular}
\par\end{centering}
\caption{Distribution of $\protect\lpk$ over $\protect\P_{2,k}$ (equivalently,
of $\protect\lascplat$ over ${\cal Q}_{2,k}$)}
\label{tab:Clpk2}
\end{table}

\begin{table}[H]
\begin{centering}
\begin{tabular}{|c|c|}
\hline 
$k$ & $C_{3,k}^{\lpk}(t)$\tabularnewline
\hline 
$1$ & $t$\tabularnewline
\hline 
$2$ & $2t+2t^{2}$\tabularnewline
\hline 
$3$ & $4t+18t^{2}+6t^{3}$\tabularnewline
\hline 
$4$ & $8t+104t^{2}+144t^{3}+24t^{4}$\tabularnewline
\hline 
$5$ & $16t+504t^{2}+1800t^{3}+1200t^{4}+120t^{5}$\tabularnewline
\hline 
$6$ & $32t+2240t^{2}+16848t^{3}+27600t^{4}+10800t^{5}+720t^{6}$\tabularnewline
\hline 
$7$ & $64t+9504t^{2}+134688t^{3}+439824t^{4}+411600t^{5}+105840t^{6}+5040t^{7}$\tabularnewline
\hline 
\end{tabular}
\par\end{centering}
\caption{Distribution of $\protect\lpk$ over $\protect\P_{3,k}$ (equivalently,
of $\protect\lascplat$ over ${\cal Q}_{3,k}$)}
\label{tab:Clpk3}
\end{table}

Ascent-plateaus and left ascent-plateaus in Stirling permutations (i.e., in the case $r=2$) have been studied previously in \cite{Duh2018, Ma2019, Ma2015, Ma2015a}. Notably, Ma and Mansour \cite{Ma2015} showed that $C_{2,k}^{\pk}(t)$ is precisely the $1/2$\textit{-Eulerian polynomial} $A^{(2)}_k(t)$---a specialization of the $1/r$\textit{-Eulerian polynomial} introduced by Savage and Viswanathan \cite{Savage2012}---and that $C_{2,k}^{\lpk}(t)$ is related to $A^{(2)}_k(t)$ via the equation $C_{2,k}^{\lpk}(t)=t^k A^{(2)}_k(1/t)$. It follows that the coefficients of $C_{2,k}^{\lpk}(t)$ are precisely those of $C_{2,k}^{\pk}(t)$ but in reverse order, as can be observed in Tables~\ref{tab:Cpk2} and~\ref{tab:Clpk2}. These results do not extend to larger values of $r$.\footnote{More precisely, Ma and Mansour \cite{Ma2015} introduced a statistic on $\Q_{r,k}$ called the number of \textit{longest ascent-plateaus}, and they showed that the $1/r$-Eulerian polynomials count permutations in $\Q_{r,k}$ by longest ascent-plateaus. Longest ascent-plateaus are precisely ascent-plateaus when $r=2$, but deviate for larger $r$.}

Below is an explicit formula for the leading coefficients of the polynomials $C_{r,k}^{\pk}(t)$. In Section \ref{sec:conjectures}, we conjecture similar formulas for certain other coefficients of the polynomials $C_{r,k}^{\pk}(t)$ and $C_{r,k}^{\lpk}(t)$.
\begin{prop}
For all $r\geq2$ and $k\geq1$, the number of permutations in $\P_{r,k}$ with exactly $k-1$ peaks---equivalently, the number of permutations in $\Q_{r,k}$ with exactly $k-1$ ascent-plateaus---is equal to $\prod_{j=1}^{k-1}((r-2)j+2)$.
\end{prop}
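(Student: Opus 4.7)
The plan is to extract the leading coefficient of $C_{r,k}^{\pk}(t)$ directly from the differential recurrence in Corollary~\ref{c-uniC}(b). An equivalent combinatorial argument can be read off from Proposition~\ref{prop:recq} by counting, for each $\tau\in\Q_{r,k}$ with $\ascplat(\tau)=k-1$, the insertion positions for a block $(k+1)^r$ that increase $\ascplat$ by one; summing the two relevant cases of that proposition (leading plateaus that are not ascent-plateaus, and the ``other'' positions) yields exactly $(r-2)k+2$ such positions. I will describe the algebraic route, since it is shorter.

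First I would establish by induction on $k$ that $\deg C_{r,k}^{\pk}(t)\le k-1$, with base case $C_{r,1}^{\pk}(t)=1$: if the bound holds for $k$, then each term on the right-hand side of
\[C_{r,k+1}^{\pk}(t)=(1+rkt)C_{r,k}^{\pk}(t)+2t(1-t)\frac{d}{dt}C_{r,k}^{\pk}(t)\]
has degree at most $k$, so the bound propagates to $k+1$.

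Next, set $a_k:=[t^{k-1}]C_{r,k}^{\pk}(t)$ and extract the coefficient of $t^k$ from both sides of the same recurrence. The factor $(1+rkt)C_{r,k}^{\pk}(t)$ contributes $rk\,a_k$ to the $t^k$ coefficient. The derivative $\frac{d}{dt}C_{r,k}^{\pk}(t)$ has leading term $(k-1)a_k\,t^{k-2}$, so $2t\frac{d}{dt}C_{r,k}^{\pk}(t)$ contributes only up through $t^{k-1}$, while $-2t^2\frac{d}{dt}C_{r,k}^{\pk}(t)$ contributes $-2(k-1)a_k$ to the $t^k$ coefficient. Combining,
\[a_{k+1}=rk\,a_k-2(k-1)a_k=((r-2)k+2)\,a_k.\]
Since $a_1=[t^0]\,1=1$, iterating gives $a_k=\prod_{j=1}^{k-1}((r-2)j+2)$, which is the desired formula. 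As a bonus, each factor $(r-2)j+2$ is strictly positive when $r\ge 2$, so $a_k>0$ and hence $\deg C_{r,k}^{\pk}(t)=k-1$ exactly, confirming that ``the coefficient of $t^{k-1}$'' really is the leading coefficient.

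There is no real obstacle here; the only care needed is in tracking which pieces of the differential operator $2t(1-t)\frac{d}{dt}$ actually contribute to the $t^k$ coefficient, and in verifying the degree bound so that this bookkeeping is complete.
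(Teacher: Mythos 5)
Your proof is correct, and it takes a different route from the paper's. The paper argues combinatorially by induction: starting from a permutation in $\P_{r,k}$ with $k-1$ peaks, it counts directly the insertion positions for the letter $rk+2$ (followed by appending $rk+3,\dots,r(k+1)+1$) that create exactly one new peak, finding $rk+1-2(k-1)-1=(r-2)k+2$ of them. You instead extract the top coefficient from the already-proved recurrence of Corollary \ref{c-uniC}(b): after the degree bound $\deg C_{r,k}^{\pk}\le k-1$, the coefficient of $t^k$ in $(1+rkt)C_{r,k}^{\pk}(t)+2t(1-t)\frac{d}{dt}C_{r,k}^{\pk}(t)$ is $rk\,a_k-2(k-1)a_k$, giving $a_{k+1}=((r-2)k+2)a_k$ with $a_1=1$; your bookkeeping of which parts of the operator $2t(1-t)\frac{d}{dt}$ reach degree $k$ is accurate, and your positivity remark correctly pins down that $t^{k-1}$ is genuinely the leading term. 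Of course both arguments ultimately rest on the same insertion analysis (your Corollary \ref{c-uniC}(b) descends from Proposition \ref{prop:recp}), so the content is the same insertion count seen through the polynomial recurrence; what your version buys is brevity and a purely mechanical derivation from established results, while the paper's version buys a self-contained combinatorial explanation of the factor $(r-2)k+2$ as a count of admissible insertion slots. Your side remark about reading the same count off Proposition \ref{prop:recq} is also sound, though there the two relevant case counts each depend on the number of leading plateaus and only their sum is the constant $(r-2)k+2$, so it is worth phrasing as a summation over that parameter rather than a per-permutation count of two fixed case sizes.
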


\begin{proof}
We proceed by induction on $k$. The base case amounts to showing that there is exactly one permutation in $\P_{r,1}$ with 0 peaks, which is easily verified as $2134\cdots(r+1)$ is the only permutation in $\P_{r,1}$. Now, suppose that the result holds for some fixed $k\geq1$. Let $\sigma$ be a permutation in $\mathcal{P}_{r,k}$ with $k-1$ peaks; it suffices to show that there are $(r-2)k+2$ ways to insert the letters $rk+2,rk+3,\dots,r(k+1)+1$ to create a permutation in $\P_{r,k+1}$ with exactly one additional peak. Recall that we must append the letters $rk+3,\dots,r(k+1)+1$ to the end of $\sigma$, and doing so does not change the number of peaks. There are $rk+1$ positions that we may insert $rk+2$ to obtain a permutation in $\P_{r,k+1}$; however, to increase the number of peaks by one, we cannot insert it next to any of the $k-1$ existing peaks, nor can we insert it at the very beginning. This gives us $rk+1-2(k-1)-1=(r-2)k+2$ possibilities, as desired.
\end{proof}

We note that, when $r\geq3$, the quantity $\prod_{j=1}^{k-1}((r-2)j+2)$ also counts metasylvester classes of $(r-2)$-packed words of degree $k$, which are objects defined in \cite{Novelli2020}. 

\subsection{Differential equations}
Next, we derive differential equations satisfied by the exponential generating functions
\[
F_{r}(t,y,x)\coloneqq 1+ \sum_{k=1}^{\infty}C_{r,k}(t,y)\frac{x^{k}}{k!}\qquad\text{and}\qquad \Fl_{r}(t,y,x)\coloneqq 1+ \sum_{k=1}^{\infty}\Cl_{r,k}(t,y)\frac{x^{k}}{k!}
\]
for the polynomials $C_{r,k}(t,y)$ and $\Cl_{r,k}(t,y)$.

It will be convenient to write
\[
F_{r}(t,y,x)=\bar{F}_{r}(t,y,x)+\acute{F}_{r}(t,y,x)\quad\text{and}\quad \Fl_{r}(t,y,x)=y\bar{F}_{r}(t,y,x)+\acute{F}_{r}(t,y,x)
\]
by separating $r$-Stirling permutations that start with a plateau---counted by $\bar{F}_{r}(t,y,x)$---from those that do not---counted by $\acute{F}_{r}(t,y,x)$. We write $F_r\coloneqq F_r(t,y,x)$ and $\Fl_r \coloneqq \Fl_r(t,y,x)$ when it creates no confusion, and similarly with $\bar{F}_{r}$ and $\acute{F}_{r}$. 

All the derivatives in the following theorem and its proof are partial derivatives with respect to $x$, but we will use the prime symbol instead of $\partial/\partial x$ for convenience.

\begin{thm}\label{thm:diffeq}
Let $r\geq2$. The function $\Fl_r$ satisfies the differential equation
\begin{equation}\label{eq:G} \Fl^{\,\prime}_r=\left(t(y-1)+\Fl_r (\Fl_r-1+t)\right)\Fl_r^{\,r-1},\end{equation}
with initial condition $\Fl_r(t,y,0)=1$. Additionally,
\begin{equation}\label{eq:FG}F_r=e^{t(1-y)\int_0^x \Fl_r(t,y,u)^{r-2}\,du}\,\Fl_r.\end{equation}
In particular, for $r=2$, we have $F_2=e^{t(1-y)x} \Fl_2$, which satisfies the differential equation
$$F^{\,\prime}_2=e^{t(y-1)x}\left(e^{t(y-1)x} F_2-1+t\right)F^{\,2}_2,$$
with initial condition $F_2(t,y,0)=1$.
\end{thm}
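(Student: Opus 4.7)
The plan is to decompose each $r$-Stirling permutation $\rho$ of order $k+1$ using its $r$ copies of the letter $1$ as separators. For any letter $a\ge2$, the Stirling condition forbids a $1$ from lying strictly between two copies of $a$, so all $r$ copies of $a$ live in a single ``gap'' defined by the $1$s. This gives
$$\rho=s_0\cdot 1\cdot s_1\cdot 1\cdots 1\cdot s_r,$$
where $s_0,\ldots,s_r$ are (possibly empty) $r$-Stirling permutations whose label sets partition $\{2,\ldots,k+1\}$. Under the standard EGF product rule for labeled structures, this is precisely the decomposition that converts the derivative $\Fl^{\,\prime}_r$ (which shifts the order by one) into a sum involving $\Fl_r^{r+1}$.

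Next I would carefully track the three statistics across this decomposition. Since the $s_i$'s have disjoint label sets, leading plateaus inside each $s_i$ survive in $\rho$; the only new leading plateau comes from the letter $1$ itself, and occurs exactly when the first two copies of $1$ are adjacent, i.e.\ when $s_1=\emptyset$. A boundary analysis shows that the $1$s never create ascent-plateaus (the preceding letter is either another $1$ or some letter $\ge2$), while a position-$1$ plateau of $s_i$ for $i\ge1$ does become a genuine interior ascent-plateau of $\rho$. This yields
\begin{align*}
\lplat(\rho)   &= [s_1=\emptyset] + \sum_{i=0}^r\lplat(s_i),\\
\ascplat(\rho) &= \ascplat(s_0) + \sum_{i=1}^r\lascplat(s_i),\\
\lascplat(\rho)&= [s_0=s_1=\emptyset] + \sum_{i=0}^r\lascplat(s_i).
\end{align*}

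Converting these to EGFs and splitting by whether $s_0$ and $s_1$ are empty, the first and third formulas combine to give
$$\Fl^{\,\prime}_r=(\Fl_r-1)\Fl_r^r+t(\Fl_r-1)\Fl_r^{r-1}+ty\,\Fl_r^{r-1},$$
which rearranges to \eqref{eq:G}. The analogous computation using the first and second identities (where $s_0$ contributes a factor of $F_r$, since a position-$1$ plateau there is \emph{not} an ascent-plateau of $\rho$, while each $s_i$ with $i\ge1$ contributes $\Fl_r$) yields the companion identity $F^{\,\prime}_r=F_r(\Fl_r-1+t)\Fl_r^{r-1}$. Dividing these two ODEs produces $(\log F_r-\log\Fl_r)^{\prime}=t(1-y)\Fl_r^{r-2}$, and integrating from $0$ to $x$ using $F_r(t,y,0)=\Fl_r(t,y,0)=1$ gives \eqref{eq:FG}. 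The $r=2$ statement then follows by specializing $\Fl_2^{r-2}=1$ (so $F_2=e^{t(1-y)x}\Fl_2$) and substituting $\Fl_2=e^{t(y-1)x}F_2$ into \eqref{eq:G}.

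The main obstacle is the asymmetric role played by the first segment $s_0$: a position-$1$ plateau in $s_0$ contributes to $\lascplat(\rho)$ but not to $\ascplat(\rho)$, whereas for every other $s_i$ such a plateau is an interior position of $\rho$ preceded by a $1$ and so becomes a bona fide ascent-plateau. Getting this asymmetry right, together with the two indicator corrections $[s_1=\emptyset]$ and $[s_0=s_1=\emptyset]$ from the block of $1$s itself, is what produces the ``extra'' $t(y-1)\Fl_r^{r-1}$ term in \eqref{eq:G} and ultimately the exponential factor in \eqref{eq:FG}.
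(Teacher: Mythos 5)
Your proposal is correct and follows essentially the same route as the paper: the same decomposition $\rho=\tau_0 1\tau_1 1\cdots\tau_{r-1}1\tau_r$ at the block of $1$s, the same analysis of how initial plateaus of the blocks become ascent-plateaus of $\rho$, and the same final steps (deriving $F_r^{\,\prime}=F_r(\Fl_r-1+t)\Fl_r^{\,r-1}$, dividing, integrating, and specializing to $r=2$). The only cosmetic difference is bookkeeping: you track the statistics via explicit additive identities with indicator corrections, whereas the paper splits $F_r=\bar F_r+\acute F_r$ according to whether the permutation starts with a plateau and writes separate ODEs for the two parts before combining.
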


\begin{proof}
The proof relies on the fact, also used in \cite[Theorem 4.8]{Elizalde2021}, that every nonempty $r$-Stirling permutation $\rho\in\Q_{r,k}$ can be decomposed uniquely as 
\begin{equation}\label{eq:rho_decomposition} \rho=\tau_0 1 \tau_1 1\dots \tau_{r-1} 1 \tau_r,\end{equation}
where the $\tau_i$ are $r$-Stirling permutations on disjoint sets of letters. Indeed, a common letter in $\tau_i$ and $\tau_j$ for $i\neq j$, together with a letter $1$ in between, would mean that $\rho$ is not an $r$-Stirling permutation. Conversely, given a partition of $\{2,3,\dots,k\}$ into $r+1$ disjoint (possibly empty subsets) $S_0,S_1,\dots,S_r$, together with $r$-Stirling permutations $\tau_i$ with their letters relabeled to the elements of $S_i$ for each $i$, the concatenation $\tau_0 1 \tau_1 1\dots \tau_{r-1} 1 \tau_r$ is an $r$-Stirling permutation.

The decomposition~\eqref{eq:rho_decomposition} yields the equation
\begin{equation}\label{eq:F1} \bar{F}_r^{\,\prime}=\left(t+\bar{F}_r(\Fl_r-1+t)\right)\Fl_r^{\,r-1}.\end{equation}
Indeed, since $\rho$ starts with a plateau, either both $\tau_0$ and $\tau_1$ are empty, or $\tau_0$ starts with a plateau. In the first case, the leading plateau $11$ is accounted by the first $t$. In the second case, $\tau_0$ is accounted by $\bar{F}_r$, and $\tau_1$ is accounted by $\Fl_r-1$ if $\tau_1$ is not empty, since a potential initial plateau in $\tau_1$ would create an ascent-plateau in $\rho$, and by $t$ if $\tau_1$ is empty, since it creates a leading plateau $11$ in $\rho$.
Each of the $\tau_i$ for $i\ge2$ is accounted by a $\Fl_r$, since an initial plateau in $\tau_i$ would create an ascent-plateau in $\rho$, but in this case $\tau_i$ being empty does not create a leading plateau in $\rho$.

Similarly, we obtain
\begin{equation}\label{eq:F2} \acute{F}_r^{\,\prime}=(\Fl_r-1+(\acute{F}_r-1)(\Fl_r-1+t))\Fl_r^{\,r-1}.\end{equation}
In this case, since $\rho$ does not start with a plateau, we have two options. If $\tau_0$ is empty, then $\tau_1$ is not, and it is accounted by the first term $\Fl_r-1$. If $\tau_0$ is not empty, then it cannot start with a plateau, so it is accounted by $\acute{F}_r-1$, and in this case $\tau_1$ is accounted by $\Fl_r-1+t$ as before. Again, each of the $\tau_i$ for $i\ge2$ is accounted by a $\Fl_r$.

Multiplying Equation~\eqref{eq:F1} by $y$ and adding it to Equation~\eqref{eq:F2}, we obtain Equation~\eqref{eq:G}. We also obtain $F_r^{\,\prime}=F_r(\Fl_r-1+t)\Fl_r^{\,r-1}$ by adding Equations~\eqref{eq:F1} and~\eqref{eq:F2}. Using Equation~\eqref{eq:G}, this can be rewritten as $F_r^{\,\prime}/F_r=\Fl_r^{\,\prime}/\Fl_r+t(1-y)\Fl_r^{\,r-2}$. Integrating and exponentiating, we obtain Equation~\eqref{eq:FG}.

In the case $r=2$, this expression reduces to $F_2=e^{t(1-y)x}\Fl_2$, and substituting into Equation~\eqref{eq:G} we obtain the differential equation satisfied by $F_2$.
\end{proof}

Note that $F_2(1,y,x)$ is the exponential generating function for the $1/2$-Eulerian polynomials; see the discussion in Section \ref{sec:polynomial}. An expression for the exponential generating function for $1/r$-Eulerian polynomials is given in \cite{Savage2012}.

We note that there does not seem to be a straightforward way to prove Theorem \ref{thm:diffeq} using permutations in $\P_{r,k}$, as our decomposition for $\Q_{r,k}$ does not translate easily. Thus, our equidistribution results in Theorem \ref{thm:equidistribution} are crucial for interpreting Theorem \ref{thm:diffeq} in terms of the joint distributions of $(\des,\pk)$ and $(\des,\lpk)$ over $\P_{r,k}$.

We can also give differential equations for $F_r$ and $\Fl_r$ using our recurrences for the polynomials $C_{r,k}(t,y)$ and $\Cl_{r,k}(t,y)$. Unlike in Theorem \ref{thm:diffeq}, these differential equations involve partial derivatives with respect to all three variables, but one advantage here is that we obtain a differential equation for $F_r$ when $r>2$.

\begin{thm}
Let $r\geq2$. We have \leqnomode 
\begin{alignat*}{1}
\tag{{a}}(1-rtyx)\frac{\partial F_{r}}{\partial x}+(t-1)ty\frac{\partial F_{r}}{\partial t}+(1+t)(y-1)y\frac{\partial F_{r}}{\partial y} & =tF_{r}\quad\text{and}\\
\tag{{b}}(1-rtyx)\frac{\partial \Fl_{r}}{\partial x}+(t-1)ty\frac{\partial \Fl_{r}}{\partial t}+(1+t)(y-1)y\frac{\partial \Fl_{r}}{\partial y} & =ty\Fl_{r}
\end{alignat*}
with initial conditions $F_r(t,y,0)=\Fl_r(t,y,0)=1$.
\end{thm}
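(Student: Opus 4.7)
The plan is to convert the polynomial recurrences from Theorems~\ref{t-Crec} and~\ref{t-Clrec} directly into partial differential equations for $F_r$ and $\Fl_r$ by the standard generating function transform: multiply each recurrence by $x^k/k!$ and sum over $k\geq 1$. Since both recurrences have exactly the same shape (only the multiplicative prefactor on $C_{r,k}$ versus $\Cl_{r,k}$ differs), it suffices to carry out the computation once and note the minor change in the other case.

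For equation (a), I would start by identifying each piece of
\[
\sum_{k\geq 1} C_{r,k+1}(t,y)\,\frac{x^k}{k!} \;=\; \sum_{k\geq 1}\Bigl[(1+rky)tC_{r,k} + (1-t)ty\,\partial_t C_{r,k} + (1+t)(1-y)y\,\partial_y C_{r,k}\Bigr]\frac{x^k}{k!}.
\]
The left-hand side equals $\partial F_r/\partial x - C_{r,1}(t,y) = \partial F_r/\partial x - t$ by a shift of index and the initial condition $C_{r,1}(t,y)=t$. On the right-hand side, the partial-derivative terms pull through the summation to give $(1-t)ty\,\partial_t F_r$ and $(1+t)(1-y)y\,\partial_y F_r$ (the $k=0$ contributions vanish since $F_r(t,y,0)=1$ is constant in $t$ and $y$). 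The nontrivial term is the one containing the factor $k$: the identity $\sum_{k\geq 1} kC_{r,k}\frac{x^k}{k!} = x\,\partial F_r/\partial x$ converts $\sum_{k\geq 1}(1+rky)tC_{r,k}\frac{x^k}{k!}$ into $t(F_r-1) + rtyx\,\partial F_r/\partial x$. Collecting everything and cancelling the $-t$ with the $-t$ coming from $t(F_r-1)$ yields exactly equation~(a).

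For equation (b) the computation is identical except that the prefactor $(1+rky)t$ is replaced by $(1+rk)ty$, and the initial value shifts from $C_{r,1}(t,y)=t$ to $\Cl_{r,1}(t,y)=ty$. The $(1+rk)ty$ factor produces $ty(\Fl_r - 1) + rtyx\,\partial \Fl_r/\partial x$, and the leftover constant $ty$ cancels against the $-ty$ from the shifted left-hand side, leaving $ty\Fl_r$ on the right. The initial conditions $F_r(t,y,0)=\Fl_r(t,y,0)=1$ are immediate from the definitions.

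There is no real obstacle here; every step is a routine manipulation of exponential generating functions. The only thing to double check is the bookkeeping of the $k=0$ and $k=1$ terms when moving from sums over $k\geq 1$ of $C_{r,k}$ to the full series $F_r$, and the fact that the $rk$ factor contributes precisely a factor of $x$ after differentiating with respect to $x$, which is exactly what produces the $-rtyx\,\partial F_r/\partial x$ term on the left-hand side.
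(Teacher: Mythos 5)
Your proposal is correct and follows essentially the same route as the paper: multiply the recurrences of Theorems~\ref{t-Crec} and~\ref{t-Clrec} by $x^k/k!$, sum over $k\geq1$, use $\sum_{k\geq1}kC_{r,k}(t,y)\frac{x^k}{k!}=x\,\partial F_r/\partial x$ for the $rk$ term, and account for the boundary terms $C_{r,1}=t$ and $\Cl_{r,1}=ty$. The paper likewise proves only part (a) in detail and notes that (b) is nearly identical, so your bookkeeping matches its argument step for step.
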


\begin{proof}
We only prove part (a); the proof of part (b) is nearly identical.

Taking the recurrence for the polynomials $C_{r,k+1}(t,y)$ in Theorem \ref{t-Crec}, multiplying
both sides by $x^{k}/k!$, and summing over all $k\geq1$ yields {\allowdisplaybreaks
\begin{align*}
\sum_{k=1}^{\infty}C_{r,k+1}(t,y)\frac{x^{k}}{k!} & =\sum_{k=1}^{\infty}(1+rky)tC_{r,k}(t,y)\frac{x^{k}}{k!}+\sum_{k=1}^{\infty}(1-t)ty\frac{\partial}{\partial t}C_{r,k}(t,y)\frac{x^{k}}{k!}\\
 & \qquad+\sum_{k=1}^{\infty}(1+t)(1-y)y\frac{\partial}{\partial y}C_{r,k}(t,y)\frac{x^{k}}{k!}.
\end{align*}}
The left-hand side is given by
\[
\sum_{k=1}^{\infty}C_{r,k+1}(t,y)\frac{x^{k}}{k!}=\sum_{k=0}^{\infty}C_{r,k+1}(t,y)\frac{x^{k}}{k!}-C_{r,1}(t,y)=\frac{\partial F_{r}}{\partial x}-t,
\]
the first summand on the right-hand side is given by {\allowdisplaybreaks 
\begin{align*}
\sum_{k=1}^{\infty}tC_{r,k}(t,y)\frac{x^{k}}{k!}+\sum_{k=1}^{\infty}rktyC_{r,k}(t,y)\frac{x^{k}}{k!} & =t(F_{r}-1)+rytx\sum_{k=1}^{\infty}C_{r,k}(t,y)\frac{x^{k-1}}{(k-1)!}\\
 & =t(F_{r}-1)+rytx\sum_{k=0}^{\infty}C_{r,k+1}(t,y)\frac{x^{k}}{k!}\\
 & =tF_{r}+rytx\frac{\partial F_{r}}{\partial x}-t,
\end{align*}}
the second summand on the right-hand side is given by 
\[
\sum_{k=1}^{\infty}(1-t)ty\frac{\partial}{\partial t}C_{r,k}(t,y)\frac{x^{k}}{k!}=(1-t)ty\frac{\partial F_{r}}{\partial t},
\]
and the final summand on the right-hand side is given by 
\[
\sum_{k=1}^{\infty}(1+t)(1-y)y\frac{\partial}{\partial y}C_{r,k}(t,y)\frac{x^{k}}{k!}=(1+t)(1-y)y\frac{\partial F_{r}}{\partial y}.
\]
Combining all of these terms and rearranging appropriately yields
the desired equation.
\end{proof}

\section{Real-rootedness and asymptotic normality}\label{sec:roots}

In this section, we will work exclusively with the univariate polynomials 
$C_{r,k}^{\des}(t)$, $C_{r,k}^{\pk}(t)$ and  $C_{r,k}^{\lpk}(t)$ defined in Equations~\eqref{eq:Cdes-def}--\eqref{eq:Clpk-def}.

Brenti \cite[Theorem 6.6.3 (i)]{Brenti1989} proved that the polynomials $\sum_{\rho\in\Q_{r,k}}t^{\des(\rho)}$, which coincide with $C_{r,k}^{\des}(t)$ by Equation~\eqref{eq:Cdes-Qdes}, are \textit{real-rooted}, meaning that they have only real roots. This property has combinatorial and probabilistic significance. If a polynomial is real-rooted and all of its coefficients are non-negative, then its sequence of coefficients is log-concave and unimodal. Moreover, if a sequence of polynomials encodes the distribution of a sequence of random variables, then real-rootedness can be used to establish asymptotic normality---that is, the random variables (standardized appropriately) converge in distribution to the standard normal random variable. Indeed, it is known that the distribution of $\des$ (equivalently, $\lplat$) over $\Q_{r,k}$ is asymptotically normal as $k\rightarrow\infty$; this was proved by B\'ona \cite[Theorem 6]{Bona2008/09} in the case $r=2$ and by Janson, Kuba, and Panholzer \cite[Theorem 11]{Janson2011} for arbitrary $r\geq2$.

Here we show that the polynomials $C_{r,k}^{\pk}(t)$ and $C_{r,k}^{\lpk}(t)$ are also real-rooted, and prove that the distributions of $\pk$ and $\lpk$ over $\P_{r,k}$ (and thus those of $\ascplat$ and $\lascplat$ over $\Q_{r,k}$) are asymptotically normal. In fact, for real-rootedness, we will prove the stronger result that the polynomials $\{C_{r,k}^{\pk}(t)\}_{k\geq1}$ and $\{C_{r,k}^{\lpk}(t)\}_{k\geq1}$ are ``generalized Sturm sequences'', which we will define shortly. The approach we use leads to a rederivation of the corresponding results for $\des$ as well.

\subsection{Real-rootedness}\label{sec:realroots}

Let $\mathbb{R}_{\geq0}[t]$ be the subset of $\mathbb{R}[t]$ consisting of polynomials in $t$ with non-negative real coefficients. For two real-rooted polynomials $f,g\in\mathbb{R}[t]$, let $\{r_{i}\}$ be the roots of $f$ and let $\{s_{j}\}$ be the roots of $g$, both in non-increasing order. We say that $g$ \textit{alternates left of} $f$ if $\deg f=\deg g=n$ and 
\[
s_{n}\leq r_{n}\leq\cdots\leq s_{2}\leq r_{2}\leq s_{1}\leq r_{1},
\]
and we say that $g$ \textit{interlaces} $f$ if $\deg f=\deg g+1=n$ and 
\[
r_{n}\leq s_{n-1}\leq\cdots\leq s_{2}\leq r_{2}\leq s_{1}\leq r_{1}.
\]
We write $g\preceq f$ if either $g$ alternates left of $f$ or interlaces $f$. We also adopt the convention that $f\preceq0$ and $0\preceq f$ for any real-rooted $f\in\mathbb{R}[t]$, and that $a\preceq bt+c$ for any $a,b,c\in\mathbb{R}$.

A polynomial in $\mathbb{R}[t]$ is said to be \textit{standard} if it is identically zero or its leading coefficient is positive. (Clearly, all polynomials in $\mathbb{R}_{\geq0}[t]$ are standard.) We call a sequence of standard polynomials $\{f_{n}(t)\}_{n\geq0}$ a \textit{generalized Sturm sequence} if each of the polynomials $f_{n}(t)$ is real-rooted and $f_{0}\preceq f_{1}\preceq f_{2}\preceq\cdots$. The following lemma, which is a special case of a result due to Liu and Wang \cite[Corollary 2.4]{Liu2007}, gives a powerful way of showing that polynomial sequences satisfying a recurrence relation of a certain form are generalized Sturm sequences.
\begin{lem}
\label{l-gss}Let $\{f_{n}(t)\}_{n\geq0}$ be a sequence of polynomials in $\mathbb{R}_{\geq0}[t]$ satisfying the recurrence
\begin{equation}
f_{n}(t)=a_{n}(t)f_{n-1}(t)+b_{n}(t)f_{n-1}^{\prime}(t)\qquad(n\geq1),\label{e-gssrec}
\end{equation}
for some $a_{n}(t),b_{n}(t)\in\mathbb{R}[t]$, and such that $\deg f_{n}=\deg f_{n-1}$ or $\deg f_{n}=\deg f_{n-1}+1$. If $b_{n}(t)\leq0$ whenever $t\leq0$, then $\{f_{n}(t)\}_{n\geq0}$ is a generalized Sturm sequence.
\end{lem}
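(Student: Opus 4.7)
The plan is to prove the lemma by induction on $n$, maintaining the two conditions for a generalized Sturm sequence simultaneously: (i) each $f_n$ is real-rooted, and (ii) $f_{n-1} \preceq f_n$. The base case is automatic since $f_0$ is standard by hypothesis and $0 \preceq f_0$ by the stated conventions.

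For the inductive step, suppose $f_{n-1}$ is real-rooted. Since $f_{n-1} \in \mathbb{R}_{\geq 0}[t]$, its roots lie in $(-\infty, 0]$. List them with multiplicity as $0 \geq r_1 \geq r_2 \geq \cdots \geq r_m$, where $m = \deg f_{n-1}$. Evaluating the recurrence~\eqref{e-gssrec} at $t = r_i$ and using $f_{n-1}(r_i) = 0$ yields
\[
f_n(r_i) = b_n(r_i)\, f_{n-1}'(r_i).
\]
The hypothesis on $b_n$ gives $b_n(r_i) \leq 0$, and Rolle's theorem shows that the roots of $f_{n-1}'$ strictly interlace the distinct $r_i$'s, so $\mathrm{sign}(f_{n-1}'(r_i))$ alternates with $i$ (assuming simple roots for the moment). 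Consequently $\mathrm{sign}(f_n(r_i))$ alternates, and the intermediate value theorem produces a root of $f_n$ strictly between each pair of consecutive distinct $r_i$'s, accounting for $m-1$ real roots of $f_n$.

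It then remains to locate the extra root in the case $\deg f_n = m + 1$ and to verify the proper order. Since $f_n \in \mathbb{R}_{\geq 0}[t]$ forces any real roots of $f_n$ to be non-positive, examining the sign of $f_n(r_m)$ together with the behavior of $f_n$ as $t \to -\infty$ pins down whether an additional root lies in $(-\infty, r_m)$: the case $\deg f_n = m$ gives alternation (no extra root), while $\deg f_n = m+1$ supplies exactly one extra root to the left of $r_m$, yielding interlacing. I expect the main obstacle to be handling repeated roots of $f_{n-1}$, where $f_{n-1}'$ vanishes at an $r_i$ and the sign-alternation step degenerates. The cleanest remedy is a perturbation argument: approximate $f_{n-1}$ uniformly by real-rooted polynomials with simple roots, apply the argument above to each approximant, and pass to the limit using the continuity of the roots of a polynomial with respect to its coefficients. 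This avoids case analysis on multiplicities while preserving the relations $f_{n-1} \preceq f_n$ and the real-rootedness of $f_n$ in the limit.
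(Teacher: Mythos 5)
The paper itself gives no proof of this lemma (it is quoted as a special case of Liu and Wang's Corollary 2.4), so you are supplying a direct argument, and your basic strategy---evaluate the recurrence at the roots of $f_{n-1}$, use that these roots are nonpositive so that $b_n\le 0$ there, and turn the Rolle sign alternation of $f_{n-1}'$ into sign changes of $f_n$---is indeed the standard route behind results of this type. But as written there is a genuine gap in the root bookkeeping. Writing $m=\deg f_{n-1}$ and $0\ge r_1\ge\cdots\ge r_m$ for the roots of $f_{n-1}$, both target relations require a root of $f_n$ weakly to the \emph{right} of $r_1$: ``alternates left of'' demands the $m-1$ gap roots plus one root $\ge r_1$, and ``interlaces'' demands those plus one root $\le r_m$. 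Your count locates only the $m-1$ gap roots (plus, in the degree-jump case, the root left of $r_m$), so your assertion that the equal-degree case ``gives alternation (no extra root)'' is not correct, and in the interlacing case the position of the last root relative to $r_1$ is never established. The missing step is exactly where $f_n\in\mathbb{R}_{\ge0}[t]$ must be used on the right end: $f_n(r_1)=b_n(r_1)f_{n-1}'(r_1)\le 0$ while $f_n(t)>0$ for all $t>0$ (nonnegative coefficients, positive leading coefficient), so $f_n$ has a root in $[r_1,0]$; only with this root in hand do the two cases close up.

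The degenerate cases are also treated too lightly. Besides repeated roots of $f_{n-1}$, the sign argument degenerates when $b_n(r_i)=0$ (then $f_n(r_i)=0$), and this case actually occurs in the paper's application: $b_n(t)=ct(1-t)$ vanishes at $t=0$, and $C_{r,k}^{\des}(t)$ and $C_{r,k}^{\lpk}(t)$ have $t=0$ as a root. So your perturbation must move the roots of $f_{n-1}$ to distinct, strictly negative points avoiding the zeros of $b_n$---and, more seriously, ``apply the argument above to each approximant'' is not automatic: the argument uses properties of $f_n$ itself (nonnegative coefficients, positive leading coefficient, $\deg f_n\in\{m,m+1\}$), which are hypotheses about the true sequence and need not be inherited by $a_n g_\epsilon+b_n g_\epsilon'$; the approximant's top coefficient may be nonzero where $f_n$'s vanishes, and may even be negative, which changes the right-end and $t\to-\infty$ analyses. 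This can be repaired (control the approximants' top coefficients, or pass to the limit of the weak interlacing inequalities allowing a root to escape to infinity), or avoided entirely by factoring out common multiple roots and doing the sign count with multiplicities as Liu and Wang do; but as stated the reduction is a plan rather than a proof. (Minor point: your induction also needs $f_0$ real-rooted to start; this is harmless here since the paper applies the lemma with $\deg f_0\le 1$.)
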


\begin{thm}
The polynomial sequences $\{C_{r,k}^{\des}(t)\}_{k\geq1}$, $\{C_{r,k}^{\pk}(t)\}_{k\geq1}$, and $\{C_{r,k}^{\lpk}(t)\}_{k\geq1}$ are generalized Sturm sequences \textup{(}and therefore are real-rooted\textup{)} for every $r\geq2$.
\end{thm}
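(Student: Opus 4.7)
The plan is to apply Lemma \ref{l-gss} directly to each of the three polynomial sequences, using the recurrences furnished by Corollary \ref{c-uniC}. Each of those recurrences is already in the shape $f_{n}(t)=a_{n}(t)f_{n-1}(t)+b_{n}(t)f_{n-1}'(t)$ required by the lemma: for $\{C_{r,k}^{\des}(t)\}$ we have $a_{k+1}(t)=(1+rk)t$ and $b_{k+1}(t)=t(1-t)$; for $\{C_{r,k}^{\pk}(t)\}$ we have $a_{k+1}(t)=1+rkt$ and $b_{k+1}(t)=2t(1-t)$; and for $\{C_{r,k}^{\lpk}(t)\}$ we have $a_{k+1}(t)=(1+rk)t$ and $b_{k+1}(t)=2t(1-t)$. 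In all three cases $b_{k+1}(t)$ is a non-positive multiple of $t(1-t)$, which is $\le 0$ for $t\le 0$, so the sign hypothesis of Lemma \ref{l-gss} is automatic.

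The remaining hypotheses to verify are that each $f_{k}(t)$ lies in $\mathbb{R}_{\ge0}[t]$ and that the degree either stays the same or grows by one under the recurrence. Non-negativity of coefficients is immediate from the combinatorial definitions \eqref{eq:Cdes-def}--\eqref{eq:Clpk-def}. For the degrees, I will argue by a short induction. The tables in Section \ref{sec:polynomial} suggest $\deg C_{r,k}^{\des}(t)=k$, $\deg C_{r,k}^{\pk}(t)=k-1$, and $\deg C_{r,k}^{\lpk}(t)=k$, which serve as the inductive hypotheses. For each recurrence I extract the coefficient of $t^{\deg f_{k}+1}$ in $f_{k+1}$: it equals $(1+rk-d)$, $(rk-2d)$, or $(1+rk-2d)$ times the leading coefficient of $f_k$, respectively, where $d$ is the degree assumed inductively. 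Substituting $d$ in each case gives $1+(r-1)k$, $(r-2)k+2$, and $1+(r-2)k$, all strictly positive for $r\ge 2$ and $k\ge 1$. Hence the degree increases by exactly one at every step, and the leading coefficient remains positive.

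Having checked all hypotheses, Lemma \ref{l-gss} yields that each of the three sequences is a generalized Sturm sequence; in particular every polynomial in the sequence is real-rooted. The base case $k=1$ causes no issue since $C_{r,1}^{\des}(t)=t$, $C_{r,1}^{\pk}(t)=1$, and $C_{r,1}^{\lpk}(t)=t$ are trivially real-rooted standard polynomials. The only mildly delicate step is the degree computation, since one must ensure that no unexpected cancellation occurs in the coefficient of the top-degree term; but as shown above the relevant factors are uniformly positive for $r\ge 2$ and $k\ge 1$, so this potential obstacle evaporates and the proof is essentially a clean application of the Liu--Wang criterion.
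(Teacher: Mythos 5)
Your proposal is correct and follows essentially the same route as the paper: apply the Liu--Wang criterion (Lemma \ref{l-gss}) to the recurrences in Corollary \ref{c-uniC}, noting that $t(1-t)\le 0$ for $t\le 0$ and that the degree grows by one at each step (the paper writes out only the $\pk$ case and notes the others are similar, while you verify the degree/leading-coefficient condition explicitly for all three). The only nitpick is the phrase ``non-positive multiple of $t(1-t)$'' --- the multiples are the positive constants $1$ and $2$; what you mean, and what matters, is that $b_{k+1}(t)$ is a positive multiple of $t(1-t)$ and hence nonpositive for $t\le 0$.
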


\begin{proof}
We prove the result for $C_{r,k}^{\pk}(t)$; the proofs for $C_{r,k}^{\des}(t)$ and $C_{r,k}^{\lpk}(t)$ are similar.

Fix $r\geq2$. Recall from Corollary \ref{c-uniC} (b) the recurrence 
\[
C_{r,k}^{\pk}(t)=[1+r(k-1)t]C_{r,k-1}^{\pk}(t)+2t(1-t)\frac{d}{dt}C_{r,k-1}^{\pk}(t)\qquad(k\geq2),
\]
which is of the form (\ref{e-gssrec}) up to shifting indices. It is clear from this recurrence (and the initial condition $C_{r,1}^{\pk}(t)=1$) that $\deg C_{r,k}^{\pk}(t)=\deg C_{r,k-1}^{\pk}(t)+1$ for every $k\geq2$. Since $2t(1-t)\leq0$ for all $t\leq0$, we may apply Lemma \ref{l-gss} to conclude that $\{C_{r,k}^{\pk}(t)\}_{k\geq1}$ is a generalized Sturm sequence.
\end{proof}

\subsection{Asymptotic normality}

We now turn our attention to asymptotic normality. Every sequence $\{f_{n}(t)\}_{n\geq0}$ of non-zero polynomials in $\mathbb{R}_{\geq0}[t]$ induces a sequence of random variables $\{X_{n}\}_{n\geq0}$ defined by 
\[
\mathbb{P}(X_{n}=j)=\frac{[t^{j}]f_{n}(t)}{f_{n}(1)}
\]
for all $n,j\geq0$. Note that $\sum_j \mathbb{P}(X_n = j) = 1$. We let $\mu_{n}\coloneqq\mathbb{E}(X_{n})$ denote the mean of $X_{n}$ and $\sigma^{\,2}_{n}\coloneqq\mathbb{V}(X_{n})$ its variance. Let us write $X_{n}\sim{\cal N}(\mu_n,\sigma_n^{2})$ if the standardized random variables $(X_{n}-\mu_n)/\sigma_n$ converge in distribution to the standard normal random variable as $n\rightarrow\infty$; when this occurs, we say that the distribution of $X_{n}$ is \textit{asymptotically normal}. 

The following lemma, due to Bender \cite{Bender1973}, connects real-rootedness to asymptotic normality.
\begin{lem}
\label{l-an1}Let $\{f_{n}(t)\}_{n\geq0}$ be a sequence of non-zero polynomials in $\mathbb{R}_{\geq0}[t]$ and let $\{X_{n}\}_{n\geq0}$ be its corresponding sequence of random variables. If all of the polynomials $f_{n}(t)$ are real-rooted and $\sigma_{n}\rightarrow\infty$ as $n\rightarrow\infty$, then $X_{n}\sim{\cal N}(\mu_{n},\sigma^{\,2}_{n})$.
\end{lem}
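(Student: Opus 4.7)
The plan is to use real-rootedness together with non-negativity of the coefficients to factor $f_n(t)$, write $X_n$ as a sum of independent Bernoulli variables, and then apply the Lyapunov central limit theorem, with the hypothesis $\sigma_n\to\infty$ supplying the requisite Lyapunov condition.

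First I would observe that every root of $f_n$ must be non-positive: a positive real root combined with non-negativity of the coefficients would force a sign change in the coefficient sequence. Setting $d_n\coloneqq\deg f_n$, this allows me to factor $f_n(t)=c_n\prod_{i=1}^{d_n}(t+a_{n,i})$ with $c_n>0$ and each $a_{n,i}\ge 0$. Dividing through by $f_n(1)$ gives the probability generating function
\[\mathbb{E}(t^{X_n})=\frac{f_n(t)}{f_n(1)}=\prod_{i=1}^{d_n}\frac{t+a_{n,i}}{1+a_{n,i}}=\prod_{i=1}^{d_n}\bigl((1-p_{n,i})+p_{n,i}\,t\bigr),\]
where $p_{n,i}\coloneqq 1/(1+a_{n,i})\in(0,1]$. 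Because this generating function factors as a product over $i$, $X_n$ has the same distribution as the independent sum $Y_{n,1}+\cdots+Y_{n,d_n}$, where each $Y_{n,i}$ is Bernoulli with parameter $p_{n,i}$. In particular, $\mu_n=\sum_i p_{n,i}$ and $\sigma_n^{\,2}=\sum_i p_{n,i}(1-p_{n,i})$.

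To conclude asymptotic normality, I would verify Lyapunov's condition at exponent $3$. Since $|Y_{n,i}-p_{n,i}|\le 1$ almost surely, its third absolute moment is bounded by its variance, so
\[\sum_{i=1}^{d_n}\mathbb{E}\bigl|Y_{n,i}-p_{n,i}\bigr|^3\le\sum_{i=1}^{d_n}p_{n,i}(1-p_{n,i})=\sigma_n^{\,2}.\]
The Lyapunov ratio is therefore at most $\sigma_n^{\,2}/\sigma_n^{\,3}=1/\sigma_n$, which tends to $0$ by hypothesis. Lyapunov's central limit theorem then yields $(X_n-\mu_n)/\sigma_n\Rightarrow\mathcal{N}(0,1)$, which is precisely the conclusion $X_n\sim\mathcal{N}(\mu_n,\sigma_n^{\,2})$.

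The argument is essentially mechanical once the Bernoulli decomposition is in hand; the one mild subtlety is confirming that $f_n$ genuinely splits over $\mathbb{R}$ with non-positive roots, which is automatic from combining real-rootedness with non-negativity of the coefficients. This is the standard route to Bender's classical theorem, so I do not anticipate any genuine obstacle.
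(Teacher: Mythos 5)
Your argument is correct, and it is worth noting that the paper itself gives no proof of this lemma: it is quoted from Bender's 1973 paper, whose general central limit theorem is established by analytic arguments on the generating functions and does not presuppose real-rootedness. What you have written is the standard Harper-style proof of exactly the special case stated here: real-rootedness plus non-negative coefficients forces all roots to be non-positive (a positive root is impossible since a non-zero polynomial with non-negative coefficients is strictly positive on $(0,\infty)$), the normalized polynomial $f_n(t)/f_n(1)$ then factors as a product of affine probability generating functions, and $X_n$ is distributed as an independent sum of Bernoulli variables, to which Lyapunov's condition applies with ratio at most $1/\sigma_n\rightarrow 0$. All the steps check out, including the treatment of roots at $0$ (which give $p_{n,i}=1$, i.e.\ deterministic summands) and the bound $\mathbb{E}|Y_{n,i}-p_{n,i}|^{3}\leq\mathbb{E}|Y_{n,i}-p_{n,i}|^{2}$ coming from $|Y_{n,i}-p_{n,i}|\leq1$; the finitely many $n$ with $\sigma_n=0$ are irrelevant to the limit. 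So your route is more elementary and self-contained than invoking Bender, at the cost of proving only the real-rooted case rather than Bender's more general criterion --- which is all that is needed for the application in the paper.
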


In the next lemma we state some results of Hwang, Chern, and Duh \cite[Theorem 1 and Equation (14)]{Hwang2020} concerning sequences $\{f_{n}(t)\}_{n\geq0}$ satisfying a family of recurrences which generalizes the well-known linear recurrence of the Eulerian polynomials. Crucially, not only does this lemma give another way of establishing asymptotic normality, but it also allows us to calculate the mean and give an asymptotic estimate for the variance.
\begin{lem}
\label{l-an2}Let $\{f_{n}(t)\}_{n\geq0}$ be a sequence of non-zero polynomials in $\mathbb{R}_{\geq0}[t]$ and let $\{X_{n}\}_{n\geq0}$ be its corresponding sequence of random variables. Suppose that $f_{n}(t)$ satisfies the recurrence 
\begin{equation}
f_{n}(t)=[\alpha(t)n+\gamma(t)]f_{n-1}(t)+\beta(t)(1-t)f_{n-1}^{\prime}(t)\qquad(n\geq1),\label{e-anrec}
\end{equation}
for some $\alpha(t),\beta(t),\gamma(t)\in\mathbb{R}[t]$. Let us write $\alpha\coloneqq\alpha(1)$, $\beta\coloneqq\beta(1)$, $\gamma\coloneqq\gamma(1)$, and assume $\alpha+2\beta>0$. Furthermore, let
\[
\mu\coloneqq\frac{\alpha^{\prime}(1)}{\alpha+\beta}\quad\text{and}\quad\sigma^{2}\coloneqq\mu+\frac{\alpha^{\prime\prime}(1)-2\mu\beta^{\prime}(1)-\alpha\mu^{2}}{\alpha+2\beta},
\]
and assume $\sigma^{2}>0$.
\begin{enumerate}
\item [\normalfont{(a)}] The mean $\mu_{n}$ of $X_{n}$ satisfies the recurrence 
\[
\mu_{n}=\left(1-\frac{\beta}{\alpha n+\gamma}\right)\mu_{n-1}+\frac{\alpha^{\prime}(1)n+\gamma^{\prime}(1)}{\alpha n+\gamma}\qquad(n\geq1)
\]
with $\mu_{0}=f_{0}^{\prime}(1)/f_{0}(1)$. In addition, we have $\mu_{n}\sim\mu n$.
\item [\normalfont{(b)}] We have $\sigma^{\,2}_{n}\sim\sigma^{2}n$.
\item [\normalfont{(c)}] We have $X_{n}\sim{\cal N}(\mu n,\sigma^{2}n)$.
\end{enumerate}
\end{lem}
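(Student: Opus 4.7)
The plan is to extract asymptotic information about $X_n$ from the recurrence \eqref{e-anrec} by repeatedly differentiating it at $t=1$. Evaluating \eqref{e-anrec} at $t=1$ gives $f_n(1)=[\alpha n+\gamma]f_{n-1}(1)$ since the $(1-t)f_{n-1}'$ term vanishes; iterating yields $f_n(1)=f_0(1)\prod_{k=1}^{n}(\alpha k+\gamma)$, which is positive for large $n$ by standardness together with the hypothesis $\alpha+2\beta>0$ (which in particular prevents $\alpha k+\gamma$ from having the wrong sign asymptotically).

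For part (a), I would differentiate \eqref{e-anrec} once with respect to $t$ and set $t=1$, using the product rule on $\beta(t)(1-t)$ to obtain the contribution $-\beta f_{n-1}'(1)$ (the $(1-t)f_{n-1}''$ term is killed). This yields
$$f_n'(1)=[\alpha'(1)n+\gamma'(1)]f_{n-1}(1)+[\alpha n+\gamma-\beta]f_{n-1}'(1).$$
Dividing by $f_n(1)=(\alpha n+\gamma)f_{n-1}(1)$ gives exactly the stated recurrence for $\mu_n=f_n'(1)/f_n(1)$. To extract $\mu_n\sim\mu n$, rewrite this as
$$\mu_n-\mu_{n-1}=\frac{\alpha'(1)n+\gamma'(1)-\beta\mu_{n-1}}{\alpha n+\gamma}.$$
A routine induction shows $\mu_n=O(n)$; then dividing by $n$ and passing to a subsequential limit $L$ of $\mu_n/n$ gives the fixed-point equation $L=\alpha'(1)/\alpha-\beta L/\alpha$, forcing $L=\alpha'(1)/(\alpha+\beta)=\mu$.

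For part (b), I would differentiate \eqref{e-anrec} twice at $t=1$ to obtain a recurrence for $f_n''(1)$. Combining with the identity $\sigma_n^{\,2}=f_n''(1)/f_n(1)+\mu_n-\mu_n^2$ and the first-order asymptotic from (a), the same fixed-point style analysis produces $\sigma_n^{\,2}\sim\sigma^2 n$. The delicate point is the cancellation between the $O(n^2)$ contributions from $f_n''(1)/f_n(1)$ and from $\mu_n^2$; the explicit constant $\sigma^2=\mu+(\alpha''(1)-2\mu\beta'(1)-\alpha\mu^2)/(\alpha+2\beta)$ records precisely the residual linear term, and the hypothesis $\sigma^2>0$ ensures this term is genuinely of order $n$ rather than lower order.

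Part (c) is the main obstacle. The clean route is the quasi-power framework of Hwang: form the bivariate generating function $F(z,s)\coloneqq\sum_{n\ge 0}f_n(e^s)z^n$ (or an exponential variant), translate \eqref{e-anrec} into a first-order linear PDE in $(z,s)$, and solve by characteristics. One then shows, via singularity analysis uniform in $s$ in a complex neighborhood of $0$, that the cumulant generating function of $X_n$ admits a quasi-power expansion $n\lambda(s)+O(1)$ with $\lambda'(0)=\mu$ and $\lambda''(0)=\sigma^2$. Curtiss's theorem converts pointwise convergence of the moment generating function of $(X_n-\mu n)/\sqrt{\sigma^2 n}$ to $e^{s^2/2}$ into convergence in distribution to $\mathcal{N}(0,1)$. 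The technical heart is the uniform singularity analysis, which is precisely where the hypotheses $\alpha+2\beta>0$ and $\sigma^2>0$ are used to guarantee analyticity of the relevant dominant singularity; this is the step that Hwang--Chern--Duh carry out in detail and is the main difficulty of the lemma.
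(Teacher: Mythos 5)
This lemma is not proved in the paper at all: it is quoted from Hwang, Chern, and Duh \cite[Theorem 1 and Equation (14)]{Hwang2020}, and the paper explicitly records that those authors prove part (c) by the method of moments (convergence of the central moments of $(X_n-\mu n)/(\sigma\sqrt{n})$ to the Gaussian moments, combined with the Fr\'echet--Shohat theorem). Measured against that, your treatment of part (a) is fine in outline: evaluating and differentiating the recurrence at $t=1$ gives exactly the stated recurrence for $\mu_n$ (note that $f_n(1)>0$ already follows from $f_n$ being a non-zero polynomial with non-negative coefficients, not from $\alpha+2\beta>0$), though the ``subsequential limit / fixed point'' step needs more care than you give it --- the relation $\mu_n-\mu_{n-1}=(\alpha'(1)n+\gamma'(1)-\beta\mu_{n-1})/(\alpha n+\gamma)$ only yields $\mu_n\sim\mu n$ after an honest argument (solve the linear recurrence by summation, or a Stolz--Ces\`aro/comparison argument), since a subsequential limit of $\mu_n/n$ does not by itself satisfy the fixed-point equation you write.

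The genuine gaps are in parts (b) and (c). For (b) you name the right identity $\sigma_n^{\,2}=f_n''(1)/f_n(1)+\mu_n-\mu_n^{2}$ but do not carry out the second differentiation or the cancellation of the $O(n^2)$ terms, which is the entire content of the claim; the hypothesis $\sigma^{2}>0$ plays no role in deriving $\sigma_n^{\,2}\sim\sigma^{2}n$ (it is needed so that the normalization in (c) is meaningful), so your remark about it is off target. For (c), your plan --- bivariate generating function, first-order PDE solved by characteristics, singularity analysis uniform in $s$, quasi-powers, Curtiss's theorem --- is not a proof but a programme, and you defer its technical heart (``the uniform singularity analysis'') to Hwang--Chern--Duh. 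That deferral does not close the gap, because it is not what they do: at the level of generality of this lemma the solution of the PDE need not have a singularity structure amenable to uniform transfer, and this is precisely why the cited proof proceeds instead by establishing recurrences for all higher centered moments, extracting their linear asymptotics as in (a)--(b), and invoking Fr\'echet--Shohat. So as written, part (c) of your proposal neither contains an argument nor points to one that exists in the form you describe; to complete it along your lines you would have to actually perform the uniform singularity analysis for the characteristics solution (nontrivial and possibly false in this generality), whereas the method-of-moments route is the one that works under only the stated hypotheses $\alpha+2\beta>0$ and $\sigma^{2}>0$.
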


To prove the result stated in part (c), the authors use the ``method of moments'', which amounts to showing that the central moments of the random variables $(X_{n}-\mu n)/(\sigma\sqrt{n})$ converge to the central moments of the standard normal random variable; by a classical convergence theorem of Fr\'{e}chet and Shohat \cite{Frechet1931}, this is sufficient to establish asymptotic normality.

Let $\{X_{r,k}^{\des}\}_{k\geq1}$, $\{X_{r,k}^{\pk}\}_{k\geq1}$, and $\{X_{r,k}^{\lpk}\}_{k\geq1}$ be the sequences of random variables corresponding to $\{C_{r,k}^{\des}(t)\}_{k\geq1}$, $\{C_{r,k}^{\pk}(t)\}_{k\geq1}$, and $\{C_{r,k}^{\lpk}(t)\}_{k\geq1}$, respectively.
\begin{prop}
\label{p-meanvar}Let 
\[
c_{r,k}\coloneqq\frac{r\sin(r^{-1}\pi)(\Gamma(1+r^{-1}))^{2}\Gamma(k-r^{-1})}{\pi(r+2)\Gamma(k+r^{-1})}.
\]
Then, for all $r\geq2$ and $k\geq1$, we have {\allowdisplaybreaks 
\begin{align*}
\mathbb{E}(X_{r,k}^{\des}) & =\frac{rk+1}{r+1}\sim\frac{rk}{r+1}, & \mathbb{V}(X_{r,k}^{\des}) & \sim\frac{r^{2}k}{(r+1)^{2}(r+2)},\\
\mathbb{E}(X_{r,k}^{\pk}) & =\frac{(2k-1)r}{2(r+2)}-\frac{r}{2}c_{r,k}\sim\frac{rk}{r+2}, & \mathbb{V}(X_{r,k}^{\pk}) & \sim\frac{2r^{2}k}{(r+2)^{2}(r+4)},\\
\mathbb{E}(X_{r,k}^{\lpk}) & =\frac{rk+1}{r+2}+c_{r,k}\sim\frac{rk}{r+2},\text{ and} & \mathbb{V}(X_{r,k}^{\lpk}) & \sim\frac{2r^{2}k}{(r+2)^{2}(r+4)}.
\end{align*}}
\end{prop}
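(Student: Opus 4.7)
The plan is to apply Lemma~\ref{l-an2} to each of the three polynomial sequences $\{C_{r,k}^{\des}\}_{k\geq1}$, $\{C_{r,k}^{\pk}\}_{k\geq1}$, and $\{C_{r,k}^{\lpk}\}_{k\geq1}$. After shifting indices (replacing $k+1$ by $n$), the recurrences in Corollary~\ref{c-uniC} fit the template $f_n=[\alpha(t)n+\gamma(t)]f_{n-1}+\beta(t)(1-t)f_{n-1}^{\,\prime}$ with $\alpha(t)=rt$ in all three cases, $\beta(t)=t$ for $\des$ and $\beta(t)=2t$ for $\pk$ and $\lpk$, and $\gamma(t)$ equal to $t(1-r)$, $1-rt$, or $t(1-r)$, respectively. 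In particular, $\alpha=r$, $\alpha'(1)=r$, $\alpha''(1)=0$, and these values immediately give $\mu=r/(r+1)$ for $\des$ and $\mu=r/(r+2)$ for $\pk$ and $\lpk$, and then a short computation yields $\sigma^2=r^2/((r+1)^2(r+2))$ and $\sigma^2=2r^2/((r+2)^2(r+4))$. The asymptotic variance estimates are then immediate from part~(b) of the lemma, and the asymptotic mean estimates from part~(a).

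For the exact mean formulas, I will solve the first-order linear recurrence given by part~(a) of the lemma in each case, using the respective initial conditions $\mu_1=\des(2134\cdots(r+1))=1$, $\mu_1=\pk(2134\cdots(r+1))=0$, and $\mu_1=\lpk(2134\cdots(r+1))=1$. For $\des$, the recurrence reduces to
\[
\mu_k=\frac{r(k-1)}{r(k-1)+1}\mu_{k-1}+1,
\]
and a one-line induction gives $\mu_k=(rk+1)/(r+1)$.

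For $\pk$ and $\lpk$, the corresponding mean recurrences become
\[
\mu_k=\frac{r(k-1)-1}{r(k-1)+1}\mu_{k-1}+\frac{r(k-1)}{r(k-1)+1}\quad\text{and}\quad \mu_k=\frac{r(k-1)-1}{r(k-1)+1}\mu_{k-1}+1,
\]
respectively. The essential observation is that $c_{r,k}$ is designed precisely to match the homogeneous part: using $\Gamma(z+1)=z\Gamma(z)$, one verifies that
\[
\frac{c_{r,k}}{c_{r,k-1}}=\frac{k-1-r^{-1}}{k-1+r^{-1}}=\frac{r(k-1)-1}{r(k-1)+1},
\]
and Euler's reflection formula $\Gamma(r^{-1})\Gamma(1-r^{-1})=\pi/\sin(r^{-1}\pi)$ together with $\Gamma(1+r^{-1})=r^{-1}\Gamma(r^{-1})$ gives the base value $c_{r,1}=1/(r+2)$. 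With these two facts in hand, I will verify by induction that the claimed closed forms $(2k-1)r/(2(r+2))-(r/2)c_{r,k}$ and $(rk+1)/(r+2)+c_{r,k}$ satisfy the respective recurrences and initial conditions; in each induction step the $(B/A)c_{r,k-1}$ term collapses to $c_{r,k}$, and the remaining polynomial-in-$k$ pieces combine to advance the leading linear part by one.

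The hard part will be the bookkeeping in the $\pk$ case, where the inhomogeneous term $r(k-1)/(r(k-1)+1)$ is a non-constant rational function of $k$; one must verify that the specific coefficient $-r/2$ in front of $c_{r,k}$ and the shift from $(2k-3)r/(2(r+2))$ to $(2k-1)r/(2(r+2))$ fit together precisely. I expect this to reduce, after clearing denominators, to the identity $(2k-3)r+(r+2)=2(r(k-1)+1)$, which is routine algebra. Once these inductions are complete, the asymptotic equivalences in the statement follow because $c_{r,k}=\Theta(k^{-2/r})\to 0$ as $k\to\infty$, so the $c_{r,k}$ terms contribute only lower-order corrections and the means are dominated by their linear-in-$k$ components, matching the $\mu k$ prediction of Lemma~\ref{l-an2}(a).
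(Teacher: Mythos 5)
Your proposal is correct and follows essentially the same route as the paper: identify the recurrences of Corollary \ref{c-uniC} with the template of Lemma \ref{l-an2} (with $\alpha(t)=rt$ and $\beta(t)=t$ or $2t$), read off $\mu$ and $\sigma^2$ to get the asymptotic means and variances, and then solve the resulting first-order mean recurrences for the exact formulas; your parameter values, the recurrences $\mu_k=\tfrac{r(k-1)}{r(k-1)+1}\mu_{k-1}+1$, etc., the ratio $c_{r,k}/c_{r,k-1}=\tfrac{r(k-1)-1}{r(k-1)+1}$, and the value $c_{r,1}=\tfrac{1}{r+2}$ all check out, and the inductions close as claimed. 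The only difference is that the paper obtains the exact formula for $\mathbb{E}(X_{r,k}^{\pk})$ by feeding the recurrence to Maple's \texttt{rsolve} (detailing only the $\pk$ case), whereas you verify the closed forms by hand via the $\Gamma$-function identities, which is a slightly more self-contained way of carrying out the same step.
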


While the exact formulas for $\mathbb{E}(X_{r,k}^{\pk})$ and $\mathbb{E}(X_{r,k}^{\lpk})$ are fairly complicated expressions involving the gamma function, we point out that in the $r=2$ case they simplify to 
\[
\mathbb{E}(X_{2,k}^{\pk})=\frac{k(k-1)}{2k-1}\quad\text{and}\quad\mathbb{E}(X_{2,k}^{\lpk})=\frac{k^{2}}{2k-1}.
\]
These exact formulas for the means, even if not strictly necessary for proving asymptotic normality (the asymptotic estimates suffice), allow us to locate the mode(s) of the corresponding distributions via Darroch's theorem; see \cite{Darroch1964,Pitman1997} for details.
\begin{proof}[Proof of Proposition \ref{p-meanvar}]
We give the proof for $X_{r,k}^{\pk}$; the results for $X_{r,k}^{\des}$ and $X_{r,k}^{\lpk}$ are obtained in the same way. Observe that the recurrence for the polynomials $C_{r,k}^{\pk}(t)$ is of the form (\ref{e-anrec})---with $\alpha(t)=rt$, $\beta(t)=2t$, and $\gamma(t)=1-rt$---up to shifting indices. Since $\alpha=r$, $\beta=2$, and $\gamma=1-r$, we have $\alpha+2\beta>0$, which gives us $\mu=r/(r+2)$ and 
\[
\sigma^{2}=\frac{r}{r+2}-\frac{4(\frac{r}{r+2})+r(\frac{r}{r+2})^{2}}{r+4}=\frac{2r^{2}}{(r+2)^{2}(r+4)}>0;
\]
hence, the hypotheses of Lemma \ref{l-an2} are satisfied. Applying this lemma, we get our asymptotic estimates for $\mathbb{E}(X_{r,k}^{\pk})$ and $\mathbb{V}(X_{r,k}^{\pk})$. Furthermore, $\mathbb{E}(X_{r,k}^{\pk})$ satisfies the recurrence 
\[
\mathbb{E}(X_{r,k}^{\pk})=\left(1-\frac{2}{r(k-1)+1}\right)\mathbb{E}(X_{r,k-1}^{\pk})+\frac{r(k-1)}{r(k-1)+1}
\]
with initial condition $\mu_{1}=0$; applying the \texttt{rsolve} command in Maple to solve this recurrence yields our exact formula for $\mathbb{E}(X_{r,k}^{\pk})$.
\end{proof}
\begin{thm}
For all $r\geq2$, the distributions of $\des$, $\pk$, and $\lpk$ over $\P_{r,k}$---equivalently, those of $\lplat$, $\ascplat$, and $\lascplat$ over $\Q_{r,k}$---are asymptotically normal.
\end{thm}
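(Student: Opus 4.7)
The plan is to invoke one of the two asymptotic normality criteria already recorded in the section, namely Lemma~\ref{l-an1} (Bender's theorem) or Lemma~\ref{l-an2}(c), since essentially all the hypotheses have been verified elsewhere in the section. I would write the argument as three short verifications, one for each of the distributions in question.

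First, I would note that by Theorem~\ref{thm:equidistribution} together with the definitions~\eqref{eq:Cdes-def}--\eqref{eq:Clpk-def}, it suffices to prove asymptotic normality of the sequences of random variables $\{X_{r,k}^{\des}\}_{k\ge1}$, $\{X_{r,k}^{\pk}\}_{k\ge1}$, and $\{X_{r,k}^{\lpk}\}_{k\ge1}$ corresponding to the polynomials $C_{r,k}^{\des}(t)$, $C_{r,k}^{\pk}(t)$, and $C_{r,k}^{\lpk}(t)$. Fix $r\ge 2$. For each of these three polynomial sequences, I would first invoke the real-rootedness established in Section~\ref{sec:realroots} (the generalized Sturm sequence theorem). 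Then I would read off from Proposition~\ref{p-meanvar} that the variance of each $X_{r,k}^{\bullet}$ satisfies $\sigma_k^{\,2}\sim c_\bullet\, k$ with $c_\bullet>0$, so in particular $\sigma_k\to\infty$ as $k\to\infty$. Applying Lemma~\ref{l-an1} (Bender's criterion) to each of the three sequences then yields that $X_{r,k}^{\bullet}\sim\mathcal{N}(\mu_k,\sigma_k^{\,2})$, which is exactly the statement to be proved.

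Alternatively, and perhaps more cleanly, I would skip the separate appeal to real-rootedness and instead apply Lemma~\ref{l-an2}(c) directly. The recurrences in Corollary~\ref{c-uniC} are already of the exact form~\eqref{e-anrec} required by that lemma, up to the same index shift used in Section~\ref{sec:realroots}: for $C_{r,k}^{\des}$ one takes $\alpha(t)=t$, $\beta(t)=t$, $\gamma(t)=1-t$; for $C_{r,k}^{\pk}$ one takes $\alpha(t)=rt$, $\beta(t)=2t$, $\gamma(t)=1-rt$; and for $C_{r,k}^{\lpk}$ one takes $\alpha(t)=rt$, $\beta(t)=2t$, $\gamma(t)=1-rt+t$. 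In each case the conditions $\alpha+2\beta>0$ and $\sigma^{2}>0$ have already been checked in the proof of Proposition~\ref{p-meanvar} (for $\pk$ explicitly, and for $\des$ and $\lpk$ by the same one-line computation). Hence Lemma~\ref{l-an2}(c) gives $X_{r,k}^{\bullet}\sim\mathcal{N}(\mu k,\sigma^{2}k)$ with the explicit constants computed there.

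There is essentially no obstacle in this proof: the real work has been done in Sections~\ref{sec:realroots} and in Proposition~\ref{p-meanvar}, and the present theorem is just the combination of these ingredients via a standard criterion. The only minor point requiring care is bookkeeping---making sure the index shift between the recurrences of Corollary~\ref{c-uniC} and the form~\eqref{e-anrec} is handled consistently, and that the translation between the distributions over $\P_{r,k}$ and over $\Q_{r,k}$ (which is where $\ascplat$ and $\lascplat$ live) is made via Theorem~\ref{thm:equidistribution}. Once these translations are in place the conclusion follows immediately.
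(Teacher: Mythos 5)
Your proposal is correct and takes essentially the same route as the paper: the paper's proof likewise combines the real-rootedness established in Section~\ref{sec:realroots} with the variance estimates $\mathbb{V}(X_{r,k}^{\bullet})\sim c_\bullet k$ from Proposition~\ref{p-meanvar}, and then cites either Lemma~\ref{l-an1} or Lemma~\ref{l-an2}~(c) to conclude. The only small slip is in your optional second route: after the index shift the correct parameters are $\alpha(t)=rt$, $\beta(t)=t$, $\gamma(t)=(1-r)t$ for $C_{r,k}^{\des}$ and $\alpha(t)=rt$, $\beta(t)=2t$, $\gamma(t)=(1-r)t$ for $C_{r,k}^{\lpk}$ (not the values you list), but this bookkeeping error does not affect the validity of the argument, whose main route via Lemma~\ref{l-an1} is exactly the paper's.
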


\begin{proof}
We have already shown that the polynomials $C_{r,k}^{\des}(t)$, $C_{r,k}^{\pk}(t)$, and $C_{r,k}^{\lpk}(t)$ are real-rooted, and we have 
\begin{align*}
\sqrt{\mathbb{V}(X_{r,k}^{\des})} & \sim\frac{r\sqrt{k}}{(r+1)\sqrt{r+2}}\rightarrow\infty,\\
\sqrt{\mathbb{V}(X_{r,k}^{\pk})} & \sim\frac{r\sqrt{2k}}{(r+2)\sqrt{r+4}}\rightarrow\infty,\text{ and}\\
\sqrt{\mathbb{V}(X_{r,k}^{\lpk})} & \sim\frac{r\sqrt{2k}}{(r+2)\sqrt{r+4}}\rightarrow\infty
\end{align*}
as $k\rightarrow\infty$. Therefore, we may use either Lemma \ref{l-an1} or Lemma \ref{l-an2} (c) to get the desired result.
\end{proof}

\section{Applying the cluster method} \label{s-clusters}

In this section, we apply our results from Section~\ref{sec:roots} along with the generalized cluster method from~\cite{Zhuang2021} to derive formulas for counting permutations in $\S_n$ by occurrences of  $2134\cdots m$ or $12\cdots (m-2)m(m-1)$, jointly with each of the inverse statistics $\ides$, $\ipk$, and $\ilpk$. We begin with an expository overview of clusters, the cluster method, and the reverse-complementation symmetry on permutations.

\subsection{Clusters}\label{sec:clusters-def}

Given $\pi\in\mathfrak{S}_{n}$, $\sigma\in\mathfrak{S}_{m}$, and $S \subseteq [n-m+1]$, we say that $(\pi,S)$ is a \textit{marked permutation} on $\pi$ (with respect to $\sigma$) if $\std(\pi_{i}\pi_{i+2}\cdots\pi_{i+m})=\sigma$ for all $i\in S$. In other words, for each $i\in S$, the permutation $\pi$ has an occurrence of $\sigma$ starting at position $i$. Note that $\pi$ may have other occurrences of $\sigma$ not recorded by $S$. For example, if $\sigma=312$, then $(845132967,\{1,3\})$ is a marked word on $\pi=845132967$ with respect to $\pi$. Let us call the occurrences whose positions are recorded in $S$ \textit{marked occurrences}.

Now, let $\pi\in\mathfrak{S}_{m}$ and $\pi^{\prime}\in\mathfrak{S}_{n}$. We say that $\tau\in\mathfrak{S}_{m+n}$ is a \textit{concatenation} of $\pi$ and $\pi^{\prime}$ if $\std(\tau_{1}\tau_{2}\cdots\tau_{m})=\pi$ and $\std(\tau_{m+1}\tau_{m+2}\cdots\tau_{m+n})=\pi^{\prime}$. Similarly, if we have two marked permutations $(\pi,S)$ and $(\pi^{\prime},S^{\prime})$ with respect to the same pattern $\sigma$, then we say that $(\tau,T)$ is a \textit{concatenation} of $(\pi,S)$ and $(\pi^{\prime},S^{\prime})$ if $\tau$ is a concatenation of $\pi$ and $\pi^{\prime}$, and if $T=S\cup\{\,i+m:i\in S^{\prime}\,\}$. For example, $(618923574,\{2,5,6\})$ is a concatenation of $(2134,\{2\})$ and $(12453,\{1,2\})$, where $\sigma=123$. 

A marked permutation is called a \textit{cluster} if it is not the concatenation of two nonempty marked permutations. A cluster with respect to $\sigma$ is called a \textit{$\sigma$-cluster}. For example, $(618923574,\{2,5,6\})$ is not a $123$-cluster, but $(4351627,\{1,3,5\})$ is a $213$-cluster. 

It is easy to see that, in order for a marked permutation $(\pi,S)$ to be a $\sigma$-cluster, every letter of $\pi$ must belong to at least one marked occurrence in $\pi$, and each marked occurrence (other than the last one) must overlap with the next one. For this reason, it is useful to consider the \textit{overlap set} of a pattern $\sigma\in\mathfrak{S}_{m}$, defined by 
\begin{align*}
O_{\sigma} & \coloneqq \{\,i\in[m-1]:\std(\sigma_{i+1}\sigma_{i+2}\cdots\sigma_{m})=\std(\sigma_{1}\sigma_{2}\cdots\sigma_{m-i})\,\},
\end{align*}
which encodes the positions where two copies of $\sigma$ may overlap. If $O_{\sigma}=\{m-1\}$---that is, if two copies of $\sigma$ cannot overlap in more than one position---then $\sigma$ is called a \textit{non-overlapping pattern}. 

The advantage of working with non-overlapping patterns is that they greatly constrain how clusters can be formed. More precisely, if $\sigma\in\mathfrak{S}_{m}$ is non-overlapping and $(\pi,S)$ is a $\sigma$-cluster, then the length of $\pi$ must be equal to $(m-1)k+1$ where $k$ is the number of marked occurrences of $\sigma$ in $\pi$, and $S=\{\,(m-1)j+1:0\leq j\leq k-1\,\}$. As such, for any non-overlapping pattern $\sigma$, the length of a $\sigma$-cluster determines the positions of its marked occurrences, so we can simply represent the cluster by its underlying permutation. Because the pattern $2134\cdots m$ is non-overlapping, it is now apparent that the definition of $2134\cdots m$-cluster given in the introduction is compatible with the more general definition of cluster given here. The pattern $12\cdots (m-2)m(m-1)$ is non-overlapping as well.

\subsection{The cluster method and its variants}

We are now ready to discuss the Goulden\textendash Jackson cluster method, starting with the adaptation for permutations due to Elizalde and Noy~\cite{Elizalde2012}.

Fix a consecutive pattern $\sigma$. Recall that $\occ_{\sigma}(\pi)$ is the number of occurrences of $\sigma$ in the permutation $\pi$, and let $\mathcal{C}_{\sigma,\pi}$ be the set of all $\sigma$-clusters with underlying permutation $\pi$. If $c$ is a $\sigma$-cluster, let $\mk_{\sigma}(c)$ be the number of marked occurrences in $c$. Define
\begin{align*}
R_{\sigma}(s,x) & \coloneqq\sum_{n=0}^{\infty}\sum_{\pi\in\mathfrak{S}_{n}}\sum_{c\in \mathcal{C}_{\sigma,\pi}}s^{\mk_{\sigma}(c)}\frac{x^{n}}{n!}=\sum_{n=0}^{\infty}\sum_{k=0}^{\infty}r_{\sigma,n,k}s^{k}\frac{x^{n}}{n!}
\end{align*}
where $r_{\sigma,n,k}$ is the number of $\sigma$-clusters of length $n$ with $k$ marked occurrences.
\begin{thm}[Cluster method for permutations]
\label{t-gjcmperm}Let $\sigma$ be a pattern of length at least
2. Then
\[
\sum_{n=0}^{\infty}\sum_{\pi\in\mathfrak{S}_{n}}s^{\occ_{\sigma}(\pi)}\frac{x^{n}}{n!}=(1-x-R_{\sigma}(s-1,x))^{-1}.
\]
\end{thm}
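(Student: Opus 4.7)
\medskip

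The plan is to prove the identity by the standard inclusion--exclusion argument that underlies the Goulden--Jackson cluster method, then translate it into the exponential generating function framework. The key combinatorial step is a bijective decomposition of marked permutations into sequences of ``atoms''—either a single letter or a $\sigma$-cluster.

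First I would rewrite the left-hand side via inclusion--exclusion. Using $s^k = ((s-1)+1)^k = \sum_{j=0}^k \binom{k}{j}(s-1)^j$ applied to $k = \occ_\sigma(\pi)$, we get
\[
\sum_{n=0}^\infty \sum_{\pi\in\S_n} s^{\occ_\sigma(\pi)} \frac{x^n}{n!} = \sum_{n=0}^\infty \sum_{\pi\in\S_n} \sum_{S}(s-1)^{|S|}\,\frac{x^n}{n!},
\]
where the inner sum runs over all subsets $S$ of the set of positions at which $\sigma$ occurs in $\pi$. That is, the right-hand side is the generating function for all marked permutations $(\pi,S)$ with respect to $\sigma$, weighted by $(s-1)^{|S|}$.

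Next I would establish the decomposition. Given a marked permutation $(\pi,S)$ of length $n$, consider the union $U = \bigcup_{i\in S}\{i,i{+}1,\dots,i{+}m{-}1\}$ of positions covered by marked occurrences, where $m = |\sigma|$. Partition $[n]$ into the maximal runs of consecutive integers determined by $U$ together with the positions in $[n]\setminus U$. Each maximal run contained in $U$, together with its marks, is itself a $\sigma$-cluster (by definition of cluster, since it cannot be split into the concatenation of two nonempty marked permutations); and each position in $[n]\setminus U$ is an unmarked single letter. Reading these pieces left to right produces an ordered sequence of ``atoms,'' each of which is either a single letter or a $\sigma$-cluster. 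Conversely, any such sequence, once we fix a way to assign the label set $[n]$ to the pieces, reconstructs a unique marked permutation by standardization-reversal. This bijection is the main technical step, and the verification that clusters cannot be further split (so the decomposition is canonical) is where I would spend the most care.

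Finally I would convert the bijection into a generating function identity. In the EGF category, labeled sequences of two kinds of atoms---single letters, contributing $x$, and $\sigma$-clusters weighted by $(s-1)^{\mk_\sigma(c)}$, contributing $R_\sigma(s-1,x)$---have EGF
\[
\sum_{k\ge 0}\bigl(x + R_\sigma(s-1,x)\bigr)^k = \frac{1}{1 - x - R_\sigma(s-1,x)},
\]
since relabeling a concatenation of structures of sizes $a$ and $b$ by $[a+b]$ introduces the multinomial coefficients built into EGF multiplication. Combining this with the inclusion--exclusion rewriting yields exactly the stated formula. The expected obstacle is not the algebra but verifying that the cluster-by-cluster decomposition is well-defined: one must check that the maximal runs of $U$ are genuine clusters in the formal sense (not just connected overlapping blocks of marked occurrences), which follows from the non-splittability clause in the definition of cluster.
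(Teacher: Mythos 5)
A preliminary remark: the paper itself does not prove Theorem \ref{t-gjcmperm}; it quotes it from Elizalde and Noy \cite{Elizalde2012}. So the relevant comparison is with the standard Goulden--Jackson argument, which is exactly the strategy you outline: your inclusion--exclusion step (expanding $s^{\occ_\sigma(\pi)}$ as a sum of $(s-1)^{|S|}$ over marked permutations) and your final EGF sequence computation $\sum_{k\ge 0}(x+R_\sigma(s-1,x))^k=(1-x-R_\sigma(s-1,x))^{-1}$ are both correct.

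However, the decomposition step---the one you yourself identify as the crux---is wrong as stated. You partition $[n]$ into the maximal runs of consecutive positions of $U=\bigcup_{i\in S}\{i,\dots,i+m-1\}$ and assert that each such run, with its marks, is a $\sigma$-cluster ``since it cannot be split into the concatenation of two nonempty marked permutations.'' This fails whenever two marked occurrences abut without overlapping: for $\sigma=213$, the marked permutation $(213546,\{1,4\})$ has $U=\{1,\dots,6\}$, a single run, yet it is precisely the concatenation of $(213,\{1\})$ with $(213,\{1\})$ and hence is not a cluster. So your atoms are not clusters but runs of abutting clusters; their generating function is not $x+R_\sigma(s-1,x)$, and they also cannot be concatenated freely (two such runs placed adjacently would merge into one), so the free-sequence identity would not apply to them either. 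The correct decomposition cuts at every boundary between positions $j$ and $j+1$ that is not spanned by a single \emph{marked} occurrence; the resulting factors are exactly the unmarked single letters and the clusters, every covered position lies in a factor of length at least $2$ containing a mark, and the factorization is unique because the cut points are forced. It is this unique factorization into letters and clusters---not maximality of covered runs---that justifies $\bigl(1-x-R_\sigma(s-1,x)\bigr)^{-1}$. With that correction, your argument becomes the standard proof.
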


The above formula allows us to count permutations of each length by occurrences of $\sigma$ if we can count $\sigma$-clusters of each length by marked occurrences. In the special case that $\sigma$ is non-overlapping, the number of marked occurrences of $\sigma$ is determined by the length of the underlying permutation, so it suffices to count $\sigma$-clusters by length.

In~\cite{Zhuang2021}, Zhuang obtained a lifting of Theorem \ref{t-gjcmperm} to the Malvenuto\textendash Reutenauer algebra, from which Theorem \ref{t-gjcmperm} and its $q$-analogue (which first appeared in~\cite{Elizalde2016}) can be recovered as special cases, along with specializations that refine by $\ides$, $\ipk$, and $\ilpk$.

To state these specializations, we shall need the \textit{Hadamard product} $*$ on formal power series in $t$, which is defined by
\[
\Big(\sum_{n=0}^{\infty}a_{n}t^{n}\Big)*\Big(\sum_{n=0}^{\infty}b_{n}t^{n}\Big)\coloneqq\sum_{n=0}^{\infty}a_{n}b_{n}t^{n}.
\]
If $f$ is a formal power series in $t$, we denote by 
$$f^{*\left\langle n\right\rangle }\coloneqq\underset{n\text{ times}}{\underbrace{f*\cdots*f}}$$
the $n$-fold Hadamard product of $f$ in the variable $t$. Note that $f$ may have other variables as well, 
but we apply the Hadamard product only for the variable $t$. For example, we have
\[
\left(3xt+\frac{t^{2}}{1-xy}\right)^{*\left\langle n\right\rangle }=(3x)^{n}t+\frac{t^{2}}{(1-xy)^{n}}.
\]

We now state the specialization of the generalized cluster method which refines by the inverse descent number. Recall that 
\begin{align*}
A_{\sigma,n}^{\ides}(s,t) & =\sum_{\pi\in\mathfrak{S}_{n}}s^{\occ_{\sigma}(\pi)}t^{\ides(\pi)+1},
\end{align*}
and let
\[
R_{\sigma,j}^{\ides}(s,t)\coloneqq\sum_{\substack{\pi\in\mathfrak{S}_{j}}
}t^{\ides(\pi)+1}\sum_{c\in \mathcal{C}_{\sigma,\pi}}s^{\mk_{\sigma}(c)}.
\]

\begin{thm}[Cluster method for $\ides$]
\label{t-gjcmides}Let $\sigma$ be a pattern of length at least 2. Then 
\begin{align*}
\sum_{n=0}^{\infty}\frac{A_{\sigma,n}^{\ides}(s,t)}{(1-t)^{n+1}}x^{n} & =\sum_{n=0}^{\infty}\left(\frac{tx}{(1-t)^{2}}+\frac{1}{1-t}\sum_{j=2}^{\infty}R_{\sigma,j}^{\ides}(s-1,t)z^{j}\right)^{*\left\langle n\right\rangle }
\end{align*}
where $z=x/(1-t)$.
\end{thm}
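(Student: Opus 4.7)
The plan is to derive this identity by specializing the lifting of the cluster method to the Malvenuto--Reutenauer algebra established in \cite{Zhuang2021}. That noncommutative identity has the shape ``(weighted sum over permutations) $=$ (geometric series inverse of a cluster-weighted expression)'', and lives in the MR algebra with permutations as its fundamental basis. The theorem should follow by applying an appropriate algebra homomorphism $\Phi_{\ides}$ to both sides.

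The homomorphism I would use sends each basis element $\pi \in \mathfrak{S}_n$ to $t^{\ides(\pi)+1}\, x^n / (1-t)^{n+1}$. This choice is motivated by the classical $P$-partition / principal-specialization identity: $t^{\ides(\pi)+1}/(1-t)^{n+1}$ is exactly the generating function (in $t$) for integer sequences of length $n$ compatible with the descent composition of $\pi^{-1}$. Under $\Phi_{\ides}$, the left-hand side of Zhuang's MR-algebra identity becomes $\sum_{n \geq 0} A_{\sigma,n}^{\ides}(s,t)\, x^n/(1-t)^{n+1}$, and the cluster contributions on the right-hand side collect into the $R_{\sigma,j}^{\ides}(s-1,t)$ terms, with an extra factor $1/(1-t)^{j}$ attached to each cluster of length $j$, which accounts for the substitution $z = x/(1-t)$.

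The main obstacle, and the technical heart of the proof, is verifying that $\Phi_{\ides}$ converts the noncommutative MR product into the Hadamard product in the variable $t$: for $\pi \in \mathfrak{S}_m$ and $\tau \in \mathfrak{S}_n$, one needs
$$\Phi_{\ides}(\pi \cdot \tau) = \Phi_{\ides}(\pi) * \Phi_{\ides}(\tau).$$
Since the MR product expands as a sum over shuffles of $\pi$ and $\tau$, this amounts to showing, at each height $k$, that the count of compatible joint sequences for $\pi \cdot \tau$ at height $k$ equals the product of the individual counts for $\pi$ and $\tau$ at that height; this follows because compatible sequences for a shuffle are precisely pairs of compatible sequences for the two factors. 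Once this multiplicativity is in hand, the geometric series inversion in the MR algebra translates directly into the Hadamard geometric series encoded on the right-hand side by $(\cdot)^{*\langle n\rangle}$, and the stated identity drops out.
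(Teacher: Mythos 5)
This paper does not actually prove Theorem \ref{t-gjcmides}; it is quoted from \cite{Zhuang2021}, so there is no internal proof to compare against, and your sketch is essentially a reconstruction of Zhuang's original argument: lift to the Malvenuto--Reutenauer algebra, then specialize via $\pi\mapsto t^{\ides(\pi)+1}x^{n}/(1-t)^{n+1}$, whose compatibility with the product is exactly the $P$-partition/quasisymmetric-function specialization that turns products into Hadamard products in $t$ (the single-letter term giving $tx/(1-t)^{2}$, and a cluster of length $j$ giving $z^{j}/(1-t)$ with $z=x/(1-t)$, with $s\mapsto s-1$ coming from the usual inclusion--exclusion over marked occurrences). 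One caveat in your justification of multiplicativity: the MR product relevant to the cluster method is the convolution (concatenation-by-standardization) product, not the shifted shuffle; it is the \emph{inverses} of the concatenations of $\pi$ and $\tau$ that form the shifted shuffle of $\pi^{-1}$ with the shifted $\tau^{-1}$, and this is precisely why the statistic that behaves well under the specialization is $\ides$ rather than $\des$. With that adjustment your compatible-sequences argument (Stanley's shuffle lemma applied to the inverses, at each fixed bound $k$) is the correct key step, and the plan goes through as in \cite{Zhuang2021}.
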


For the inverse peak statistics $\ipk$ and $\ilpk$, recall that
\begin{align*}
P_{\sigma,n}^{\ipk}(s,t)= & \sum_{\pi\in\mathfrak{S}_{n}}s^{\occ_{\sigma}(\pi)}t^{\ipk(\pi)+1}\quad\text{and}\quad P_{\sigma,n}^{\ilpk}(s,t)=\sum_{\pi\in\mathfrak{S}_{n}}s^{\occ_{\sigma}(\pi)}t^{\ilpk(\pi)},
\end{align*}
and define
\[
R_{\sigma,j}^{\ipk}(s,t)\coloneqq\sum_{\substack{\pi\in\mathfrak{S}_{j}}
}t^{\ipk(\pi)+1}\sum_{c\in \mathcal{C}_{\sigma,\pi}}s^{\mk_{\sigma}(c)}\quad\text{and}\quad R_{\sigma,j}^{\ilpk}(s,t)\coloneqq\sum_{\substack{\pi\in\mathfrak{S}_{j}}
}t^{\ilpk(\pi)}\sum_{c\in \mathcal{C}_{\sigma,\pi}}s^{\mk_{\sigma}(c)}.
\]

\begin{thm}[Cluster method for $\ipk$]
\label{t-gjcmipk}Let $\sigma$ be a permutation of length at least 2. Then
\begin{align*}
\frac{1}{1-t}+\frac{1+t}{2(1-t)}\sum_{n=1}^{\infty}P_{\sigma,n}^{\ipk}(s,u)z^{n} & =\sum_{n=0}^{\infty}\left(\frac{2tx}{(1-t)^{2}}+\frac{1+t}{2(1-t)}\sum_{j=2}^{\infty}R_{\sigma,j}^{\ipk}(s-1,u)z^{j}\right)^{*\left\langle n\right\rangle }
\end{align*}
where $u=4t/(1+t)^{2}$ and $z=(1+t)x/(1-t)$.
\end{thm}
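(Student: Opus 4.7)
The plan is to derive this formula as a direct specialization of the generalized cluster method established by Zhuang in \cite{Zhuang2021}, in the same way Theorems \ref{t-gjcmperm} and \ref{t-gjcmides} are obtained. The framework is the Malvenuto--Reutenauer algebra $\mathfrak{P}$, which carries an internal Hadamard-type product $\star$; the main identity in \cite{Zhuang2021} expresses the permutation generating function (weighted by $s^{\occ_\sigma}$) in $\mathfrak{P}$ as a sum of iterated $\star$-powers of a linear term plus the cluster series $R_\sigma(s-1,x)$. Applying suitable ring homomorphisms out of $\mathfrak{P}$ then recovers the classical cluster method and its $\inv$, $\ides$, $\ipk$, and $\ilpk$ specializations.

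For the present theorem, the first step is to recall from \cite{Zhuang2021} the homomorphism $\phi_{\ipk}\colon \mathfrak{P}\to\mathbb{Q}[\![t,x]\!]$ that tracks the inverse peak statistic. Concretely, $\phi_{\ipk}$ is defined via the Stembridge peak algebra and encodes $t^{\ipk(\pi)+1}$ on a permutation $\pi$ after the substitution $u=4t/(1+t)^2$; this substitution is the standard one that makes peak-type quasisymmetric identities behave well. The key compatibility, also proved in \cite{Zhuang2021}, is that $\phi_{\ipk}$ intertwines the internal product $\star$ on $\mathfrak{P}$ with the ordinary Hadamard product $*$ in the variable $z=(1+t)x/(1-t)$, up to the normalizing prefactor $(1+t)/(2(1-t))$.

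The second step is to apply $\phi_{\ipk}$ term by term to the generalized cluster identity. The image of the permutation generating function becomes $\frac{1}{1-t}+\frac{1+t}{2(1-t)}\sum_{n\ge1}P_{\sigma,n}^{\ipk}(s,u)z^n$, where the $1/(1-t)$ accounts for the empty permutation and the normalizing factor $(1+t)/(2(1-t))$ appears because of the identification of $\star$ with $*$. The image of the linear term produces the summand $2tx/(1-t)^2$, and the image of the cluster series becomes $\frac{1+t}{2(1-t)}\sum_{j\ge2}R_{\sigma,j}^{\ipk}(s-1,u)z^j$ by the definition of $R_{\sigma,j}^{\ipk}$ together with the substitution $u=4t/(1+t)^2$. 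Collecting these ingredients and using that $\phi_{\ipk}$ sends $\star$-powers to $*$-powers in $z$ yields exactly the right-hand side displayed in the theorem.

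The step I expect to require the most care is the verification that $\phi_{\ipk}$ really does intertwine $\star$ with $*$ under the stated substitution, and that the prefactor $(1+t)/(2(1-t))$ is the correct normalization — this is the combinatorial heart of the argument and is where the Stembridge substitution $u=4t/(1+t)^2$ enters. Once that compatibility is in hand, the remainder of the derivation is a routine bookkeeping exercise that mirrors the derivation of Theorem \ref{t-gjcmides}.
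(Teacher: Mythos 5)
The paper offers no proof of Theorem \ref{t-gjcmipk}: it is imported wholesale from \cite{Zhuang2021}, and your outline correctly identifies its provenance there as the $\ipk$-specialization of the lifted cluster method in the Malvenuto--Reutenauer algebra, obtained by applying the peak-statistic homomorphism built from Stembridge's peak algebra with the substitution $u=4t/(1+t)^{2}$, which turns the algebra's product into Hadamard products and produces the prefactor $\frac{1+t}{2(1-t)}$. Two caveats: the Hadamard products in the resulting formula are taken in the variable $t$ (not in $z=(1+t)x/(1-t)$, as you state), and your argument defers the essential intertwining/normalization check back to \cite{Zhuang2021} rather than verifying it---but since the present paper likewise cites that reference instead of reproving the theorem, your treatment is on the same footing as the paper's.
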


\begin{thm}[Cluster method for $\ilpk$]
\label{t-gjcmilpk}Let $\sigma$ be a permutation of length at least 2. Then
\[
\frac{1}{1-t}\sum_{n=0}^{\infty}P_{\sigma,n}^{\ilpk}(s,u)z^{n}=\sum_{n=0}^{\infty}\left(\frac{z}{1-t}+\frac{1}{1-t}\sum_{j=2}^{\infty}R_{\sigma,j}^{\ilpk}(s-1,u)z^{j}\right)^{*\left\langle n\right\rangle }
\]
where $u=4t/(1+t)^{2}$ and $z=(1+t)x/(1-t)$.
\end{thm}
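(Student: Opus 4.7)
The plan is to obtain Theorem \ref{t-gjcmilpk} as a specialization of the generalized cluster method in the Malvenuto--Reutenauer algebra, following the same template used in \cite{Zhuang2021} to establish Theorems \ref{t-gjcmides} and \ref{t-gjcmipk}. The starting point would be the master identity of the generalized cluster method, which lives in the completion of the Malvenuto--Reutenauer algebra $\mathrm{FQSym}$ and encodes, in a single equation, the relationship between the generating function for all permutations weighted by $\occ_\sigma$ and the generating function for $\sigma$-clusters. That master identity is a formal generalization of Theorem \ref{t-gjcmperm}, where each permutation is treated as its own formal symbol with shuffle product, and all the specializations by statistics are obtained by applying appropriate algebra homomorphisms.

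The key step is then to construct a homomorphism whose effect on a permutation $\pi$ records its $\ilpk$ value. Concretely, I would pass through the quotient map $\mathrm{FQSym} \twoheadrightarrow \mathrm{QSym}$ sending $\pi$ to the fundamental quasisymmetric function $F_{\mathrm{Des}(\pi)}$ indexed by the descent set (equivalently, by the descent composition of $\pi$), and then apply the principal specialization / peak homomorphism that produces, for an arbitrary permutation $\pi \in \S_n$, the generating function identity
\[
\sum_{\pi \in \S_n} t^{\ilpk(\pi)} \cdot \frac{x^n}{(1-t)} \;\longleftrightarrow\; \text{the left-peak transformation with } u=\tfrac{4t}{(1+t)^2},\ z=\tfrac{(1+t)x}{1-t}.
\]
This is the classical identity (essentially due to Stembridge, in the context of the peak subalgebra) that converts the descent distribution on $\S_n$ into the left-peak distribution, and it is precisely the change of variables appearing in the statement of the theorem. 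Because $\ilpk(\pi)=\lpk(\pi^{-1})$, and because the quasisymmetric image of $\pi$ records the descent set of $\pi^{-1}$ (up to the appropriate convention), this homomorphism reads off $\ilpk$ on the left-hand side of the master identity.

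Once the homomorphism is in place, the proof reduces to applying it termwise to both sides of the master identity. On the left, the sum over all permutations becomes $\sum_n P_{\sigma,n}^{\ilpk}(s,u) z^n / (1-t)$ after collecting by $\occ_\sigma$. On the right, the shuffle-power structure of the generalized cluster method is transported to an iterated Hadamard product in the variable $t$: this is because the homomorphism sends shuffle to a product that, in the $t$-variable, is precisely $*$. Each factor in the shuffle power contributes either the ``free'' term $z/(1-t)$ (for a gap between clusters in the cluster decomposition of a permutation) or a single-cluster term weighted by the number of marked occurrences and by $t^{\ilpk(\cdot)}$, which is exactly $(1-t)^{-1}R^{\ilpk}_{\sigma,j}(s-1,u)z^j$.

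The main obstacle, and the step requiring the most care, is verifying that the principal-specialization/peak-homomorphism behaves correctly with respect to the shuffle product---that is, that it genuinely intertwines shuffle in $\mathrm{FQSym}$ with the Hadamard product $*$ in $t$ after the substitutions $u = 4t/(1+t)^2$ and $z=(1+t)x/(1-t)$. This compatibility is the technical heart of \cite{Zhuang2021}; once it is granted, the identity in the theorem follows by matching terms. A secondary subtlety is the book-keeping of the ``empty'' and ``length-one'' contributions (which is why the statement uses $\sum_{j=2}^\infty$ and why a separate $z/(1-t)$ term appears), but this is handled cleanly because a $\sigma$-cluster requires length at least $|\sigma|\ge 2$.
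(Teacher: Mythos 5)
This theorem is not proved in the paper at all: it is imported verbatim from \cite{Zhuang2021} as one of the specializations of the generalized cluster method, so there is no in-paper argument to compare against. Your sketch follows exactly the route of that reference---lift the cluster identity to the Malvenuto--Reutenauer algebra, pass to QSym via inverse descent compositions, and apply the left-peak (Petersen-style, rather than Stembridge's interior-peak) specialization, under which the product becomes the Hadamard product in $t$ after the substitutions $u=4t/(1+t)^2$, $z=(1+t)x/(1-t)$---so it is the same approach as the cited source rather than a new one. Note, however, that you explicitly defer the one genuinely technical step (that the $\ilpk$ homomorphism intertwines the algebra product with the Hadamard product) to \cite{Zhuang2021}, so what you have is an accurate proof outline, not an independent proof; also, the term $z/(1-t)$ arises as the image of a single unmarked letter in the decomposition of a marked permutation into letters and clusters, not as a ``gap between clusters,'' though this does not affect the argument.
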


See \cite[Sections 3.5 and 4.3]{Zhuang2021} for discussion of some practical considerations involved in computing the polynomials $A_{\sigma,n}^{\ides}(s,t)$, $P_{\sigma,n}^{\ipk}(s,t)$, and $P_{\sigma,n}^{\ilpk}(s,t)$ from the Hadamard product formulas.

\subsection{Reverse-complementation}

Prior to applying the cluster method, let us briefly discuss how the reverse-complementation symmetry on permutations affects some of the statistics that we consider. Given $\pi\in\mathfrak{S}_{n}$, its \textit{reverse-complement} $\pi^{rc}$ is defined by 
\[
\pi^{rc}=(n+1-\pi_{n})(n+1-\pi_{n-1})\cdots(n+1-\pi_{1}).
\]
We shall use the following proposition to translate our results about the pattern $2134\cdots m$ to results about its reverse-complement $12\cdots (m-2)m(m-1)$. Other symmetries yield some analogous results for the patterns $m\cdots 4312$ and $(m-1)m(m-2)\cdots 21$, but we do not consider them here.

\begin{prop}
\label{p-rc} Let $\pi\in\mathfrak{S}_{n}$ with $n\geq1$. Then
\begin{enumerate}
\item [\normalfont{(a)}] $\ides(\pi^{rc})=\ides(\pi)$,
\item [\normalfont{(b)}] $\lpk(\pi^{rc})=\lpk(\pi)$,
\item [\normalfont{(c)}] $\ilpk(\pi^{rc})=\ilpk(\pi)$, and
\item [\normalfont{(d)}] if $n\geq2$ and $\pi_{n-1}<\pi_{n}$, then $\pk(\pi^{rc})=\lpk(\pi)$,
\end{enumerate}
\end{prop}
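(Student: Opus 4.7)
The plan is to reduce all four parts to a single observation about the ascent-descent pattern of a permutation. For $\pi\in\mathfrak{S}_n$, write $w(\pi)=w_1w_2\cdots w_{n-1}$ with $w_i=A$ if $\pi_i<\pi_{i+1}$ and $w_i=D$ if $\pi_i>\pi_{i+1}$. A direct computation from $\pi^{rc}_i=n+1-\pi_{n+1-i}$ shows that position $i$ of $\pi^{rc}$ is an ascent if and only if position $n-i$ of $\pi$ is an ascent, so $w(\pi^{rc})$ is simply the reversal of $w(\pi)$. A second direct computation from the same formula yields $(\pi^{rc})^{-1}=(\pi^{-1})^{rc}$, which I will use to convert statements about $\pk$ and $\lpk$ into statements about their inverse counterparts.

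Parts (a) and (c) then follow quickly. Since $\des(\sigma)$ counts the $D$'s in $w(\sigma)$, and reversing a word preserves the number of $D$'s, we have $\des(\sigma^{rc})=\des(\sigma)$ for every $\sigma\in\mathfrak{S}_n$. Hence
\[
\ides(\pi^{rc})=\des\bigl((\pi^{rc})^{-1}\bigr)=\des\bigl((\pi^{-1})^{rc}\bigr)=\des(\pi^{-1})=\ides(\pi),
\]
proving (a). Once (b) is established, the identical chain with $\des$ replaced by $\lpk$ will give (c).

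The main work is in (b) and (d). The key formulas, immediate from the definitions, are that $\pk(\sigma)$ equals the number of $AD$ factors in $w(\sigma)$, while $\lpk(\pi)$ equals the number of maximal $D$-runs in $w(\pi)$: position $i$ is a left peak precisely when $w_i=D$ and either $i=1$ or $w_{i-1}=A$, which is exactly the condition that a maximal $D$-run begins at position $i$ of $w(\pi)$. Since the number of maximal $D$-runs in a word is invariant under reversing the word, (b) is immediate from $w(\pi^{rc})$ being the reverse of $w(\pi)$. For (d), this same reversal shows that $\pk(\pi^{rc})$ equals the number of $DA$ factors in $w(\pi)$, while each maximal $D$-run in $w(\pi)$ is either followed by an $A$ (contributing one $DA$ factor) or else terminates at position $n-1$. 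Thus $\lpk(\pi)$ equals the number of $DA$ factors of $w(\pi)$, plus $1$ if $w_{n-1}=D$ and plus $0$ otherwise. Under the hypothesis $\pi_{n-1}<\pi_n$ we have $w_{n-1}=A$, so the correction vanishes and $\lpk(\pi)=\pk(\pi^{rc})$.

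The only real subtlety is the boundary bookkeeping in (d)---handling the possible $D$-run that ends at position $n-1$---and this is exactly what the hypothesis of part (d) is designed to neutralize.
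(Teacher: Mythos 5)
Your proposal is correct. It takes a mildly different route from the paper's: where you encode everything in the ascent--descent word $w(\pi)$ and use the fact that $w(\pi^{rc})$ is the reversal of $w(\pi)$, identifying $\lpk$ with the number of maximal $D$-runs and $\pk$ with the number of $AD$-factors, the paper instead introduces an auxiliary statistic (``right valleys''), shows that left peaks of $\pi^{rc}$ correspond under $i\mapsto n+1-i$ to right valleys of $\pi$, and then proves $\operatorname{rval}(\pi)=\lpk(\pi)$ by an alternation argument (left peaks and right valleys interleave, starting with a left peak and ending with a right valley). These are two packagings of the same half-turn symmetry: a maximal $D$-run begins at a left peak and ends just before a right valley, so your run-count simultaneously plays the role of both statistics and lets reversal-invariance replace the alternation argument --- arguably a cleaner reduction. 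For (d) the paper deduces the claim from (b) by noting that $\pi_{n-1}<\pi_n$ forces $\pi^{rc}$ to begin with an ascent, hence $\pk(\pi^{rc})=\lpk(\pi^{rc})$, whereas you do the equivalent boundary bookkeeping directly on $w(\pi)$ (the possible $D$-run ending at position $n-1$); both handle the same edge case. For (a) the paper simply cites an earlier result of Zhuang, while you give the short self-contained argument via $\des$-invariance under reverse-complement and $(\pi^{rc})^{-1}=(\pi^{-1})^{rc}$; your proof of (c) matches the paper's exactly.
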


\begin{proof}
Recall that the plot of $\pi\in\mathfrak{S}_n$ is the set of points in the plane with coordinates $(i,\pi_i)$ for $1\le i\le n$.
Throughout this proof, we are guided implicitly by the idea that the plot of $\pi^{rc}$ is a half-turn rotation of the plot of $\pi$, and that the plot of $\pi^{-1}$ is a diagonal reflection of the plot of $\pi$. Part (a) is found in \cite[Proposition 2.6 (d)]{Zhuang2021}, so we focus our attention on parts (b)--(d).

Call $i\in \{2,3,\ldots,n\}$ a \textit{right valley} of $\pi\in\mathfrak{S}_{n}$ if $\pi_{i-1}>\pi_{i}<\pi_{i+1}$, or if $i=n$ and $n-1$ is a descent of $\pi$. Let $\operatorname{rval}(\pi)$ be the number of right valleys of $\pi$. We have that $i$ is a left peak of $\pi^{rc}$ if and only if $n+1-i$ is a right valley of $\pi$, so $\lpk(\pi^{rc}) = \operatorname{rval}(\pi)$. We will now show that $\operatorname{rval}(\pi) = \lpk(\pi)$. Every two consecutive left peaks of $\pi$ have exactly one right valley between them, and every two consecutive right valleys of $\pi$ have exactly one left peak between them. Thus, the sequence of positions $\{i: \text{$i$ is a left peak or a right valley of $\pi$}\}$ alternates between left peaks and right valleys, starting with a left peak and ending with a right valley; therefore there is the same number of each. This proves part (b).

Part (c) follows from part (b) using the fact that $(\pi^{rc})^{-1} = (\pi^{-1})^{rc}$. Part (d) follows from part (b) using the fact that $\pi_{n-1} < \pi_n$ implies $\pk(\pi^{rc}) = \lpk(\pi^{rc})$.
\end{proof}

\subsection{Pattern enumeration results}\label{sec:patterns}

We now apply Theorems \ref{t-gjcmides}\textendash \ref{t-gjcmilpk} to $\sigma=2134\cdots m$ to produce the main results of this section.
\begin{thm}
\label{t-231mides} For all $m\geq3$, we have
\begin{align*}
\sum_{n=0}^{\infty}\frac{A_{2134\cdots m,n}^{\ides}(s,t)}{(1-t)^{n+1}}x^{n} & =\sum_{n=0}^{\infty}\left(\frac{tx}{(1-t)^{2}}+\frac{1}{1-t}\sum_{k=1}^{\infty}tC_{m-1,k}^{\des}(t)(s-1)^{k}z^{(m-1)k+1}\right)^{*\left\langle n\right\rangle }
\end{align*}
where $z=x/(1-t)$.
\end{thm}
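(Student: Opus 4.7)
The plan is to apply the cluster method for $\ides$ (Theorem \ref{t-gjcmides}) directly to $\sigma = 2134\cdots m$, so the only real task is to evaluate the cluster generating function $R_{\sigma,j}^{\ides}(s,t)$ in closed form and recognize it in terms of the polynomials $C_{m-1,k}^{\des}(t)$ that have already been set up in Section~\ref{sec:exact}.

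First, I would exploit the fact that $\sigma=2134\cdots m$ is non-overlapping, as noted in Section~\ref{sec:clusters-def}. This means that any $\sigma$-cluster with $k$ marked occurrences has underlying permutation of length exactly $(m-1)k+1$, and the set of marked positions is forced to be $\{(m-1)j+1 : 0 \leq j \leq k-1\}$. Consequently $R_{\sigma,j}^{\ides}(s,t)=0$ unless $j=(m-1)k+1$ for some $k\geq 1$, and for such $j$ the inner sum $\sum_{c\in\mathcal C_{\sigma,\pi}} s^{\mk_\sigma(c)}$ is either $s^k$ (if $\pi$ is a $2134\cdots m$-cluster in the sense of the introduction) or $0$ otherwise.

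Next, I would translate the sum over $\sigma$-clusters into a sum over $\mathcal P_{m-1,k}$ by taking inverses. By definition,
\[
\mathcal P_{m-1,k}=\{\pi\in\mathfrak{S}_{(m-1)k+1}:\pi^{-1}\text{ is a }2134\cdots m\text{-cluster}\},
\]
so the map $\pi\mapsto\pi^{-1}$ is a bijection between $\mathcal P_{m-1,k}$ and the set of $2134\cdots m$-clusters of length $(m-1)k+1$. Using $\ides(\pi^{-1})=\des((\pi^{-1})^{-1})=\des(\pi)$ and the definition of $C_{m-1,k}^{\des}(t)$ in \eqref{eq:Cdes-def}, we obtain
\[
R_{\sigma,(m-1)k+1}^{\ides}(s,t)=s^{k}\sum_{\pi\in\mathcal P_{m-1,k}}t^{\des(\pi)+1}=s^{k}\,t\,C_{m-1,k}^{\des}(t).
\]

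Finally, I would substitute this expression (with $s$ replaced by $s-1$, and noting that each relevant $j=(m-1)k+1\geq m\geq 3$ satisfies $j\geq 2$) into the formula of Theorem \ref{t-gjcmides}. The inner sum $\sum_{j\geq 2}R_{\sigma,j}^{\ides}(s-1,t)z^{j}$ collapses to $\sum_{k\geq 1}t\,C_{m-1,k}^{\des}(t)(s-1)^{k}z^{(m-1)k+1}$, producing exactly the claimed identity. There is no real obstacle here beyond carefully matching definitions and keeping track of the inversion that swaps $\ides$ with $\des$; the content of the theorem is the recognition that the cluster generating function for $\ides$ is encoded by the polynomial $C_{m-1,k}^{\des}(t)$ studied in Section~\ref{sec:exact}.
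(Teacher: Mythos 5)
Your proposal is correct and follows essentially the same route as the paper: it uses the non-overlapping property of $2134\cdots m$ to force clusters of $k$ marked occurrences to have length $(m-1)k+1$, identifies $R_{2134\cdots m,(m-1)k+1}^{\ides}(s,t)=s^{k}tC_{m-1,k}^{\des}(t)$ via the inversion bijection with $\P_{m-1,k}$, and substitutes into Theorem \ref{t-gjcmides}. The only difference is that you spell out the inversion step and the vanishing of $R_{\sigma,j}^{\ides}$ for other $j$ more explicitly than the paper does.
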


\begin{proof}
The polynomial $C_{m-1,k}^{\des}(t)$ counts inverse $2134\cdots m$-clusters of length $(m-1)k+1$ by $\des$, or equivalently, $2134\cdots m$-clusters of length $(m-1)k+1$ by $\ides$. Each $2134\cdots m$-cluster must have length $(m-1)k+1$ for some $k\geq1$, and in this case there are exactly $k$ marked occurrences of $2134\cdots m$ in that cluster, so we have
\[
\sum_{j=2}^{\infty}R_{2134\cdots m,j}^{\ides}(s,t)z^{j}=\sum_{k=1}^{\infty}tC_{m-1,k}^{\des}(t)s^{k}z^{(m-1)k+1}.
\]
The result then follows from Theorem \ref{t-gjcmides}.
\end{proof}

\begin{table}[H]
\begin{centering}
\begin{tabular}{|c|c|}
\hline 
$n$ & $A_{213,n}^{\ides}(t)$\tabularnewline
\hline 
$0$ & $1$\tabularnewline
\hline 
$1$ & $t$\tabularnewline
\hline 
$2$ & $t+t^{2}$\tabularnewline
\hline 
$3$ & $t+3t^{2}+t^{3}$\tabularnewline
\hline 
$4$ & $t+7t^{2}+7t^{3}+t^{4}$\tabularnewline
\hline 
$5$ & $t+15t^{2}+32t^{3}+14t^{4}+t^{5}$\tabularnewline
\hline 
$6$ & $t+30t^{2}+123t^{3}+115t^{4}+26t^{5}+t^{6}$\tabularnewline
\hline 
$7$ & $t+57t^{2}+419t^{3}+738t^{4}+361t^{5}+46t^{6}+t^{7}$\tabularnewline
\hline 
$8$ & $t+105t^{2}+1307t^{3}+3983t^{4}+3663t^{5}+1037t^{6}+79t^{7}+t^{8}$\tabularnewline
\hline 
$9$ & $t+190t^{2}+3836t^{3}+18959t^{4}+29824t^{5}+16041t^{6}+2808t^{7}+133t^{8}+t^{9}$\tabularnewline
\hline 
\end{tabular}
\par\end{centering}
\caption{Distribution of $\protect\ides$ over $\mathfrak{S}_{n}(213)$}
\label{tab:ides213}
\end{table}

\begin{table}[H]
\begin{centering}
\begin{tabular}{|c|c|}
\hline 
$n$ & $A_{2134,n}^{\ides}(t)$\tabularnewline
\hline 
$0$ & $1$\tabularnewline
\hline 
$1$ & $t$\tabularnewline
\hline 
$2$ & $t+t^{2}$\tabularnewline
\hline 
$3$ & $t+4t^{2}+t^{3}$\tabularnewline
\hline 
$4$ & $t+10t^{2}+11t^{3}+t^{4}$\tabularnewline
\hline 
$5$ & $t+22t^{2}+60t^{3}+26t^{4}+t^{5}$\tabularnewline
\hline 
$6$ & $t+45t^{2}+251t^{3}+275t^{4}+57t^{5}+t^{6}$\tabularnewline
\hline 
$7$ & $t+89t^{2}+910t^{3}+2000t^{4}+1083t^{5}+120t^{6}+t^{7}$\tabularnewline
\hline 
$8$ & $t+172t^{2}+3034t^{3}+11830t^{4}+12880t^{5}+3889t^{6}+247t^{7}+t^{8}$\tabularnewline
\hline 
$9$ & $t+328t^{2}+9580t^{3}+61504t^{4}+117535t^{5}+72304t^{6}+13159t^{7}+502t^{8}+t^{9}$\tabularnewline
\hline 
\end{tabular}
\par\end{centering}
\caption{Distribution of $\protect\ides$ over $\mathfrak{S}_{n}(2134)$}
\label{tab:ides2134}
\end{table}

We can use Theorem \ref{t-231mides}, together with our recurrence for the polynomials $C_{r,k}^{\des}(t)$ given in Corollary \ref{c-uniC} (a), to compute the polynomials $A_{2134\cdots m,n}^{\ides}(s,t)$. Additionally, using Proposition \ref{p-rc} (a) and the fact that occurrences of a pattern $\sigma$ in $\pi$ directly correspond to occurrences of $\sigma^{rc}$ in $\pi^{rc}$, it follows that the polynomials $A_{2134\cdots m,n}^{\ides}(s,t)$ and $A_{12\cdots (m-2)m(m-1),n}^{\ides}(s,t)$ coincide.

Now, define $A_{\sigma,n}^{\ides}(t)\coloneqq A_{\sigma,n}^{\ides}(0,t)$, and also $P_{\sigma,n}^{\ipk}(t)$ and $P_{\sigma,n}^{\ilpk}(t)$ in the analogous way. These polynomials encode the distributions of $\ides$, $\ipk$, and $\ilpk$ over $\mathfrak{S}_{n}(\sigma)$, the subset of permutations in $\mathfrak{S}_{n}$ avoiding $\sigma$. We display the first few polynomials $A_{213,n}^{\ides}(t)$ and $A_{2134,n}^{\ides}(t)$ in Tables~\ref{tab:ides213} and~\ref{tab:ides2134}.

\begin{thm}
\label{t-231mipk} For all $m\geq3$, we have \leqnomode
\begin{multline*}
\tag{a} \qquad \frac{1}{1-t}+\frac{1+t}{2(1-t)}\sum_{n=1}^{\infty}P_{2134\cdots m,n}^{\ipk}(s,u)z^{n}\\
=\sum_{n=0}^{\infty}\left(\frac{2tx}{(1-t)^{2}}+\frac{1+t}{2(1-t)}\sum_{k=2}^{\infty}uC_{m-1,k}^{\pk}(u)(s-1)^{k}z^{(m-1)k+1}\right)^{*\left\langle n\right\rangle }\qquad
\end{multline*}
and 
\begin{multline*}
\tag{b} \qquad\frac{1}{1-t}+\frac{1+t}{2(1-t)}\sum_{n=1}^{\infty}P_{12\cdots (m-2)m(m-1),n}^{\ipk}(s,u)z^{n}\\
=\sum_{n=0}^{\infty}\left(\frac{2tx}{(1-t)^{2}}+\frac{1+t}{2(1-t)}\sum_{k=2}^{\infty}uC_{m-1,k}^{\lpk}(u)(s-1)^{k}z^{(m-1)k+1}\right)^{*\left\langle n\right\rangle }\qquad
\end{multline*}
where $u=4t/(1+t)^{2}$ and $z=(1+t)x/(1-t)$.
\end{thm}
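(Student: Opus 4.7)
The plan is to apply Theorem~\ref{t-gjcmipk} (the cluster method specialized to $\ipk$) to $\sigma=2134\cdots m$ for part~(a) and to $\sigma=12\cdots(m-2)m(m-1)$ for part~(b); in each case the task reduces to computing the cluster generating function $\sum_{j\ge 2}R^{\ipk}_{\sigma,j}(s-1,u)\,z^{j}$.

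For~(a), the pattern $2134\cdots m$ is non-overlapping, so every $2134\cdots m$-cluster has length $(m-1)k+1$ for some $k\ge 1$ and carries exactly $k$ marked occurrences. Since $\ipk(c)=\pk(c^{-1})$ and, by definition, $c^{-1}\in\P_{m-1,k}$, summing $u^{\ipk(c)}$ over the $2134\cdots m$-clusters of length $(m-1)k+1$ yields exactly $C_{m-1,k}^{\pk}(u)$. Thus
\[
R^{\ipk}_{2134\cdots m,\,(m-1)k+1}(s,u)=u\,s^{k}\,C_{m-1,k}^{\pk}(u),
\]
and substituting $s\mapsto s-1$ and plugging into Theorem~\ref{t-gjcmipk} gives~(a). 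This is structurally identical to the proof of Theorem~\ref{t-231mides}, only with the $\pk$-enumerator of $\P_{m-1,k}$ in place of its $\des$-enumerator.

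For~(b), I would leverage the reverse-complementation symmetry: since $12\cdots(m-2)m(m-1)=(2134\cdots m)^{rc}$, the map $c\mapsto c^{rc}$ is a length- and marked-occurrence-preserving bijection from $12\cdots(m-2)m(m-1)$-clusters to $2134\cdots m$-clusters. Given such a cluster $c$ of length $n=(m-1)k+1$, set $\pi\coloneqq(c^{rc})^{-1}\in\P_{m-1,k}$. Using the identity $(c^{rc})^{-1}=(c^{-1})^{rc}$ (which appears in the proof of Proposition~\ref{p-rc}(c)), we get $c^{-1}=\pi^{rc}$, so $\ipk(c)=\pk(\pi^{rc})$. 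Proposition~\ref{p-rc}(d) then gives $\pk(\pi^{rc})=\lpk(\pi)$ provided $\pi_{n-1}<\pi_n$.

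The main obstacle is verifying this last hypothesis. It follows from Proposition~\ref{prop:characterizeclusters}(i): the letters of $\pi$ that are not congruent to $2$ modulo $m-1$ appear in increasing order, and since the largest such letter is $n=(m-1)k+1\equiv 1\pmod{m-1}$, it must occupy the last position of $\pi$. Thus $\pi_n=n>\pi_{n-1}$, so $\ipk(c)=\lpk(\pi)$. Summing over clusters yields
\[
R^{\ipk}_{12\cdots(m-2)m(m-1),\,(m-1)k+1}(s,u)=u\,s^{k}\,C_{m-1,k}^{\lpk}(u),
\]
and the formula in~(b) follows from Theorem~\ref{t-gjcmipk}.
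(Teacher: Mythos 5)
Your argument is correct and is essentially the paper's own proof: both parts follow from Theorem \ref{t-gjcmipk} once the cluster generating functions are identified as $\sum_{k\ge 1}u\,C_{m-1,k}^{\pk}(u)\,s^{k}z^{(m-1)k+1}$ and, via reverse-complementation together with Proposition \ref{p-rc}(d) and the fact that every $\pi\in\P_{m-1,k}$ ends with an ascent, as $\sum_{k\ge 1}u\,C_{m-1,k}^{\lpk}(u)\,s^{k}z^{(m-1)k+1}$ (the paper merely asserts this last fact as an observation, whereas you supply a justification). The only nitpick is that your deduction that the letter $n$ occupies the last position of $\pi$ does not follow from condition (i) of Proposition \ref{prop:characterizeclusters} alone---you also need condition (ii) to rule out a letter congruent to $2$ modulo $m-1$ sitting in the final slot, since such a letter would have to precede its companion---but this is a one-line fix.
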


\begin{proof}
These follow directly from Theorem \ref{t-gjcmipk} and the equations
\[
\sum_{j=2}^{\infty}R_{2134\cdots m,j}^{\ipk}(s,u)z^{j}=\sum_{k=1}^{\infty}uC_{m-1,k}^{\pk}(u)s^{k}z^{(m-1)k+1}
\]
and 
\[
\sum_{j=2}^{\infty}R_{12\cdots (m-2)m(m-1),n}^{\ipk}(s,u)z^{j}=\sum_{k=1}^{\infty}uC_{m-1,k}^{\lpk}(u)s^{k}z^{(m-1)k+1}.
\]
The latter equation is a consequence of Proposition \ref{p-rc} (d) and the observation that every inverse $2134\cdots m$-cluster $\pi$ of length $n$ satisfies $\pi_{n-1} < \pi_n$.
\end{proof}

Setting $s=0$ in the previous theorem yields results for the polynomials $P_{2134\cdots m,n}^{\ipk}(t)$ and $P_{12\cdots (m-2)m(m-1),n}^{\ipk}(t)$ for counting $2134\cdots m$- and $12\cdots (m-2)m(m-1)$-avoiding permutations by $\ipk$. The first few of these polynomials for $m=3$ and $m=4$ are displayed in Tables~\ref{tab:ipk213}--\ref{tab:ipk1243}.

\begin{table}[H]
\begin{centering}
\begin{tabular}{|c|c|c|c|}
\hline 
$n$ & $P_{213,n}^{\ipk}(t)$ & $n$ & $P_{213,n}^{\ipk}(t)$\tabularnewline
\hline 
$0$ & $1$ & $5$ & $5t+48t^{2}+10t^{3}$\tabularnewline
\hline 
$1$ & $t$ & $6$ & $6t+164t^{2}+126t^{3}$\tabularnewline
\hline 
$2$ & $2t$ & $7$ & $7t+522t^{2}+992t^{3}+102t^{4}$\tabularnewline
\hline 
$3$ & $3t+2t^{2}$ & $8$ & $8t+1608t^{2}+6320t^{3}+2240t^{4}$\tabularnewline
\hline 
$4$ & $4t+12t^{2}$ & $9$ & $9t+4880t^{2}+35860t^{3}+29250t^{4}+1794t^{5}$\tabularnewline
\hline 
\end{tabular}
\par\end{centering}
\caption{Distribution of $\protect\ipk$ over $\mathfrak{S}_{n}(213)$}
\label{tab:ipk213}
\end{table}

\begin{table}[H]
\begin{centering}
\begin{tabular}{|c|c|c|c|}
\hline 
$n$ & $P_{2134,n}^{\ipk}(t)$ & $n$ & $P_{2134,n}^{\ipk}(t)$\tabularnewline
\hline 
$0$ & $1$ & $5$ & $12t+82t^{2}+16t^{3}$\tabularnewline
\hline 
$1$ & $t$ & $6$ & $20t+356t^{2}+254t^{3}$\tabularnewline
\hline 
$2$ & $2t$ & $7$ & $33t+1427t^{2}+2496t^{3}+248t^{4}$\tabularnewline
\hline 
$3$ & $4t+2t^{2}$ & $8$ & $54t+5500t^{2}+19756t^{3}+6744t^{4}$\tabularnewline
\hline 
$4$ & $7t+16t^{2}$ & $9$ & $88t+20780t^{2}+138774t^{3}+108752t^{4}+6520t^{5}$\tabularnewline
\hline 
\end{tabular}
\par\end{centering}
\caption{Distribution of $\protect\ipk$ over $\mathfrak{S}_{n}(2134)$}
\label{tab:ipk2134}
\end{table}

\begin{table}[H]
\begin{centering}
\begin{tabular}{|c|c|c|c|}
\hline 
$n$ & $P_{132,n}^{\ipk}(t)$ & $n$ & $P_{132,n}^{\ipk}(t)$\tabularnewline
\hline 
$0$ & $1$ & $5$ & $16t+42t^{2}+5t^{3}$\tabularnewline
\hline 
$1$ & $t$ & $6$ & $32t+184t^{2}+80t^{3}$\tabularnewline
\hline 
$2$ & $2t$ & $7$ & $64t+732t^{2}+770t^{3}+57t^{4}$\tabularnewline
\hline 
$3$ & $4t+t^{2}$ & $8$ & $128t+2752t^{2}+5816t^{3}+1480t^{4}$\tabularnewline
\hline 
$4$ & $8t+8t^{2}$ & $9$ & $256t+9992t^{2}+38212t^{3}+22232t^{4}+1101t^{5}$\tabularnewline
\hline 
\end{tabular}
\par\end{centering}
\caption{Distribution of $\protect\ipk$ over $\mathfrak{S}_{n}(132)$}
\label{tab:ipk132}
\end{table}

\begin{table}[H]
\begin{centering}
\begin{tabular}{|c|c|c|c|}
\hline 
$n$ & $P_{1243,n}^{\ipk}(t)$ & $n$ & $P_{1243,n}^{\ipk}(t)$\tabularnewline
\hline 
$0$ & $1$ & $5$ & $16t+80t^{2}+14t^{3}$\tabularnewline
\hline 
$1$ & $t$ & $6$ & $32t+368t^{2}+230t^{3}$\tabularnewline
\hline 
$2$ & $2t$ & $7$ & $64t+1570t^{2}+2354t^{3}+216t^{4}$\tabularnewline
\hline 
$3$ & $4t+2t^{2}$ & $8$ & $128t+6424t^{2}+19420t^{3}+6082t^{4}$\tabularnewline
\hline 
$4$ & $8t+15t^{2}$ & $9$ & $256t+25664t^{2}+141840t^{3}+101408t^{4}+5746t^{5}$\tabularnewline
\hline 
\end{tabular}
\par\end{centering}
\caption{Distribution of $\protect\ipk$ over $\mathfrak{S}_{n}(1243)$}
\label{tab:ipk1243}
\end{table}

\begin{thm}
\label{t-231milpk} For all $m\geq3$, we have
\begin{align*}
\frac{1}{1-t}\sum_{n=1}^{\infty}P_{2134\cdots m,n}^{\ilpk}(s,u)z^{n} &= \sum_{n=0}^{\infty}\left(\frac{z}{1-t}+\frac{1}{1-t}\sum_{k=1}^{\infty}C_{m-1,k}^{\lpk}(u)(s-1)^{k}z^{(m-1)k+1}\right)^{*\left\langle n\right\rangle }
\end{align*}
where $u=4t/(1+t)^{2}$ and $z=(1+t)x/(1-t)$.
\end{thm}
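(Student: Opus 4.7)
The plan is to apply the cluster method for $\ilpk$ (Theorem~\ref{t-gjcmilpk}) to $\sigma = 2134\cdots m$, following exactly the template used in the proofs of Theorems~\ref{t-231mides} and~\ref{t-231mipk}. All that needs to be established is a closed form for the cluster generating function $\sum_{j\geq 2} R_{2134\cdots m,j}^{\ilpk}(s,u)\,z^{j}$ in terms of the polynomials $C_{m-1,k}^{\lpk}(u)$; the desired identity then follows by direct substitution into Theorem~\ref{t-gjcmilpk}.

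The first step is to exploit the non-overlapping property of $2134\cdots m$ recalled in Section~\ref{sec:clusters-def}. Since two copies of this pattern can overlap in at most one position, every $2134\cdots m$-cluster has underlying permutation of length $(m-1)k+1$ for some $k\geq 1$, contains exactly $k$ marked occurrences (so the weight $s^{\mk_\sigma(c)}$ contributes $s^k$), and is uniquely recovered from its underlying permutation. In particular, $R_{2134\cdots m,j}^{\ilpk}(s,u)$ vanishes unless $j=(m-1)k+1$ for some $k\ge 1$.

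The second step is to translate the surviving terms into statistics on $\P_{m-1,k}$. Using the identity $\ilpk(\pi)=\lpk(\pi^{-1})$ together with the bijection $\pi\mapsto\pi^{-1}$ between $2134\cdots m$-clusters of length $(m-1)k+1$ and $\P_{m-1,k}$, I would compute
\[
\sum_{j\geq 2} R_{2134\cdots m,j}^{\ilpk}(s,u)\,z^{j} \;=\; \sum_{k\geq 1} s^{k}\,z^{(m-1)k+1}\!\!\sum_{\pi\in\P_{m-1,k}}\!\! u^{\lpk(\pi)} \;=\; \sum_{k\geq 1} C_{m-1,k}^{\lpk}(u)\,s^{k}\,z^{(m-1)k+1},
\]
where the last equality is just the definition~\eqref{eq:Clpk-def} of $C_{m-1,k}^{\lpk}$. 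Plugging this expression into Theorem~\ref{t-gjcmilpk} with $s$ replaced by $s-1$ gives the stated formula.

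There is no serious obstacle here—the argument is a routine application of machinery already in place. The only mildly delicate point, and the reason this is worth spelling out separately from Theorem~\ref{t-231mipk}, is that no extra factor of $u$ appears in front of $C_{m-1,k}^{\lpk}(u)$: this reflects the definition of $R_{\sigma,j}^{\ilpk}$, which tracks the weight $u^{\ilpk(\pi)}$ rather than $u^{\ipk(\pi)+1}$ as in the $\ipk$ version. Correspondingly, the substituted identity does not need the reverse-complementation argument invoked in Theorem~\ref{t-231mipk}(b) via Proposition~\ref{p-rc}(d), since $\ilpk$ is directly compatible with the cluster statistic $\lpk$ on $\P_{m-1,k}$ through Proposition~\ref{p-rc}(c) without any boundary correction.
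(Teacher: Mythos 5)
Your proposal is correct and takes essentially the same approach as the paper: the non-overlapping property of $2134\cdots m$ reduces the cluster generating function to $\sum_{k\geq1}C_{m-1,k}^{\lpk}(u)\,s^{k}z^{(m-1)k+1}$, which is then substituted (with $s\mapsto s-1$) into the cluster method for $\ilpk$. One small aside: Proposition \ref{p-rc}(c) is not what makes $\ilpk$ compatible with $\lpk$ on inverse clusters---that is just the definition $\ilpk(\pi)=\lpk(\pi^{-1})$---rather, part (c) is only used afterwards to transfer the result to the reverse-complement pattern $12\cdots(m-2)m(m-1)$.
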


\begin{proof}
The result follows directly from Theorem \ref{t-gjcmipk} and the equation
\[
\sum_{j=2}^{\infty}R_{2134\cdots m,j}^{\ilpk}(s,u)z^{j}=\sum_{k=1}^{\infty}C_{m-1,k}^{\ilpk}(u)s^{k}z^{(m-1)k+1}.\tag*{\qedhere}
\]
\end{proof}
In light of Proposition \ref{p-rc} (c), we have $P_{12\cdots (m-2)m(m-1),n}^{\ilpk}(s,t)=P_{2134\cdots m,n}^{\ilpk}(s,t)$. The first few polynomials $P_{213,n}^{\ilpk}(t)$ and $P_{2134,n}^{\ilpk}(t)$ are displayed in Tables~\ref{tab:ilpk213}\textendash\ref{tab:ilpk2134}.

\begin{table}[H]
\begin{centering}
\begin{tabular}{|c|c|c|c|}
\hline 
$n$ & $P_{213,n}^{\ilpk}(t)$ & $n$ & $P_{213,n}^{\ilpk}(t)$\tabularnewline
\hline 
$0$ & $1$ & $5$ & $1+33t+29t^{2}$\tabularnewline
\hline 
$1$ & $1$ & $6$ & $1+87t+187t^{2}+21t^{3}$\tabularnewline
\hline 
$2$ & $1+t$ & $7$ & $1+224t+1006t^{2}+392t^{3}$\tabularnewline
\hline 
$3$ & $1+4t$ & $8$ & $1+570t+4880t^{2}+4430t^{3}+295t^{4}$\tabularnewline
\hline 
$4$ & $1+12t+3t^{2}$ & $9$ & $1+1443t+22214t^{2}+39318t^{3}+8817t^{4}$\tabularnewline
\hline 
\end{tabular}
\par\end{centering}
\caption{Distribution of $\protect\ilpk$ over $\mathfrak{S}_{n}(213)$}
\label{tab:ilpk213}
\end{table}

\begin{table}[H]
\begin{centering}
\begin{tabular}{|c|c|c|c|}
\hline 
$n$ & $P_{2134,n}^{\ilpk}(t)$ & $n$ & $P_{2134,n}^{\ilpk}(t)$\tabularnewline
\hline 
$0$ & $1$ & $5$ & $1+52t+57t^{2}$\tabularnewline
\hline 
$1$ & $1$ & $6$ & $1+152t+422t^{2}+55t^{3}$\tabularnewline
\hline 
$2$ & $1+t$ & $7$ & $1+437t+2589t^{2}+1177t^{3}$\tabularnewline
\hline 
$3$ & $1+5t$ & $8$ & $1+1247t+14394t^{2}+15285t^{3}+1127t^{4}$\tabularnewline
\hline 
$4$ & $1+17t+5t^{2}$ & $9$ & $1+3548t+75540t^{2}+156926t^{3}+38899t^{4}$\tabularnewline
\hline 
\end{tabular}
\par\end{centering}
\caption{Distribution of $\protect\ilpk$ over $\mathfrak{S}_{n}(2134)$}
\label{tab:ilpk2134}
\end{table}

\section{Future directions of research}\label{sec:future}

\subsection{The consecutive pattern 1324} \label{sec:1324}

In \cite{Zhuang2021}, the enumeration of permutations by occurrences of a consecutive pattern $\sigma$ jointly with each of the statistics $\ides$, $\ipk$, and $\ilpk$ was done 
for the monotone patterns $\sigma=12\cdots m$ and $\sigma=m\cdots21$, as well as transpositional patterns of the form $12\cdots(a-1)(a+1)a(a+2)(a+3)\cdots m$ where $m\geq5$ and $2\leq a\leq m-2$. In this paper, we have continued this work by considering the transpositional patterns $\sigma=2134\cdots m$ and $\sigma=12\cdots (m-2)m(m-1)$. The only transpositional pattern that has yet to be studied within this line of research is $1324$, so we state the following as an open problem.
\begin{problem}
Count $1324$-clusters by $\ides$, $\ipk$, and $\ilpk$.
\end{problem}

As explained in Section~\ref{sec:clusters-def}, the patterns $2134\cdots m$ and $12\cdots (m-2)m(m-1)$ have the special property of being non-overlapping, which constrains how clusters can be formed. The patterns $12\cdots(a-1)(a+1)a(a+2)(a+3)\cdots m$ (for $m\geq5$ and $2\leq a\leq m-2$) do not have this property, but as mentioned in the introduction, they are examples of chain patterns. Because $1324$ is neither a non-overlapping pattern nor a chain pattern, we suspect that counting $1324$-clusters by $\ides$, $\ipk$, and $\ilpk$ will be more difficult than for the other transpositional patterns.

\subsection{Conjectures} \label{sec:conjectures}

We conclude by posing several conjectures concerning the polynomials that we have studied in this paper.

Our first three conjectures give formulas for the linear coefficients of the polynomials $C_{r,k}^{\pk}(t)$, the leading coefficients of $C_{r,k}^{\lpk}(t)$, and the coefficients of $t^{k-1}$ in $C_{3,k}^{\lpk}(t)$. We have empirically verified these three conjectures for all $r,k \leq 100$.

\begin{conjecture} \label{cj-linCpk}
For all $r\geq2$ and $k\geq1$, the number of permutations in $\P_{r,k}$ with exactly one peak---equivalently, the number of permutations in $\Q_{r,k}$ with exactly one ascent-plateau---is equal to $(3^k-2k-1)r/4$.
\end{conjecture}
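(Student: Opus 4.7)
The plan is to prove the conjecture by extracting a linear recurrence for the coefficient $[t^1]C_{r,k}^{\pk}(t)$ from the recurrence for $C_{r,k}^{\pk}(t)$ given in Corollary~\ref{c-uniC}(b), and then solving it in closed form. Write $c_{r,k,j} \coloneqq [t^j]C_{r,k}^{\pk}(t)$, so that the conjectured formula amounts to $c_{r,k,1} = r(3^k - 2k - 1)/4$.

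The first step is to extract coefficients from
\[
C_{r,k+1}^{\pk}(t) = (1+rkt)\,C_{r,k}^{\pk}(t) + 2t(1-t)\,\frac{d}{dt}C_{r,k}^{\pk}(t).
\]
Matching coefficients of $t^1$ on both sides, the contribution from $(1+rkt)C_{r,k}^{\pk}(t)$ is $c_{r,k,1} + rk\,c_{r,k,0}$, the contribution from $2t\,\frac{d}{dt}C_{r,k}^{\pk}(t)$ is $2c_{r,k,1}$, and $-2t^2\,\frac{d}{dt}C_{r,k}^{\pk}(t)$ contributes nothing at degree $1$. As recalled in the proof of Theorem~\ref{t-Crec}, the only permutation in $\P_{r,k}$ with no peaks is the one beginning with the $k$ letters congruent to $2$ modulo $r$ in decreasing order followed by all remaining letters in increasing order, so $c_{r,k,0} = 1$ for every $k$. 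Combining these observations gives the first-order linear recurrence
\[
c_{r,k+1,1} = 3\,c_{r,k,1} + rk,\qquad c_{r,1,1} = 0,
\]
since $C_{r,1}^{\pk}(t) = 1$.

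The second step is to solve this recurrence. The homogeneous solution is $A\cdot 3^k$, and seeking a particular solution of the form $\beta k + \gamma$ one quickly finds $\beta = -r/2$ and $\gamma = -r/4$. Imposing $c_{r,1,1} = 0$ then forces $A = r/4$, yielding
\[
c_{r,k,1} = \frac{r}{4}\left(3^k - 2k - 1\right),
\]
which is the desired formula. A brief check against the base case $k=1$ and against the tables for small $r,k$ confirms the answer.

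I do not expect any real obstacle: the whole argument is a one-variable coefficient extraction followed by solving a first-order linear recurrence with polynomial forcing, both entirely routine once one has the recurrence from Corollary~\ref{c-uniC}(b) in hand. The only subtlety worth emphasizing in the write-up is the appeal to $c_{r,k,0} = 1$, which is where the combinatorics of $\P_{r,k}$ enters the otherwise purely algebraic calculation.
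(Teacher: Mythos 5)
Your argument is correct, and it is worth pointing out that the paper itself contains no proof of this statement: it is posed as Conjecture~\ref{cj-linCpk} and supported only by empirical verification for $r,k\leq 100$. Your coefficient extraction from Corollary~\ref{c-uniC}(b) is sound: writing $c_{r,k,j}=[t^j]C_{r,k}^{\pk}(t)$, the terms $(1+rkt)C_{r,k}^{\pk}(t)$, $2t\frac{d}{dt}C_{r,k}^{\pk}(t)$, and $-2t^2\frac{d}{dt}C_{r,k}^{\pk}(t)$ contribute $c_{r,k,1}+rk\,c_{r,k,0}$, $2c_{r,k,1}$, and $0$ respectively at degree one, giving $c_{r,k+1,1}=3c_{r,k,1}+rk\,c_{r,k,0}$. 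The input $c_{r,k,0}=1$ is legitimately available from the paper (the proof of Theorem~\ref{t-Crec} shows $p_{r,k}(i,0)=\delta_{i,k}$, so the unique peak-free permutation gives constant term $1$); in fact you do not even need that combinatorial fact, since setting $t=0$ in the same recurrence gives $C_{r,k+1}^{\pk}(0)=C_{r,k}^{\pk}(0)$ and $C_{r,1}^{\pk}(0)=1$. With the base case $c_{r,1,1}=0$, the recurrence $c_{r,k+1,1}=3c_{r,k,1}+rk$ solves to $c_{r,k,1}=\tfrac{r}{4}\left(3^{k}-2k-1\right)$, and one checks directly that this closed form satisfies the recurrence and agrees with Tables~\ref{tab:Cpk2} and~\ref{tab:Cpk3}. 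So your short, purely algebraic route (relying only on the paper's Corollary~\ref{c-uniC}(b)) settles what the authors left open; what it does not deliver is the kind of combinatorial or bijective explanation the authors ask for in Section~\ref{sec:conjectures} for the divisibility-type relations between different values of $r$, which would be the natural follow-up.
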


The numbers $(3^k-2k-1)r/4$ for $r=3$ and $r=4$ are given by OEIS \cite{oeis} sequences A290764 and A061981, respectively. Notably, the above conjecture implies that $r+1$ times the number of permutations in $\P_{r,k}$ with exactly one peak equals $r$ times the number of permutations in $\P_{r+1,k}$ with exactly one peak, and also that $r$ times the number of permutations in $\P_{2,k}$ with exactly one peak equals twice the number of permutations in $\P_{r,k}$ with exactly one peak. It would be interesting to prove these relations combinatorially.

\begin{conjecture} \label{cj-kClpk}
For all $r\geq2$ and $k\geq1$, the number of permutations in $\P_{r,k}$ with exactly $k$ left peaks---equivalently, the number of permutations in $\Q_{r,k}$ with exactly $k$ left ascent-plateaus---is equal to $\prod_{j=1}^{k-1}((r-2)j+1)$.
\end{conjecture}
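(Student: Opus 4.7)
The plan is to prove the conjecture by induction on $k$, reading off the leading coefficient of $C_{r,k}^{\lpk}(t)$ directly from the recurrence in Corollary \ref{c-uniC}(c). Let $c_{r,k} \coloneqq [t^k] C_{r,k}^{\lpk}(t)$, which by Equation~\eqref{eq:Clpk-def} equals both the number of $\pi \in \P_{r,k}$ with exactly $k$ left peaks and the number of $\rho \in \Q_{r,k}$ with exactly $k$ left ascent-plateaus. I aim to show $c_{r,k+1} = ((r-2)k+1)\, c_{r,k}$; combined with the base case $c_{r,1}=1$ (from $C_{r,1}^{\lpk}(t)=t$), this gives the claimed product formula.

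First I will establish the degree bound $\deg C_{r,k}^{\lpk}(t) \le k$, which is the only step requiring any combinatorial input. Using the $r$-Stirling interpretation, it suffices to show that each value $v \in [k]$ contributes at most one left ascent-plateau to any $\rho \in \Q_{r,k}$. The key observation is that a position $i$ with $\rho_i = \rho_{i+1} = v$ is a left ascent-plateau only if $i = 1$ or $\rho_{i-1} < v$. The $r$ copies of $v$ partition into contiguous blocks, and by the defining property of $r$-Stirling permutations, any block of $v$'s after the first is immediately preceded by a value strictly greater than $v$ (since smaller values are forbidden between copies of $v$); such a later block therefore begins with a descent, not an ascent. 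Hence only the leftmost block of $v$'s can give rise to a left ascent-plateau for $v$, and the bound follows.

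Once the degree bound is in hand, I extract $[t^{k+1}]$ from both sides of
\[
C_{r,k+1}^{\lpk}(t) = (1+rk)\,t\, C_{r,k}^{\lpk}(t) + 2t(1-t) \frac{d}{dt} C_{r,k}^{\lpk}(t).
\]
The first summand contributes $(1+rk)\,c_{r,k}$. Writing the second summand as $2t\frac{d}{dt}C_{r,k}^{\lpk}(t) - 2t^2 \frac{d}{dt}C_{r,k}^{\lpk}(t)$, the first piece vanishes in degree $k+1$ since $\deg C_{r,k}^{\lpk}(t) \le k$ forces $[t^k] \frac{d}{dt}C_{r,k}^{\lpk}(t)$ to be zero, while the second piece contributes $-2k\,c_{r,k}$ from differentiating the leading term. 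Summing yields $c_{r,k+1} = (1+rk-2k)\,c_{r,k} = ((r-2)k+1)\,c_{r,k}$, and induction closes the proof. The only step with any content is the degree bound; everything else is formal coefficient extraction, and I would expect the analogous strategy applied to Corollary \ref{c-uniC}(b) to settle Conjecture \ref{cj-linCpk} as well.
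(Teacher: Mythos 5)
Your proposal is correct, and the most important thing to say is that the paper does \emph{not} prove this statement: Conjecture~\ref{cj-kClpk} is left open there, supported only by empirical verification for $r,k\leq 100$, so there is no proof of the authors' to compare against. What you have observed is that the conjecture is in fact a formal consequence of the paper's own recurrence, Corollary~\ref{c-uniC}(c) (itself a specialization of Theorem~\ref{t-Clrec}), which the paper establishes. I checked both ingredients. The coefficient extraction is right: writing $a_j\coloneqq[t^j]C_{r,k}^{\lpk}(t)$, the recurrence gives $[t^{k+1}]C_{r,k+1}^{\lpk}(t)=(1+rk)a_k+2(k+1)a_{k+1}-2k\,a_k$, so once $a_{k+1}=0$ this is $((r-2)k+1)a_k$, and with $C_{r,1}^{\lpk}(t)=t$ the product formula follows (it agrees with Tables~\ref{tab:Clpk2} and~\ref{tab:Clpk3}). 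The degree bound $\deg C_{r,k}^{\lpk}(t)\leq k$ is also fine: your combinatorial argument via $\Q_{r,k}$ works, with the one small expository gap that after ruling out all but the leftmost block of $v$'s you should also note that only the \emph{first} position of that block can be a left ascent-plateau (interior positions have $\rho_{i-1}=v$, not $\rho_{i-1}<v$), which is what yields ``at most one left ascent-plateau per value'' and hence $\lascplat(\rho)\leq k$; alternatively, you can dispense with combinatorics entirely, since the recurrence raises the degree by at most one at each step, so $\deg C_{r,k}^{\lpk}(t)\leq k$ follows by induction from $C_{r,1}^{\lpk}(t)=t$. Your closing remark is also accurate: the same extraction applied to Corollary~\ref{c-uniC}(b), together with the fact (shown in the proof of Theorem~\ref{t-Crec}) that the constant term of $C_{r,k}^{\pk}(t)$ equals $1$, gives the recurrence $L_{k+1}=3L_k+rk$ for the linear coefficients and hence Conjecture~\ref{cj-linCpk}; and extracting the coefficient of $t^{k}$ (one below the top) from Corollary~\ref{c-uniC}(c) with $r=3$ gives $d_{k+1}=(k+3)d_k+2k\cdot k!$, which is solved by $k!\binom{k}{2}$, settling Conjecture~\ref{cj-k-1Clpk} as well. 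In short: correct, and it resolves an open conjecture of the paper using only results the paper already proves.
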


Recall that $\left|\P_{r,k}\right|=\left|\Q_{r,k}\right|=\prod_{j=1}^{k-1}(rj+1)$, so an ideal proof of  Conjecture \ref{cj-kClpk} would establish a bijection between $\P_{r-2,k}$ and the set of permutations in $\P_{r,k}$ with exactly $k$ left peaks (or between $\Q_{r-2,k}$ and permutations in $\Q_{r,k}$ with exactly $k$ left ascent-plateaus).

\begin{conjecture} \label{cj-k-1Clpk}
For all $k\geq2$, the number of permutations in $\P_{3,k}$ with exactly $k-1$ left peaks---equivalently, the number of permutations in $\Q_{3,k}$ with exactly $k-1$ left ascent-plateaus---is equal to $k! \binom{k}{2}$.
\end{conjecture}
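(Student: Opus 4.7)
The plan is to prove the conjecture by induction on $k$ using the univariate recurrence of Corollary~\ref{c-uniC}(c) specialized to $r=3$. Writing $c_{k,j} \coloneqq [t^j]\, C_{3,k}^{\lpk}(t)$, the identity
$$C_{3,k+1}^{\lpk}(t) = (1+3k)t\, C_{3,k}^{\lpk}(t) + 2t(1-t)\, \frac{d}{dt} C_{3,k}^{\lpk}(t)$$
yields, after extracting the coefficient of $t^j$, the scalar recurrence
$$c_{k+1,j} = 2j\, c_{k,j} + (3k - 2j + 3)\, c_{k,j-1},$$
with the initial value $C_{3,1}^{\lpk}(t) = t$ giving $c_{1,1} = 1$. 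The target is $c_{k,k-1}$.

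A first auxiliary step is to establish $c_{k,k} = k!$, which is Conjecture~\ref{cj-kClpk} in the special case $r=3$. Setting $j=k+1$ in the recurrence and using $c_{k,k+1}=0$ by degree gives $c_{k+1,k+1} = (k+1)\, c_{k,k}$, and induction on $k$ from $c_{1,1}=1$ yields the formula. This subresult would be isolated as its own lemma, since it also settles one open case of the preceding conjecture.

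With $c_{k,k} = k!$ in hand, the main induction is routine. The base case $k=2$ holds because $C_{3,2}^{\lpk}(t) = 2t + 2t^2$, so $c_{2,1} = 2 = 2!\binom{2}{2}$. For the inductive step, assuming $c_{k,k-1} = k!\binom{k}{2}$ and setting $j=k$ in the recurrence gives
$$c_{k+1,k} = 2k\, c_{k,k} + (k+3)\, c_{k,k-1} = 2k\cdot k! + (k+3)\, k!\binom{k}{2},$$
and a one-line algebraic simplification (using $4 + (k-1)(k+3) = (k+1)^2$) produces $c_{k+1,k} = k! \cdot k(k+1)^2/2 = (k+1)!\binom{k+1}{2}$, closing the induction.

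I do not foresee any significant obstacle for this particular conjecture: the entire argument reduces to two applications of the coefficient recurrence plus one elementary identity. What would be harder, and a natural follow-up, is a combinatorial proof giving a bijection between permutations in $\mathcal{P}_{3,k}$ with exactly $k-1$ left peaks (or $3$-Stirling permutations of order $k$ with exactly $k-1$ left ascent-plateaus) and some simple set of cardinality $k!\binom{k}{2}$; this would also illuminate why the formula takes this particular product form and why $r=3$ plays a distinguished role.
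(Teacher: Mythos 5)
Your proposal is correct, and it does more than the paper does: this statement is left as a conjecture there, supported only by empirical verification, whereas your argument proves it using nothing beyond the paper's own Corollary \ref{c-uniC}(c). I checked the key steps. Extracting $[t^j]$ from $C_{3,k+1}^{\lpk}(t)=(1+3k)t\,C_{3,k}^{\lpk}(t)+2t(1-t)\frac{d}{dt}C_{3,k}^{\lpk}(t)$ does give $c_{k+1,j}=2j\,c_{k,j}+(3k-2j+3)\,c_{k,j-1}$; the case $j=k+1$ gives $c_{k+1,k+1}=(k+1)c_{k,k}$ and hence $c_{k,k}=k!$; and the case $j=k$, together with $4+(k-1)(k+3)=(k+1)^2$, closes the main induction. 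The resulting values $18,144,1200,10800,105840$ for $k=3,\dots,7$ match Table \ref{tab:Clpk3}. One small point you should make explicit: the step $c_{k,k+1}=0$ needs $\deg C_{3,k}^{\lpk}(t)\le k$, which is immediate by induction from the same recurrence (each application raises the degree by at most one, starting from $C_{3,1}^{\lpk}(t)=t$), or combinatorially from Proposition \ref{prop:characterizeclusters}, since each left peak of $\pi\in\P_{3,k}$ forces a descent whose two positions contain a letter congruent to $2$ modulo $3$, left peaks are at least two apart, and there are only $k$ such letters. Also worth noting: your auxiliary lemma is not special to $r=3$; for general $r$ the same extraction gives $c^{(r)}_{k+1,k+1}=\bigl((r-2)k+1\bigr)c^{(r)}_{k,k}$, so the identical two-line argument settles Conjecture \ref{cj-kClpk} in full, not only its $r=3$ case. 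What your route does not supply---and what the authors explicitly ask for---is a bijective explanation (e.g.\ a bijection with $\P_{r-2,k}$ for Conjecture \ref{cj-kClpk}, or a direct combinatorial reason for the factor $\binom{k}{2}$ here); you rightly flag that as the remaining open direction.
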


The numbers $k! \binom{k}{2}$ are given in OEIS sequence A001804 \cite{oeis}. 

Our remaining conjectures are about the polynomials $A_{\sigma,n}^{\ides}(t)$, $P_{\sigma,n}^{\ipk}(t)$, and $P_{\sigma,n}^{\ilpk}(t)$. The first of these conjectures captures the observation that the coefficients of $t^{n-1}$ in $A_{213,n}^{\ides}(t)$ seem to match OEIS sequence A001924 \cite{oeis}, which involves the Fibonacci numbers $\Fib_n$ defined by $\Fib_0=0$, $\Fib_1=1$, and $\Fib_n = \Fib_{n-1} + \Fib_{n-2}$ for $n\geq2$. 

\begin{conjecture}\label{cj-ides}
For all $n\geq 0$, the number of permutations in $\mathfrak{S}_{n}(213)$ whose inverse has exactly $n-2$ descents is equal to $\Fib_{n+3}-n-2$.
\end{conjecture}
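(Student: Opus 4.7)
The plan is to establish a direct bijection between $213$-avoiding permutations $\pi \in \mathfrak{S}_n$ with $\ides(\pi) = n-2$ and a family of subsets of $[n]$ that is easy to count via Fibonacci numbers. Since $\ides(\pi) = n-2$ means $\pi^{-1}$ has exactly one ascent, I can describe such a $\pi$ as follows: let $k$ denote the position of this unique ascent, let $A \subseteq [n]$ be the set of positions in $\pi$ occupied by the values $1, 2, \ldots, k$, and set $B \coloneqq [n] \setminus A$. Then $\pi$ is determined by $A$: the positions in $A$, read left to right, carry the values $k, k-1, \ldots, 1$, while the positions in $B$, read left to right, carry the values $n, n-1, \ldots, k+1$. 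The one-ascent condition translates to $\min(A) < \max(B)$, which rules out exactly the ``top'' sets $\{n-k+1, \ldots, n\}$ for $k = 1, \ldots, n-1$, along with $\emptyset$ and $[n]$.

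First I would analyze when a consecutive $213$ pattern arises in three positions $i, i+1, i+2$ of $\pi$. A short case analysis on which of the eight possible membership patterns $(A/B,\, A/B,\, A/B)$ occurs shows that a $213$ occurrence is created precisely in the configuration $i, i+1 \in A$ and $i+2 \in B$: here $\pi(i)$ and $\pi(i+1)$ are two consecutive small values in decreasing order while $\pi(i+2) \geq k+1$, so $\pi(i+1) < \pi(i) < \pi(i+2)$; every other membership pattern yields one of $123$, $132$, $231$, $312$, or $321$. Hence $\pi \in \mathfrak{S}_n(213)$ if and only if $A$ satisfies the \emph{run condition}: every maximal run of consecutive integers in $A$ either has length one or extends all the way up to $n$.

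Next I would count the subsets $A \subseteq [n]$ satisfying the run condition by splitting into two disjoint families. In family (a), $A$ contains no two consecutive integers; such subsets of $[n]$ are counted by $\Fib_{n+2}$. In family (b), $A$ has a maximal final run $\{m, m+1, \ldots, n\}$ of length at least two (so $1 \leq m \leq n-1$); then $m-1 \notin A$ and $A \cap [1, m-2]$ is an arbitrary subset with no two consecutive integers, giving $\Fib_m$ choices for each such $m$. Using the identity $\sum_{m=1}^{n-1}\Fib_m = \Fib_{n+1} - 1$, the total count of subsets satisfying the run condition is
\[ \Fib_{n+2} + \sum_{m=1}^{n-1}\Fib_m = \Fib_{n+2} + \Fib_{n+1} - 1 = \Fib_{n+3} - 1. \]

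Finally, the $n+1$ disallowed subsets ($\emptyset$, $[n]$, and the top sets $\{n-k+1, \ldots, n\}$ for $k = 1, \ldots, n-1$) each trivially satisfy the run condition, so subtracting them yields $\Fib_{n+3} - 1 - (n+1) = \Fib_{n+3} - n - 2$, as desired. The main obstacle is expected to be the case analysis identifying the $A, A, B$ configuration as the unique source of consecutive $213$ patterns; once that characterization is in hand, the remaining enumeration is a routine Fibonacci bookkeeping argument.
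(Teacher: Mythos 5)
Be aware that the paper does not prove this statement: it is posed as Conjecture \ref{cj-ides}, verified computationally for $n\leq 100$, with only suggested routes toward a proof (Motzkin paths with one ascent, or pattern-avoiding Fishburn permutations). Your argument is therefore not a variant of the paper's proof but a genuine proof of the open conjecture, and after checking it I believe it is correct and complete. The key points all hold: a permutation whose inverse has exactly one ascent, at position $k$, is exactly one in which the values $1,\dots,k$ occupy a set of positions $A$ in decreasing order and the values $k+1,\dots,n$ occupy the complementary positions in decreasing order, with the single inverse ascent occurring precisely when $\min(A)<\max(B)$, so the excluded sets are exactly $\emptyset$, $[n]$, and the suffixes $\{n-k+1,\dots,n\}$, i.e.\ $n+1$ sets in total; the eight-case analysis does single out $(A,A,B)$ as the only membership pattern giving a consecutive $213$ (minor nit: the other seven cases produce only $132$, $231$, $312$, $321$ --- the pattern $123$ you list never actually occurs, which is harmless); the resulting run condition on $A$ is right; and the Fibonacci bookkeeping $\Fib_{n+2}+\sum_{m=1}^{n-1}\Fib_m=\Fib_{n+3}-1$, minus the $n+1$ excluded sets (each of which satisfies the run condition), gives $\Fib_{n+3}-n-2$. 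I spot-checked the values against Table \ref{tab:ides213} for $n\leq 9$ and they agree. When you write this up, handle the degenerate cases $n\leq 1$ separately (there "$n-2$ descents" is vacuous and both sides are $0$), make the claim that distinct admissible sets $A$ give distinct permutations explicit, and state your Fibonacci convention, since the paper's ($\Fib_0=0$, $\Fib_1=1$) is what makes ``no two consecutive elements in $[n]$'' count to $\Fib_{n+2}$. It would also be worth remarking that your bijection recovers the Eulerian count $2^n-n-1$ of all permutations with one inverse ascent, and possibly comparing your subset family with the Motzkin-path and Fishburn objects the authors mention, since a direct correspondence there would answer their request for a bijective explanation.
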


Conjecture \ref{cj-ides} has been verified for all $n\leq 100$. The numbers $\Fib_{n+3}-n-2$ are known to count Motzkin paths of length $n$ with exactly one ascent \cite[Corollary 7]{Zhuang2018}, whereas adding one we obtain the number of Fishburn permutations of length $n$ avoiding the classical patterns 321 and 1423 \cite[Proposition 7.12]{Egge2022}. These suggest possibilities for bijective proofs of Conjecture \ref{cj-ides}.

For fixed $m$, the linear coefficients of $P_{2134\cdots m,n}^{\ipk}(t)$ appear as the $m$th column of OEIS triangle A172119 \cite{oeis}. This suggests our next conjecture, which we have verified for all $m\leq 50$ and $n\leq 70$.

\begin{conjecture} \label{cj-ipk}
For all $m\geq 3$ and $n\geq 1$, the number of permutations in $\mathfrak{S}_{n}(2134\cdots m)$ whose inverse has no peaks is equal to 
\[
\sum_{j=0}^{\left\lfloor \frac{n-1}{m-1}\right\rfloor }(-1)^{j}{n-(m-2)j-1 \choose n-(m-1)j-1}2^{n-(m-1)j-1}.
\]
\end{conjecture}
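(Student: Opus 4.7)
I would first translate the condition $\ipk(\pi) = 0$ into a concrete description of $\pi$ itself. Since $\ipk(\pi) = \pk(\pi^{-1})$, having $\ipk(\pi)=0$ means that $\pi^{-1}$ is a V-shape, i.e., first strictly decreasing and then strictly increasing, with its minimum value $1$ at some position $k$. Unwinding this back to $\pi$ gives the following explicit structure: $\pi_1 = k$, and the remaining entries $\pi_2,\ldots,\pi_n$ form a shuffle of the decreasing sequence $(k-1)(k-2)\cdots 1$ with the increasing sequence $(k+1)(k+2)\cdots n$. Encoding each such $\pi$ by the binary word $w = w_2 w_3 \cdots w_n \in \{A,B\}^{n-1}$ with $w_j = A$ when $\pi_j < k$ and $w_j = B$ when $\pi_j > k$ yields a bijection between the $\pi \in \S_n$ with $\ipk(\pi) = 0$ and all binary words of length $n-1$ (the value of $k$ is recovered as one plus the number of $A$'s).

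Next, I would determine which binary words correspond to $\pi$ avoiding the consecutive pattern $2134\cdots m$. A short case analysis on whether $\pi_i$ and $\pi_{i+1}$ are $A$-entries, $B$-entries, or equal to $k$ shows that the inequalities $\pi_{i+1} < \pi_i < \pi_{i+2} < \pi_{i+3} < \cdots < \pi_{i+m-1}$ force $\pi_{i+1} \in A$ and $\pi_{i+2},\ldots,\pi_{i+m-1} \in B$, while imposing no constraint on $\pi_i$; conversely, any such configuration automatically produces an occurrence of $2134\cdots m$, using the decreasing structure of $A$ and the increasing structure of $B$. Consequently, $\pi$ has an occurrence of $2134\cdots m$ starting at position $i$ if and only if $w_{i+1} w_{i+2} \cdots w_{i+m-1} = AB^{m-2}$, so $\pi$ avoids $2134\cdots m$ precisely when $w$ avoids the length-$(m-1)$ factor $AB^{m-2}$.

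The final step is to count binary words of length $n-1$ avoiding $AB^{m-2}$. Because $A \ne B$, two occurrences of $AB^{m-2}$ cannot share any letter, so the pattern is non-overlapping and a direct inclusion-exclusion applies: marking $j$ pairwise-disjoint occurrences consumes $j(m-1)$ positions into $j$ blocks and leaves $n-1-j(m-1)$ free positions, each of which is $A$ or $B$. The number of words with $j$ distinguished occurrences is therefore $\binom{n-1-(m-2)j}{j}\,2^{n-1-(m-1)j}$, and summing with signs gives
\[
\sum_{j \geq 0}(-1)^j \binom{n - 1 - (m-2)j}{j}\, 2^{n - 1 - (m-1)j}.
\]
Using the symmetry $\binom{n - 1 - (m-2)j}{j} = \binom{n - (m-2)j - 1}{n - (m-1)j - 1}$, valid whenever both lower indices are non-negative (which is exactly the range $0 \le j \le \lfloor (n-1)/(m-1) \rfloor$), this matches the conjectured formula.

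The main conceptual step is the V-shape characterization of permutations with $\ipk = 0$; once that reduction to binary words is set up, the rest is routine. The only real obstacle I anticipate is executing the case analysis in the second step with enough care to handle the boundary case $i=1$ (where $\pi_i = k$ is neither an $A$- nor a $B$-entry) and to align the indexing of $w$ with that of $\pi$ so that subword occurrences of $AB^{m-2}$ correspond bijectively to pattern occurrences of $2134\cdots m$.
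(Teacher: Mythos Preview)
Your proposal is correct, and in fact it goes beyond the paper: the statement is presented there as a \emph{conjecture}, supported only by computer verification for $m\le 50$ and $n\le 70$, with no proof given. So there is nothing to compare against---your argument would settle the conjecture.

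The reduction is sound at each step. The characterization of permutations with $\ipk(\pi)=0$ as those with $\pi_1=k$ and $\pi_2\cdots\pi_n$ a shuffle of $(k{-}1)\cdots 1$ with $(k{+}1)\cdots n$ is correct, and the bijection with binary words of length $n-1$ is clear. Your key observation---that for any two positions $j<j'$ (consecutive or not, including $j=1$) one has $\pi_j<\pi_{j'}$ if and only if $\pi_{j'}$ is a $B$-entry---is exactly what makes the translation to the factor $AB^{m-2}$ work uniformly, and it handles the boundary case $i=1$ automatically since $\pi_1=k$ obeys the same rule. The factor $AB^{m-2}$ is indeed non-self-overlapping (every proper prefix begins with $A$, every proper suffix with $B$), so the inclusion--exclusion count of binary words avoiding it is the standard ``blocks and free letters'' count, yielding $\sum_{j\ge 0}(-1)^j\binom{n-1-(m-2)j}{j}2^{\,n-1-(m-1)j}$, which matches the stated formula after the binomial symmetry.

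The one place worth tightening in a full write-up is the correspondence of index ranges: an occurrence of $2134\cdots m$ at position $i$ in $\pi$ corresponds to the factor $AB^{m-2}$ occupying positions $i{+}1,\dots,i{+}m{-}1$ of $w=w_2\cdots w_n$, and as $i$ ranges over $1,\dots,n{-}m{+}1$ this exactly sweeps all possible starting positions of a length-$(m{-}1)$ factor in $w$. Once that bookkeeping is spelled out, the argument is complete.
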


Our final conjecture extends \cite[Conjecture 6.1]{Zhuang2021} and concerns real-rootedness.

\begin{conjecture}
\label{cj-realroots} Let $\sigma$ be any of the following consecutive patterns:
\begin{itemize}
\item $12\cdots m$ or $m\cdots21$ where $m\geq3$;
\item $12\cdots(a-1)(a+1)a(a+2)(a+3)\cdots m$ where $m\geq5$ and $2\leq a\leq m-2$;
\item $2134\cdots m$ or $12\cdots (m-2)m(m-1)$ where $m\geq3$.
\end{itemize}
Then the polynomials $A_{\sigma,n}^{\ides}(t)$, $P_{\sigma,n}^{\ipk}(t)$,
and $P_{\sigma,n}^{\ilpk}(t)$ are real-rooted for all $n\geq2$.
\end{conjecture}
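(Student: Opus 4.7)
The plan is to extract recurrences for the families $A_{\sigma,n}^{\ides}(t)$, $P_{\sigma,n}^{\ipk}(t)$, and $P_{\sigma,n}^{\ilpk}(t)$ from the cluster-method identities evaluated at $s=0$, and then propagate the real-rootedness already established in Section~\ref{sec:realroots} for the cluster polynomials $C_{r,k}^{\des}$, $C_{r,k}^{\pk}$, $C_{r,k}^{\lpk}$ through these recurrences. For $\sigma = 2134\cdots m$, Theorems~\ref{t-231mides}--\ref{t-231milpk} with $s=0$ reduce the right-hand sides to Hadamard-product expressions whose only pattern-dependent input is a signed power series built from the corresponding cluster polynomials; analogous identities hold for $\sigma = 12\cdots(m-2)m(m-1)$ via Proposition~\ref{p-rc}, and for the monotone and remaining transpositional patterns via the formulas in \cite{Zhuang2021}. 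Thus, the strategy would be uniform across all three families of patterns in the conjecture.

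First I would unpack the $n$-fold Hadamard products to obtain, for each fixed polynomial family, a linear recurrence of the form $f_n(t) = a_n(t) f_{n-1}(t) + \sum_{j \leq n-1} c_{n,j}(t) f_j(t)$, where the coefficients $c_{n,j}(t)$ are governed by the cluster polynomials. The natural next step is to try to fold the tail into a derivative term so that the recurrence fits the hypotheses of the Liu--Wang criterion (Lemma~\ref{l-gss}), i.e., to reach a form $f_n = a_n(t) f_{n-1} + b_n(t) f_{n-1}'$ with $b_n(t) \leq 0$ for $t \leq 0$. A plausible route to this reduction is to use the differential relations for cluster polynomials supplied by Corollary~\ref{c-uniC} and Theorems~\ref{t-Crec}--\ref{t-Clrec} to rewrite the tail coefficients in terms of $C_{r,k}^{\bullet}$ and its derivative, matching them against derivatives of $f_{n-1}$. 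As an alternative, one can invoke the theory of \emph{compatible} (or interlacing) polynomial sequences and attempt to show by induction that every non-negative linear combination of consecutive $f_j$'s is real-rooted; here the stronger generalized Sturm property of the cluster polynomials, rather than just real-rootedness, would do the heavy lifting.

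The main obstacle I anticipate is that the cluster polynomials enter the recurrence with alternating signs $(-1)^k$ coming from the substitution $s=0$ in $(s-1)^k$. This signed structure obstructs any direct positivity-based argument (including Liu--Wang) and is precisely what makes the avoidance polynomials nontrivial, as opposed to manifestly non-negative combinations of cluster polynomials. A promising way to circumvent this is to reorganize the cluster-method generating function on the left-hand sides of Theorems~\ref{t-231mides}--\ref{t-231milpk} into a continued fraction or transfer-matrix form whose denominator polynomials are the $A_{\sigma,n}^{\ides}$ (or $P_{\sigma,n}^{\ipk}$, $P_{\sigma,n}^{\ilpk}$); if the continued-fraction coefficients turn out to have non-negative polynomial coefficients in $t$, classical Stieltjes-type results would yield real-rootedness of all approximants simultaneously. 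Failing a uniform argument, I would attack the three families of patterns in turn, beginning with the monotone patterns (where the ordinary generating functions in \cite{Zhuang2021} are explicit and the signs are less destructive), then the transpositional patterns $12\cdots(a-1)(a+1)a(a+2)(a+3)\cdots m$, and finally the $2134\cdots m$ family, for which the richer combinatorics of overlapping clusters is controlled by the explicit recurrences of Propositions~\ref{prop:recp} and~\ref{prop:recpl}.
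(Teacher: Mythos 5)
This statement is Conjecture~\ref{cj-realroots}, which the paper leaves open: the authors state explicitly that they verified it only computationally (for $m\leq 50$, $n\leq 70$) and that the Hadamard-product formulas of Section~\ref{sec:patterns} ``do not easily lead to recurrences for these polynomials,'' which is why they could not run an interlacing argument as in Section~\ref{sec:realroots}. So there is no proof in the paper to compare against, and your proposal is not a proof either; it is a plan whose decisive steps are left as hopes. The first concrete gap is your opening move: you assert that unpacking the $n$-fold Hadamard products yields a linear recurrence $f_n = a_n f_{n-1} + \sum_j c_{n,j} f_j$ with polynomial coefficients. Nothing in Theorems~\ref{t-231mides}--\ref{t-231milpk} supports this. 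The Hadamard product acts on the variable $t$ coefficientwise, while the expansion in $x$ (or $z$) is what indexes $n$; extracting the coefficient of $x^n$ from $\sum_n \bigl(\cdots\bigr)^{*\langle n\rangle}$ does not produce a finite-length linear recurrence in $n$ of the kind you describe, and the paper's remark quoted above is precisely the authors' acknowledgement of this difficulty. Your subsequent step---folding the tail into a single derivative term so that Lemma~\ref{l-gss} applies---is not carried out and there is no mechanism offered for why the tail coefficients should organize themselves as $b_n(t)f_{n-1}'(t)$ with the required sign condition.

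The second gap you identify yourself but do not close: the substitution $s=0$ introduces the factors $(-1)^k$, so the cluster polynomials enter with alternating signs, and every positivity-based tool you cite (Liu--Wang, Stieltjes-type continued fractions with nonnegative coefficients, compatible/interlacing families built from nonnegative combinations) is blocked at exactly this point. Proposing to ``reorganize'' the identities into a continued fraction whose coefficients ``turn out'' to be nonnegative is a conjecture layered on a conjecture; for these exponential-type generating functions with Hadamard products in $t$ there is no known J-fraction or transfer-matrix form, and you give none. In short, the proposal reproduces the authors' own motivation for believing the conjecture (the generalized Sturm property of $C^{\des}_{r,k}$, $C^{\pk}_{r,k}$, $C^{\lpk}_{r,k}$ from Section~\ref{sec:realroots}) but does not supply the missing bridge from cluster polynomials to the avoidance/occurrence polynomials $A^{\ides}_{\sigma,n}$, $P^{\ipk}_{\sigma,n}$, $P^{\ilpk}_{\sigma,n}$. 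The statement remains open, and this plan, as written, does not settle it.
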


We have verified Conjecture \ref{cj-realroots} for all $m\leq50$ and $n\leq70$. Our generating function formulas in Section~\ref{sec:patterns} (and those in \cite{Zhuang2021}) played a pivotal role in formulating and gathering evidence for this conjecture, as they have allowed us to efficiently compute the polynomials $A_{\sigma,n}^{\ides}(t)$, $P_{\sigma,n}^{\ipk}(t)$, and $P_{\sigma,n}^{\ilpk}(t)$ for the above patterns. On the other hand, these formulas do not easily lead to recurrences for these polynomials, so we are unable to use an interlacing roots argument like in Section~\ref{sec:realroots} to tackle real-rootedness.

\bigskip{}\bigskip{} \noindent \textbf{Acknowledgements.} We thank the anonymous referee for carefully reading our paper and providing thoughtful comments. SE was partially supported by Simons Collaboration Grant \#929653. YZ was partially supported by an AMS--Simons Travel Grant.

\bibliographystyle{plain}
\bibliography{bibliography}

\end{document}